\documentclass[12pt]{elsarticle}% 无预投稿水印
\usepackage{amsmath}
\usepackage{amssymb}
\usepackage{amsthm}
\usepackage{verbatim}
\usepackage{subfigure}
\usepackage{latexsym}
\usepackage{lmodern}
\usepackage{array}
\usepackage{graphicx}
\usepackage{float}
\usepackage{multirow}
\usepackage{listings}
\usepackage{color}
\usepackage{bm}
\usepackage{url}
\usepackage[english]{babel}
\usepackage{hyperref}
\usepackage{algorithm}
\usepackage{algorithmic}
\usepackage{multicol,balance}
\usepackage{times}
\usepackage{cases}
\usepackage{titlesec}
\usepackage[title]{appendix}
\usepackage{mdwlist}
\usepackage{makecell}
\usepackage{booktabs}
\usepackage{mathabx}
\usepackage{epstopdf}
\usepackage{framed}
\usepackage{tikz}
\allowdisplaybreaks[3]

\newcommand{\R}{\mathbb{R}}

\newcommand*\xbar[1]{%
	\hbox{%
		\vbox{%
			\hrule height 0.5pt % The actual bar
			\kern0.5ex%         % Distance between bar and symbol
			\hbox{%
				\kern-0.1em%      % Shortening on the left side
				\ensuremath{#1}%
				\kern-0.1em%      % Shortening on the right side
			}%
		}%
	}%
}

\newtheorem{lemma}{Lemma}[section]

\newtheorem{prop}{Proposition}[section]

\newtheorem{theorem}{Theorem}[section]

\newtheorem{definition}{Definition}[section]

\numberwithin{equation}{section}

\normalsize \makeatletter

\newcommand*\bbar[1]{%
	\hbox{%
		\vbox{%
			\hrule height 0.5pt % The actual bar
			\kern0.365ex%         % Distance between bar and symbol
			\hbox{%
				\kern-0.1em%      % Shortening on the left side
				\ensuremath{#1}%
				\kern-0.1em%      % Shortening on the right side
			}%
		}%
	}%
} 

\begin{document}

\begin{frontmatter}
\title
{
	Koch-Tataru theorem for 3D incompressible active nematic liquid crystals
%	\footnote{
%		The authors are partially supported by NSFC-11831003, NSFC-12031012 and the Institute of Modern Analysis-A Frontier Research Center of Shanghai.}
}
%%
%%
%\author[rvt1]{Quoc-Hung Nguyen}
%\ead{qhnguyen@amss.ac.cn}

\author[rvt2]{Fan Yang}
\ead{fanyang-m@ntu.edu.cn}

%\address[rvt1]{Academy of Mathematics and Systems Science, Chinese Academy of Sciences, Beijing, 100190, China}

\address[rvt2]{School of Mathematics and Statistics, Nantong University, Nantong, 226019, China}

\begin{abstract}
We investigate the incompressible hydrodynamic system of the active nematic liquid crystals in the Beris-Edwards framework. Although we focus on constant activity in this paper, the simplified system derived from it exhibits the potential to perform computations and transmit information in active soft materials \cite{defect-active}.
More precisely, by employing Kato's strategy for constructing mild solutions combined with the Banach contraction principle, we show the existence and uniqueness of the Koch-Tataru type solution in $\R^3$ for small initial data $(Q_0,u_0)\in L^\infty\times {\rm BMO}^{-1}$. This is the first well-posedness result for the system with initial data in critical space.
%Our results contribute to a deeper understanding of 

\end{abstract}
\begin{keyword}
	active liquid crystals\sep incompressible flows\sep Koch-Tataru solution\sep rough initial data
	\MSC[2020] 35A01\sep 35Q30\sep 35Q35\sep 76D03\sep 76Z05\sep 82D03
\end{keyword}

\end{frontmatter}
%
%\linenumbers
%\newpage

%\tableofcontents
%
%\newpage

\section{Introduction}
In physics, active matter describes natural or artificial systems that are out of thermodynamic equilibrium through the transduction of energy derived from an internal source or ambient medium.
Typical examples are spanning from macroscopic (e.g. shoals of fish and flocks of birds \cite{exp-fish}) to microscopic scales (e.g. actin filaments \cite{exp-actin}, bacteria \cite{exp-bacteral}, cytoskeletal filaments \cite{exp-cyto} and microtubule bundles \cite{exp-microtubules}).
In general, such systems consist of individual units, referred to as active particles or self-driven particles, which may interact both directly as well as through disturbances propagated via the surrounding environment.
The most interesting feature of active fluids is the emergence of collective motion, such as non-equilibrium phase transitions between novel dynamical phases, active turbulence and the associated motile topological defects.
Indeed, there is increasing evidence that such collective behaviour is important in developing new technologies, for instance, designing autonomous materials systems endowed with logic operations \cite{defect-active}, and developing optimized flocking model for real drones in confined spaces \cite{drone} and biohybrid microrobots for targeted delivery of therapeutics \cite{microrobot}.
Therefore, understanding the laws underlying the collective dynamics of active matter has important implications for natural systems across a wide range of length scales.

In the last decades, many theoretical and experimental models of active matter have been developed to study their collective behaviour.
Since active particles are usually elongated, and their self-propulsion direction is influenced by their own anisotropy instead of being dictated by an external field.
To capture this feature, a generic approach is to utilize the concept of orientation fields from liquid crystals \cite{model1,model2}. Although there are two basic types of orientational order for rod-like particles, we are more interested in a nematic order that described by a traceless symmetric tensor field (conversely, the case of active particles with polar order is characterized by a vector field $\bm{p}$; see \cite{polar-case}).
Moreover, this type of model for active matter is often called {\em active nematic liquid crystals}.
For more information and discussions, we refer the interested readers to \cite{blow2014,active-review,Giomi2011,Giomi2012,Giomi2013,Giomi2014,activematter,Ravnik2013} and the references therein.

More precisely,
we are concerned with the well-known dynamical model for the incompressible flow of active nematic fluids that proposed in \cite{active-servy,Giomi2012}, which is a system that couples a forced incompressible Navier-Stokes equation for the underlying fluid velocity field with equations of motion for the orientation field and the concentration of active particles.
That is
\begin{equation}\label{eqiacl}
	\begin{cases}
		\partial_t c + u\cdot\nabla c = \nabla\cdot \left[ \left( D_0 \mathbb{I}_3+ D_1Q \right)\nabla c + \alpha_1 c^2\nabla\cdot Q  \right],\\
		\partial_t Q + u\cdot\nabla Q  = S(\nabla u,Q) +\lambda |Q|D+\Gamma H[Q,c],\\
		\partial_t u + u\cdot\nabla u = \mu\Delta u-\nabla p + \nabla\cdot \left( \tau+\sigma \right),\\
		\nabla\cdot u = 0, \hspace{4.4cm} x\in \R^3,\quad t>0,
	\end{cases}
\end{equation}
where the scalar active concentration $c>0$ is considered to vary with time and space, reflecting the effect caused by activity and flow.
In addition, $u \in \R^3$ is the flow velocity and $p$ is the usual pressure.
Based on the Landau-de Gennes theory, the nematic tensor order parameter $Q$ is a traceless symmetric $3\times 3$ matrix, and describes the primary and secondary directions of nematic alignment along with variations in the degree of nematic order \cite{Q-tensor-book}.
For simplicity, we generally refer to it as "$Q$-tensor".
It is also important to note that, alongside the passive liquid crystals, $Q$ vanishes in the isotropic phase; whereas the uniaxial phase corresponds to that $Q$ has two equal non-zero eigenvalues, and the biaxial phase provides that $Q$ has three distinct eigenvalues.
Meanwhile, the positive constants $D_0$ and $D_1$ are both diffusion coefficients, $\mu>0$ represents the viscosity coefficient, $\Gamma^{-1}>0$ is related to the rotational viscosity, $\lambda\in\R$ is the nematic alignment parameter, $\alpha_1$ refers to a constant with dimensions of inverse time, and $\mathbb{I}_3$ denotes the $3\times 3$ identity matrix.
The molecular field $H[Q,c]$ is defined as
\[
H=-\frac{\delta \mathcal{F}}{\delta Q}+ \frac{1}{3}\mathbb{I}_3{\rm Tr}\left(\frac{\delta \mathcal{F}}{\delta Q}\right),
\]
which drives the system toward thermodynamic equilibrium with the Landau-de Gennes free energy $\mathcal{F}$ (see also \cite{Giomi2012,Q-tensor-book}). In three dimensions, this reads
\[
\mathcal{F}=\int \left( \dfrac{K}{2}|\nabla Q|^2  + \dfrac{K}{4}(c-c_\star) {\rm Tr}(Q^2) - \dfrac{b}{3} {\rm Tr}(Q^3) + \dfrac{K}{4}c| {\rm Tr}(Q^2) |^2 \right) dA,
\]
where $K>0$ is the elastic constant, $b\in\R$ is a material-dependent constant, and $c_\star$ refers to the critical concentration for the IN (isotropic-nematic) transition. Therefore, $H[Q,c]$ can be written as
\begin{equation}
	H[Q,c]:= K\Delta Q -\dfrac{K}{2}(c-c_\star) Q  + b\left[ Q^2-\dfrac{{\rm Tr}(Q^2)}{3}\mathbb{I}_3\right]- Kc Q{\rm Tr}(Q^2).
\end{equation}
Moreover, the remaining stress terms in \eqref{eqiacl} are given by
\begin{align}
	S(\nabla u, Q):=&\xi D\left( Q+\dfrac{1}{3}\mathbb{I}_3 \right) + \xi \left( Q+\dfrac{1}{3}\mathbb{I}_3 \right)D - 2\xi \left( Q+\dfrac{1}{3}\mathbb{I}_3 \right){\rm Tr}(Q\nabla u) + \Omega Q- Q\Omega,\\
	\tau:=&-\xi \left( Q+\dfrac{1}{3}\mathbb{I}_3 \right)H - \xi H\left( Q+\dfrac{1}{3}\mathbb{I}_3 \right) + 2\xi \left( Q+\dfrac{1}{3}\mathbb{I}_3 \right){\rm Tr}(QH) - K\nabla Q \odot\nabla Q,\label{eq1.4}\\
	\sigma:=&\underbrace{-\lambda |Q|H+ QH-HQ}_{\text{elastic stress}} + \underbrace{\alpha_2 c^2Q}_{\text{active stress}}=-\lambda |Q|H+ Q\Delta Q-\Delta QQ + \alpha_2 c^2Q.\label{eq1.5}
\end{align}
Here $D(u):=\frac{1}{2}\left( \nabla u + \nabla u^{T} \right)$ and $\Omega(u):=\frac{1}{2}\left( \nabla u - \nabla u^{T} \right)$ denote the symmetric and antisymmetric part of the
%stress tensor with $(\nabla u)_{\alpha\beta}=\partial_\beta u_\alpha$
deformation tensor $\nabla u$, respectively.
% tensor with $(\nabla u)_{\alpha\beta}=\partial_\beta u_\alpha$,
The notation $\nabla Q \odot\nabla Q$ in \eqref{eq1.4} stands for a symmetric $3\times 3$ matrix with its component $(\nabla Q \odot\nabla Q)_{\alpha\beta}$ given by $\partial_\beta Q_{\gamma\delta}\partial_\alpha Q_{\gamma\delta}$.
A partial Einstein summation convention, as is typical, is used throughout the paper.
The term $S(\nabla u,Q)$ describes the contribution of rigid-body rotation
and shear alignment, and the parameter $\xi$ describes the ratio of tumbling and aligning effects on the molecules exerted by the flow.
Note that, such as a shear flow, liquid-crystalline materials do not rotate like a rigid body. Hence, $Q$ also tends to align with the streamlines of the flow.
%In this regard
Additionally,
$\xi=0$ corresponds to the corotational case that describes the molecules only tumble in a shear flow but do not align.
On the other hand, 
%In principle,
the tensors $\tau$ and $\sigma$ appearing here basically
%correspond to the symmetric and antisymmetric contribution of the stress tensor, respectively, and
take forms similar to those in passive liquid crystals, but $\sigma$ involves a new nonlinear term on the interaction of active particles.
%have a more complicated nonlinear structure.
In addition, the active stress defined as in \eqref{eq1.5} is due to the force dipoles, where activity parameter $\alpha_2$ is positive for contractile, negative for extensile and zero in equilibrium (see also \cite{activeterm2,activeterm1}).

Indeed, we can see from the first equation of system \eqref{eqiacl} that there exist strongly nonlinear couplings between $c$ and $Q$, which consequently pose significant analytical challenges.
To the best of our knowledge, there exists limited literature concerning the analytical study of this model. Besides the work in \cite{active-limit}, there are only a few research papers available that focus on the existing results of weak and strong solutions and their uniqueness properties for some simplified models under additional parameter restrictions; see for instance \cite{weak-ALC,ALC-decay,weak-strong-3d}. However, even for a simplified version of this model has been discussed in \cite{defect-active}, and exhibit the potential to perform computations and transmit information in active soft materials (e.g. actin-, tubulin-, and cell-derived systems). Therefore, it is worth developing a rigorous mathematical description and analysis for active nematic liquid crystals.

In this paper, we still restrict ourselves to the case of constant active concentration, and assume that
\begin{equation}
	K=1,\quad \xi=\alpha_1=0,\quad	a=\dfrac{1}{2}(c-c_\star)\quad {\rm and} \quad \kappa=\alpha_2c^2.
\end{equation}
To summarize, we arrive at a dynamical system for active nematic liquid crystals as follows (see \cite{weak-ALC}):
\begin{equation}\label{eq1.1}
	\begin{cases}
		\partial_tQ+(u\cdot\nabla)Q + Q\Omega-\Omega Q-\lambda |Q|D=\Gamma H[Q];\\
		\partial_t u + (u\cdot\nabla)u +\nabla p-\mu \Delta u =-\nabla\cdot (\nabla Q \odot\nabla Q)+\nabla\cdot (Q\Delta Q-\Delta QQ)\\
		\hspace{5.3cm}-\lambda\nabla\cdot(|Q|H[Q])+\kappa\nabla\cdot Q;\\
		\nabla\cdot u = 0,\hspace{4cm} x\in \R^3,\quad t>0,
	\end{cases}
\end{equation}
with
\begin{equation}\label{eq1.8}
	H[Q]:= \Delta Q-aQ+b\left[Q^2-\dfrac{{\rm Tr}(Q^2)}{3}\mathbb{I}_3\right]-cQ{\rm Tr}(Q^2),
\end{equation}
where $\mu>0$, $\Gamma>0$, $c>0$, $\lambda$, $a$, $b$, $\kappa\in \R$, and system \eqref{eq1.1} is supplemented with initial data
\begin{equation}
	(Q,u)(t,x)\mid_{t=0}=(Q_0,u_0)(x) \quad \text{\rm for}\quad x\in\R^3.
\end{equation}
Moreover, it's worth noting that this simplified system has proven effective in numerically analyzing the topological defects of active nematic fluids, providing the prospect of carrying out calculations and conveying data using active soft materials \cite{defect-active}.
%note that this simplified system has been successfully used to study the topological defects of active nematic liquid crystals numerically 

\subsection{Known results for active nematic liquid crystals}
As is well known, the dynamical systems of active liquid crystals at continuum level (in which the active agents are characterized by a smooth density field rather than the individual particles) are constructed by selectively adding phenomenological terms that represent activity to their passive continuum model; see \cite{polar-case,Giomi2011,Giomi2012,Giomi2013,activematter}.
In the case of constant active concentration studied here, system \eqref{eqiacl} can be reduced to the well-studied Beris-Edwards model (or the so-called $Q$-tensor system) by making certain assumptions. Hence, we begin with a short review of some recent results for these passive systems as a reference.

In \cite{Qtensor12}, Paicu and Zarnescu studied the corotational case of incompressible $Q$-tensor system and showed the existence of global weak solutions in $\R^d$ $(d=2,3)$, as well as the higher global regularity and weak-strong uniqueness in dimension two.
The same authors in \cite{Qtensor11} also extended the mentioned results to the non-corotational setting, where parameter $\xi$ is sufficiently small. 
Indeed, one can remove this smallness assumption in two dimensional periodic setting \cite{2dwithoutxi}. Besides, the uniqueness criterion of weak solutions in dimension three was obtained in \cite{weak-Q-tensor} for the case of $\xi=0$. Very recently, the author of this paper considered the non-corotational case ($\xi\ne0$) and showed that weak-strong uniqueness is still true under a new criterion; see \cite{weak-strong-LCD-xi} for the details. Xiao \cite{Q-unique-3d} adopted the maximal regularities of Stokes and parabolic operators to obtain the existence and uniqueness of strong solutions of the corotational system in a smooth bounded domain $\mathcal{O}\subset\R^3$.  Concerning a general $\xi\in\R$, in \cite{xi-all}, the authors also proved the global well-posedness of the corresponding system, through the method of operator theory and quasilinear equations.
Moreover, 
Feireisl, Rocca, Schimperna and Zarnescu \cite{nonisothermal1,nonisothermal2} addressed a nonisothermal variant of Beris-Edwards model in the case of Ball-Majumdar potential \cite{potential}, and showed the existence of global-in-time weak solutions for the system with periodic boundary conditions.
For the compressible $Q$-tensor system, the existence and long time behavior of global weak solutions were later studied in \cite{Q-tensor-comp}.
Nevertheless, it should be pointed out that many important progresses have been achieved for the modified $Q$-tensor systems in terms of the free energy or stress tensor; see \cite{ables2014,ables2016,2dlowSobolev,A-Z2016-2d,Du-wang-20,Weak-t-regularity,Hang-Ding-15,LC-decay-22,LC-decay-19,Q-tensor-Torus} and references therein.

For the active nematic liquid crystals, the first attempt was made by Chen, Majumdar, Wang and Zhang \cite{weak-ALC}. They proved the existence of global weak solutions for system \eqref{eq1.1} in dimension two and three, along with the regular solution for dimension two. But the related result for weak-strong uniqueness was solely obtained in $\R^2$ for the same reasoning as its passive counterpart.
Recently, in \cite{weak-strong-3d}, the author and Li addressed the weak-strong uniqueness of system \eqref{eq1.1} in three dimensional whole space. They particularly considered the so-called Leray-Hopf type weak solution which admits an energy inequality, and obtained a uniqueness criterion similar to that of Serrin for the incompressible Navier-Stokes equations \cite{serrin1963}. However, due to the strongly nonlinear coupling seen in \eqref{eq1.1}, it is still a challenging problem to construct such a weak solution for initial data $(Q_0,u_0)\in H^1\times L_\sigma^2$.
%because of the strongly nonlinear coupling in \eqref{eq1.1}.
Moreover, the existence of regular solutions ($H^s$-framework) and long-time behavior for \eqref{eq1.1} in $\R^3$ were studied in \cite{ALC-decay}. The author and Yang first proved the stable mixing estimates for the $Q$-tensor, which reveals that the threshold of exponential decay of the orientational field $Q$ is determined by the constant activity and related to the Landau-de Gennes free energy.
%, at least in the case when activity $c>c_\star$, active nematic phase becomes isotropic with an activity-dependent exponential convergence rate..
A stochastic analysis of the active hydrodynamics was also shown in \cite{stochastic-alc}.
In addition, Lian and Zhang \cite{Lian-zhang20} investigated the global weak solutions for the inhomogeneous version of system \eqref{eq1.1} in a smooth bounded domain. As for the compressible version, the authors in \cite{weak-ALC-comp}
%made some parameter assumptions and 
proved the existence of global weak solutions of the corresponding initial-boundary value problems. Indeed, one should note that these studies are far from what we expected for the active system that proposed in \cite{Giomi2012}.
To the best of our knowledge, only one very recent paper was dedicated to the existence theory of system \eqref{eqiacl}.
Jiang, Tang and Wang \cite{active-limit} derived the strong well-posedness for active system \eqref{eqiacl} in $\mathbb{T}^d$ with $d=2,3$, and the convergence problem of zero activity limit. For additional mathematical results, we refer the reader to \cite{active-servy,Jiang-com-s} and references therein.

\subsection{Motivation at the critical regularity}
%In order to offer
To provide a comprehensive and self-contained introduction to our work, we first recall some related results of the following well-known incompressible Navier–Stokes equations:
\begin{equation}\label{eq-ns}
	\begin{cases}
		\partial_t u + (u\cdot\nabla)u -\nu \Delta u+\nabla p =0,\\
		\nabla\cdot u = 0,\qquad  (t,x)\in \R_+\times\R^3,
	\end{cases}
\end{equation}
with initial data $u_0$ belonging to some critical spaces.
In fact, it is easy to see that if $(u,p)(t,x)$ is a solution of incompressible Navier–Stokes equations, then $(u_\lambda,p_\lambda)(t,x)$ with the scaling property
\begin{equation}\label{eq1.11}
	\left(u_\lambda,p_\lambda\right)(t,x)=\left(\lambda u(\lambda^2 t,\lambda x),\lambda^2p(\lambda^2 t,\lambda x)\right)
\end{equation}
also solves the system \eqref{eq-ns} with initial data $u_\lambda(0,x)=\lambda u_0(\lambda x)$.
Hence, in the literature concerning the theory of Navier–Stokes equations, the function spaces which are invariant under the scaling \eqref{eq1.11} are called {\em critical spaces}. For instance, typical examples for local (or global) well-posedness which holds for small initial data can be outlined as follows:
\begin{equation}\label{scale}
	\dot{H}^{\frac{1}{2}}(\R^3) \hookrightarrow L^3(\R^3) \hookrightarrow \dot{B}_{p,\infty}^{-1+\frac{3}{p}}(\R^3) \hookrightarrow {\rm BMO}^{-1}(\R^3) \hookrightarrow \dot{B}_{\infty,\infty}^{-1}(\R^3),
\end{equation}
where $3<p<\infty$.

The first result was obtained by Fujita and Kato \cite{fujita-kato-64} in the sense of {\em mild solution}. Based on the theory of semi-groups of operators, they turned the system \eqref{eq-ns} into the study of an integral equation, and proved the global well-posedness for small initial data in $\dot{H}^{\frac{1}{2}}$ by the Banach contraction principle. Two decades later, the study of the Navier–Stokes equations in critical space $L^3$ was continued by Kato in \cite{kato1984}; see also \cite{giga1986}. After that, the existence of global solutions from small data in $\dot{B}_{p,\infty}^{-1+\frac{3}{p}}$ was proved by Cannone and Planchon \cite{CannonePhD,Planchon98}. In $2001$, Koch and Tataru \cite{Koch-Tataru} addressed the global well-posedness of the Navier–Stokes equations with small initial data in ${\rm BMO}^{-1}$, where the constructed solutions also lie in the class $L_{loc}^2$.
In particular, they employed Kato's strategy in constructing a mild solution, the linear part in the mild formulation of \eqref{eq-ns} must enjoy the same property, i.e., $\|e^{t\Delta}u_0\|_{L_{loc}^2}<+\infty$.
This together with the parabolic scaling as defined above
%an important insight is that $u\in L_{loc}^2$, even in the sense of distributions.
% the mild formulation of \eqref{eq-ns} implies that the linear part $\|e^{t\Delta}u_0\|_{L_{loc}^2}$ must be finite, which
specifically provided that $u_0\in {\rm BMO}^{-1}$, in the sense of caloric extension.
%(i.e. convolution with the heat kernel)
%; see \eqref{def-bmo-1} below.
%Hereafter, the solution constructed similar to Koch and Tataru \cite{Koch-Tataru} is referred to as {\rm Koch-Tataru solution}.

Up to now, it seems natural to argue that ${\rm BMO}^{-1}$ is the largest critical space among those listed in \eqref{scale} where positive results are known.
Indeed, Bourgain and Pavlovi\'{c} \cite{bourgain07} proved that the Cauchy problem for the incompressible Navier–Stokes equations is ill-posed in $\dot{B}_{\infty,\infty}^{-1}$ in the sense of {\em norm inflation}. Namely, the solution of \eqref{eq-ns} evolving from any small data in $\dot{B}_{\infty,\infty}^{-1}$ can become arbitrarily large after a short time.
On the other hand, Germain \cite{Germain08} proved that the solution map of Navier–Stokes equations is not $C^2$ in $\dot{B}_{\infty,q}^{-1}$ for $q>2$. The ill-posed problem in this class was later investigated by Yoneda in \cite{Yoneda-2010}. After that, in a very interesting paper \cite{ill-wang-2015}, Wang showed the solution of \eqref{eq-ns} is also instable in $\dot{B}_{\infty,q}^{-1}$ ($1\leq q\leq 2$) by showing norm inflation phenomena. Very recently, Coiculescu and Palasek \cite{nonuniqueness-BMO}
further addressed the nonuniqueness of smooth solutions for the Navier–Stokes equations with initial data in ${\rm BMO}^{-1}$ via the convex integration technique. Their result implies the sharpness of the Koch-Tataru theorem \cite{Koch-Tataru}; in other words, smallness is necessary for global regularity.

%Since active nematic fluids can exhibit a long-range orientational ordering,
Moreover, we turn to review some existing results for the dynamical vector models of nematic liquid crystals that stem from Ericksen-Leslie theory. Due to numerous progress in this direction, we do not intend to cover all topics but concentrate on the well-posedness theory of the simplified Ericksen-Leslie system, as proposed by Lin and Liu \cite{Lin-Liu-1,Lin-Liu-2}, particularly with rough initial data. That is
\begin{equation}\label{eq1.13}
	\begin{cases}
		\partial_tu+(u\cdot\nabla)u -\Delta u + \nabla p =-\nabla\cdot \left( \nabla d\otimes \nabla d \right);\\
		\partial_t d + (u\cdot\nabla)d =\Delta d + |\nabla d|^2d\\
		\nabla\cdot u = 0,\\
		(u,d)(t,x)\mid_{t=0}=(u_0,d_0)(x), \quad |d_0(x)|=1,
	\end{cases}
\end{equation}
where $(t,x)\in \R_+\times \R^n$, $u$ is again the velocity of flow, $d(t,\cdot): \R^n\rightarrow S^2$, the unit sphere in $\R^3$, is a unit-vector field that represents the orientation of molecules in nematic liquid crystals, and $p$ is the pressure function. Similarly, $\nabla d\otimes \nabla d$ is a $3\times 3$ matrix whose $(\alpha,\beta)$-component is given by $\partial_\alpha d \cdot \partial_\beta d$. Note that the above system is also invariant under the following parabolic scaling:
\[
\left(u_\lambda,d_\lambda,p_\lambda\right)(t,x)=\left(\lambda u(\lambda^2 t,\lambda x),d(\lambda^2 t,\lambda x),\lambda^2p(\lambda^2 t,\lambda x)\right).
\]
Motivated by the seminal work \cite{Koch-Tataru}, Wang \cite{BMO-2014} addressed the existence of Koch-and-Tataru-type solution to system \eqref{eq1.13} in $\R^n$ with rough data $(u_0,d_0)\in {\rm BMO}^{-1} \times {\rm BMO}$.
It was later shown by Du and Wang in \cite{BMO-regularity,Du-BMO-2014} that this solution has arbitrary space-time regularity.
%The space-time regularity of this solution was later shown by Du and Wang in \cite{BMO-regularity,Du-BMO-2014}.
As for the $Q$-tensor based models of nematic liquid crystals, however, it is still unknown whether a similar result holds.

In this paper, we will focus on the well-posed problem for active system \eqref{eq1.1} with rough initial data.
Observe that system \eqref{eq1.1} is invariant by the scaling
\begin{equation}
	\left(Q_\lambda,u_\lambda,p_\lambda\right)(t,x)=\left( Q(\lambda^2 t,\lambda x),\lambda u(\lambda^2 t,\lambda x),\lambda^2p(\lambda^2 t,\lambda x)\right).
\end{equation}
%in the critical space, i.e.,
This motivates us to seek a solution evolved from the initial data $(Q_0, u_0)\in L^\infty\times {\rm BMO}^{-1}$.
Let us note that the consideration of functional space for $Q_0$ is related to our forthcoming work where we are interested in whether
%topological defects will blow up in finite time
finite-time blowup of \eqref{eq1.1} comes from topological defects. Indeed, the authors in \cite{defect-active} utilized the topological defects in active matter to perform logic operations. Hence, these series of works may give an insight into the evolution of topological defects after formation.

%vortex rings and vortex filaments for the initial data.

\subsection{Main result}

In order ro state the main results, we briefly recall some functional spaces used throughout this paper. Let $\alpha\in (0,1)$, a function $f$ on $\R^n$ is said to be H\"{o}lder continuous of order $\alpha$ (simply written as $\dot C^\alpha$), if
\[
\sup_{x\ne y}\frac{|f(x)-f(y)|}{|x-y|^\alpha}< +\infty.
\]
Then, we define the H\"{o}lder semi-norm as
\[
\|f\|_{\dot{C}^\alpha}= \sup_{x\ne y}\frac{|f(x)-f(y)|}{|x-y|^\alpha}.
\]
Similar to \cite{Koch-Tataru,BMO-2014}, we define a function belongs to ${\rm BMO}$ by its caloric extension, which is equivalent to the standard definition (see Stein \cite{Stein-book}). More precisely, let $w$ be the solution of linear heat equation
\[
\partial_tw-\Delta w=0,
\]
with initial data $f$. Then, $w$ can be expressed by
\[
w=\Phi(t,x)\ast f,
\]
where $\Phi(t,x)=(4\pi t)^{-\frac{n}{2}}e^{-\frac{|x|^2}{4}}$ is the fundamental solution of heat equation. In what follows, we write $w$ in terms of the heat semigroup $e^{t\Delta}$ for simplicity ($w=e^{t\Delta}f$).

\begin{definition}
	For $0<R\leq +\infty$, we say that the function $f\in L_{loc}^1(\R^n)$ is in ${\rm BMO}$ if the semi-norm $[f]_{\rm BMO}$ is finite, i.e.,
	\begin{equation}
		[f]_{\rm BMO}:=\sup_{x_0,R}\left(R^{-\frac{n}{2}}\int_0^R\int_{B(x_0,\sqrt{R})}|\nabla w|^2 dy dt\right)^{\frac{1}{2}}<+\infty.
	\end{equation}
	Similarly, we say $f$ is in ${\rm BMO}^{-1}$ if the norm $\|f\|_{{\rm BMO}^{-1}}$ is finite, i.e.,
	\begin{equation}
		\|f\|_{{\rm BMO}^{-1}}:=\sup_{x_0,R}\left(R^{-\frac{n}{2}}\int_0^R\int_{B(x_0,\sqrt{R})}|\ w|^2 dy dt\right)^{\frac{1}{2}}<+\infty.
	\end{equation}
\end{definition}
Since the divergence of a vector field with components in ${\rm BMO}$ is obviously in
${\rm BMO}^{-1}$. One can asserts that $f\in L_{loc}^1$ is in ${\rm BMO}^{-1}$ if and only if there exist $f_i\in {\rm BMO}$ ($1\leq i\leq n$) with $f=\sum_{i=1}^{n}\partial_i f_i$; see \cite[Theorem~1]{Koch-Tataru} for details.

Notice that system \eqref{eq1.1} has more complicated nonlinear structure than \eqref{eq1.13}. Thus, we introduce the following new functional space to handle the orientation field $Q$ in our case.
\begin{definition}
	Let $0<T\leq+\infty$ and $f$ be a function defined on $[0,T)\times\R^3$, we say that $f\in \mathbb{X}_T$ if and only if
		\begin{equation}
			\begin{split}
				\|f\|_{\mathbb{X}_T}=&\sup_{t\in[0,T]}\| f(t)\|_{L^\infty}+t^{\frac{1}{4}}||f(t)||_{\dot C^{1/2}}+ t^{1/2}	\|\nabla f(t)\|_{L^\infty}\\
				&+ \sup_{x_0\in\R^3}\sup_{0<R\leq T}\left(R^{-3/2}\int_0^R\int_{B(x_0,\sqrt{R})}|\nabla f(y,t)|^2 dy dt\right)^{\frac{1}{2}}<+\infty
			\end{split}
		\end{equation}
\end{definition}
In comparison with the choice of norm for vector theory presented by Wang \cite{BMO-2014}, it is essential to include $t^{\frac{1}{4}}||f(t)||_{\dot C^{1/2}}$ to derive the estimates of certain nonlinear terms. However, the functional space for the velocity field $u$ is the same as that in \cite{BMO-2014}. Therefore,
%inspired by Koch-Tataru \cite{Koch-Tataru} and Wang \cite{BMO-2014}, the functional space for velocity $u$ can be defined as follows:
\begin{definition}
	Let $0<T\leq+\infty$ and $g$ be a function defined on $[0,T)\times\R^3$, we say that $g\in \mathbb{Y}_T$ if and only if
		\begin{equation}
			\|g\|_{\mathbb{Y}_T}=\sup_{t\in[0,T]} t^{1/2}\|g(t)\|_{L^\infty} +
			\sup_{x_0\in\R^3}\sup_{0<R\leq T}\left(R^{-3/2}\int_0^R\int_{B(x_0,\sqrt{R})}|g(y,t)|^2 dy dt\right)^{\frac{1}{2}}<+\infty.
		\end{equation}
\end{definition}
%the definition of ${\rm BMO}$ space.
%\begin{definition}
%	For $0<R\leq +\infty$, a function 
%\end{definition}
%Moreover, we denote $A\lesssim B$ if there exists a universal constant $C$ such that $A\leq CB$.
It is obvious that $\mathbb{X}_T$ and $\mathbb{Y}_T$ are both Banach spaces. Then our main result in this paper is:
\begin{theorem}\label{thm1.1}
	There exists a small positive constant $\epsilon_0$ such that if the initial data $(Q_0, u_0)\in L^\infty\times {\rm BMO}^{-1}$ satisfying 
	\begin{equation}
		\|Q_0\|_{L^\infty}+ \|u_0\|_{{\rm BMO}^{-1}}\leq \epsilon_0,
	\end{equation}
	then the system \eqref{eq1.1} admits a unique local solution  $(Q,u)\in \mathbb{X}_{T}\times\mathbb{Y}_{T}$ in $[0,T]$ satisfying
	\begin{equation}
		\|Q\|_{\mathbb{X}_{T}} + \|u\|_{\mathbb{Y}_{T}} \leq C\epsilon_0,
	\end{equation}
	where $C$ is a constant depending only on $|a|$, $b$, $c$, $\mu$, $\kappa$, $\lambda$ and $\Gamma$.
\end{theorem}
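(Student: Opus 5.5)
We follow Kato's strategy, as in Koch--Tataru \cite{Koch-Tataru} and Wang \cite{BMO-2014}. First we rewrite \eqref{eq1.1} in mild form. Let $\mathbb{P}$ denote the Leray projector. Then
\[
Q(t)=e^{\Gamma t\Delta}Q_0+\int_0^t e^{\Gamma(t-s)\Delta}\Big(-u\cdot\nabla Q-Q\Omega+\Omega Q+\lambda|Q|D+\Gamma\big(-aQ+b(Q^2-\tfrac13{\rm Tr}(Q^2)\mathbb{I}_3)-cQ{\rm Tr}(Q^2)\big)\Big)ds,
\]
\[
u(t)=e^{\mu t\Delta}u_0+\int_0^t e^{\mu(t-s)\Delta}\mathbb{P}\nabla\cdot\Big(-u\otimes u-\nabla Q\odot\nabla Q+Q\Delta Q-\Delta QQ-\lambda|Q|H[Q]+\kappa Q\Big)ds.
\]
We denote the right-hand side by $\mathcal{T}(Q,u)$. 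The goal is to show that $\mathcal{T}$ maps a small closed ball of $\mathbb{X}_T\times\mathbb{Y}_T$, of radius $C\epsilon_0$, into itself and is a contraction there. Existence and uniqueness in that ball then follow from the Banach fixed point theorem.

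\textbf{Linear estimates.} We will show
\[
\|e^{\Gamma t\Delta}Q_0\|_{\mathbb{X}_T}\lesssim\|Q_0\|_{L^\infty},\qquad \|e^{\mu t\Delta}u_0\|_{\mathbb{Y}_T}\lesssim\|u_0\|_{{\rm BMO}^{-1}}.
\]
The $\mathbb{Y}$ bound is exactly Koch--Tataru's characterization of ${\rm BMO}^{-1}$ by caloric extension, together with the estimate $t^{1/2}\|e^{t\Delta}u_0\|_{L^\infty}\lesssim\|u_0\|_{{\rm BMO}^{-1}}$. For $Q_0\in L^\infty$, the bounds
\[
t^{1/2}\|\nabla e^{t\Delta}Q_0\|_{L^\infty}\lesssim\|Q_0\|_{L^\infty},\qquad t^{1/4}\|e^{t\Delta}Q_0\|_{\dot C^{1/2}}\lesssim\|Q_0\|_{L^\infty}
\]
follow from heat kernel bounds and interpolation. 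The Carleson term is bounded because $L^\infty\subset{\rm BMO}$.

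\textbf{Time-local terms.} The zeroth-order terms $-aQ+bQ^2-cQ{\rm Tr}Q^2$ and $\kappa Q$ are not scale invariant. This is why the result is only local: on $[0,T]$ they contribute a factor $T$ or $T^{1/2}$. We will therefore choose $T$ depending on $|a|,b,c,\kappa,\mu,\Gamma$, for instance $T=1$ after rescaling. The point is that $\int_0^t\|e^{(t-s)\Delta}f\|ds$ gives
\[
\|\cdot\|_{L^\infty}\lesssim t\,\|f\|_{L^\infty},\qquad t^{1/2}\|\nabla\cdot\|_{L^\infty}\lesssim t\,\|f\|_{L^\infty},
\]
and these are bounded for $t\le T$.

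\textbf{Nonlinear estimates.} These are modeled on Koch--Tataru's bilinear lemma $\|\int_0^te^{(t-s)\Delta}\mathbb{P}\nabla\cdot(fg)\,ds\|_{\mathbb{Y}_T}\lesssim\|f\|_{\mathbb{Y}_T}\|g\|_{\mathbb{Y}_T}$, which we apply to $u\otimes u$ and to $\nabla Q\odot\nabla Q$ (note $\nabla Q$ has the $\mathbb{Y}$-structure). The $Q$-equation sources $u\cdot\nabla Q$, $\Omega Q$ and $|Q|D$ are estimated using Wang's approach: split the time integral into $[0,t/2]$ and $[t/2,t]$, and use the Carleson bounds on $u$ and $\nabla Q$ for the $L^\infty$ and gradient parts. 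For $Q\nabla u$, we move the derivative onto $Q$ or onto the kernel as a divergence, $\nabla\cdot(Qu)-(\nabla Q)u$.

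\textbf{Main obstacle.} The hardest terms are the second-order stress terms $\nabla\cdot(Q\Delta Q-\Delta QQ)$ and $\lambda\nabla\cdot(|Q|\Delta Q)$, which involve $\Delta Q$, a quantity not controlled by $\mathbb{X}_T$. For the commutator we write
\[
Q\Delta Q-\Delta QQ=\partial_k(Q\partial_kQ-\partial_kQ\,Q),
\]
so that we get $\nabla^2$ acting on a product $Q\nabla Q$. This is handled like the $\nabla Q\odot\nabla Q$ term, with one extra derivative on the kernel. The resulting singularity $(t-s)^{-1}$ must be compensated by Hölder regularity: we replace $Q(s,y)$ by $Q(s,y)-Q(s,x)$ and use $|Q(s,y)-Q(s,x)|\le s^{-1/4}\|Q\|_{\mathbb{X}}|x-y|^{1/2}$. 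This gains $(t-s)^{1/4}$ and gives an integrable kernel. This is precisely why the $t^{1/4}\|\cdot\|_{\dot C^{1/2}}$ seminorm was added to $\mathbb{X}_T$.

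For $|Q|\Delta Q$ we write
\[
|Q|\Delta Q=\partial_k(|Q|\partial_kQ)-\partial_k|Q|\,\partial_kQ,
\]
and use that $|Q|$ is Lipschitz in $Q$, so the same Hölder trick applies. Lower-order parts of $|Q|H$ are handled as in the time-local step.

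\textbf{Conclusion.} Collecting the estimates gives
\[
\|\mathcal{T}(Q,u)\|\le C\epsilon_0+C(\|Q\|_{\mathbb{X}_T}+\|u\|_{\mathbb{Y}_T})^2(1+\cdots),
\]
together with the analogous difference estimate. The difference estimate is delicate for $|Q|$, which is only Lipschitz; we will use $\big||Q_1|-|Q_2|\big|\le|Q_1-Q_2|$ for the undifferentiated factor. Taking $\epsilon_0$ small then closes the contraction argument.
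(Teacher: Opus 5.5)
\textbf{There is a genuine gap, in the step you identify as the crux.} Your plan is, step for step, the paper's proof: mild formulation, a ball around $(Q_L,u_L)$, the split $[0,t/2]\cup[t/2,t]$, the divergence identities for $Q\Delta Q-\Delta Q\,Q$ and $|Q|\Delta Q$, and the seminorm $t^{1/4}\|Q\|_{\dot C^{1/2}}$ to absorb the second-derivative kernel. You are also right to let $T$ depend on the coefficients: the linear terms $-a\Gamma Q$ and $\kappa Q$ cost $CT\|Q\|_{\mathbb{X}_T}$, which small $\epsilon_0$ alone does not absorb. The paper's version of the crux step ($I_{12}$, $J_{22}$ and their counterparts in Lemma~\ref{lem3.2}) fails for the same reason yours does.

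The cancellation actually available is $\int K(x-y,t-s)\,dy=0$ for the kernel $K$ of $\nabla^2\mathbb{P}e^{(t-s)\Delta}$. It lets you subtract the value at $x$ of the \emph{whole} product $Q\partial_kQ-\partial_kQ\,Q$, and that requires a modulus of continuity for $\nabla Q$. Subtracting only $Q(s,x)$ leaves $\nabla^2\mathbb{P}e^{(t-s)\Delta}$ applied to $Q(s,x)\partial_kQ(s,\cdot)-\partial_kQ(s,\cdot)\,Q(s,x)$. This still costs $(t-s)^{-1}\|\nabla Q(s)\|_{L^\infty}\sim(t-s)^{-1}s^{-1/2}$, which is not integrable at $s=t$, and nothing in $\mathbb{X}_T$ controls a H\"older norm of $\nabla Q$. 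On the $Q$-side it is worse. For $t^{1/2}\|\nabla Q\|_{L^\infty}$ with the sources $Q\Omega-\Omega Q$ and $\lambda|Q|D$, your rewriting $Q\nabla u=\nabla\cdot(Qu)-(\nabla Q)u$ puts $\nabla^2e^{(t-s)\Delta}$ on $Qu$, and $u\in\mathbb{Y}_T$ carries no H\"older control at all.

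This cannot be repaired by sharper bookkeeping in these spaces. Linearized at a constant $Q^*$, the terms are $[Q^*,\Omega(u)]$ and $\nabla\cdot[Q^*,\Delta Q]$, of the same order as the principal parts, so you would need $L^\infty$ maximal regularity, which fails. Here is a concrete counterexample.
\begin{itemize}
\item[(1)] Take $\lambda=0$, $u_0=\tilde u=0$, $Q_0=Q^*=\epsilon\,\mathrm{diag}(1,-1,0)$ and $M=e_1\otimes e_2+e_2\otimes e_1$.
\item[(2)] Set $\tilde Q=Q^*+\epsilon\phi(s,x_1)M$ with $\phi=t^{-1/2}\sum_{k=k_0}^{k_0+N}\psi_k(s)2^{-k}\sin(2^kx_1)$, where $\psi_k$ is a bump supported in $[t-4^{-k},t-4^{-k-1}]$ and $4^{-k_0}\le t/2$. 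Then $\|\tilde Q-Q_L\|_{\mathbb{X}_T}\lesssim\epsilon$ uniformly in $N$.
\item[(3)] Because $[Q^*,M]$ is antisymmetric, $\nabla\cdot(\tilde Q\Delta\tilde Q-\Delta\tilde Q\,\tilde Q)=\nabla\cdot[Q^*,\Delta\tilde Q]$ is a divergence-free shear, so $\mathbb{P}$ acts on it as the identity. Meanwhile $\nabla\cdot(\nabla\tilde Q\odot\nabla\tilde Q)$ is a gradient and is removed by $\mathbb{P}$.
\item[(4)] Each dyadic block then contributes about $\epsilon^2t^{-1/2}$ at $x=0$, all with the same sign. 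So the $u$-component of $\mathcal{T}(\tilde Q,0)$ satisfies $t^{1/2}|u(t,0)|\gtrsim\epsilon^2N-C$.
\end{itemize}
Thus, for any $\epsilon_0$ and $T$, $\mathcal{T}$ does not map the ball into itself. A shear $\tilde u=(0,g(s,x_1),0)$ breaks the $t^{1/2}\|\nabla Q\|_{L^\infty}$ bound in the same way.

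Closing a contraction would need stronger norms, for example time-weighted H\"older (or $\dot B^{\alpha}_{\infty,\infty}$) norms on $\nabla Q$ and $u$, where parabolic Schauder estimates supply the missing maximal regularity. That changes the spaces in the statement, and with them the uniqueness class.

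Separately, your treatment of $|Q|$ in the difference estimate covers only the undifferentiated factor. The divergence form also produces $\nabla(|Q^1|-|Q^2|)$. This is not controlled by $Q^1-Q^2$ and its gradient near $\{Q=0\}$, because $Q\mapsto Q/|Q|$ is not Lipschitz there.
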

Broadly speaking, our method is based on Kato's method for constructing mild solutions from the initial data. To be more specific, we convert the differential equations into integral equations via the heat kernel. On this basis, we build an iteration scheme and show the desired result by Banach fixed-point theorem.
Compared with the earlier works $\cite{Koch-Tataru,BMO-2014}$ for different fluid models, in our case, system \eqref{eq1.1} involves more complicated interaction between $Q$ and $u$ which further complicates the analysis.
Moreover, we mention our recent work \cite{ALC-decay}, where by making use of the kernel of linearized system of \eqref{eq1.1} at equilibrium state $(0,0)$, the decay properties for classical solutions of \eqref{eq1.1} have been addressed by a different integral formula of solution.

%\subsection{Plan of this paper}
The rest of this paper is organized as follows.
In Section~\ref{sec-2}, we review basic estimates of the heat kernel and reformulate the linear problem in our setting.
Thereafter, we prove Theorem~\ref{thm1.1} in Section~\ref{sec-3}.

\section{Preliminaries}\label{sec-2}
In this section, we recall some of the notations in this paper. For brevity, we denote the letter $C$ to represent generic positive constants that may vary throughout the proofs.
Moreover, we also denote $A\lesssim B$ if there exists a universal constant $C$ such that $A\leq CB$. Furthermore, we also use the following convention: we say that a pair $(Q,u)\in \mathbb{E}_T:=\mathbb{X}_T \times \mathbb{Y}_T$ if and only if $\|Q\|_{\mathbb{X}_T}+ \|u\|_{\mathbb{Y}_T}<+\infty.$

Let $\mathbb{P}$ be the well-known Leray projector, we then have the following pointwise bound for the Oseen kernel $\mathbb{K}(x,t):= \nabla^{k+1}\mathbb{P}e^{t\Delta}$ (see \cite[Lemma~2.1]{Du-BMO-2014} and references therein).
\begin{lemma}
	Let $n$ be the dimension of space, then it holds that
	\begin{equation}
		\mathbb{K}(x,t)\leq C(k)\dfrac{1}{(\sqrt{t}+ |x|)^{n+k+1}},
	\end{equation}
	where $C$ is a positive constant depending on $k$. 
\end{lemma}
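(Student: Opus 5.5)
The plan is to reduce everything to explicit heat-kernel integrals by splitting the Leray projector into the identity and a Riesz-transform part. Recall that $\mathbb{P}=\mathbb{I}+\nabla(-\Delta)^{-1}\nabla\cdot$, i.e. $\mathbb{P}_{ij}=\delta_{ij}+\partial_i\partial_j(-\Delta)^{-1}$. Then
\[
\nabla^{k+1}\mathbb{P}e^{t\Delta}=\nabla^{k+1}e^{t\Delta}\,\delta_{ij}+\nabla^{k+1}\partial_i\partial_j(-\Delta)^{-1}e^{t\Delta}.
\]
I will bound the kernel of each piece separately by $C(k)(\sqrt t+|x|)^{-(n+k+1)}$.

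\textbf{The identity part.} Its kernel is $\nabla^{k+1}\Phi(t,x)$. Differentiating the Gaussian gives
\[
|\nabla^{k+1}\Phi(t,x)|\le C(k)\,t^{-\frac{n+k+1}{2}}e^{-c|x|^2/t}.
\]
Since $e^{-c|x|^2/t}\le C(m)\,(1+|x|/\sqrt t)^{-m}$ for every $m$, taking $m=n+k+1$ gives the claimed bound immediately.

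\textbf{The nonlocal part.} Here I use the semigroup identity $(-\Delta)^{-1}e^{t\Delta}=\int_t^\infty e^{s\Delta}\,ds$, valid on Schwartz functions with mean zero after the derivatives $\partial_i\partial_j$ are applied. Hence the kernel is
\[
\int_t^\infty \nabla^{k+1}\partial_i\partial_j\Phi(s,x)\,ds,
\]
which the Gaussian derivative bound controls by
\[
C(k)\int_t^\infty s^{-\frac{n+k+3}{2}}e^{-c|x|^2/s}\,ds .
\]
I then split into two regions.
\begin{itemize}
\item If $|x|\le\sqrt t$, drop the exponential. The integral is then at most $C\,t^{-\frac{n+k+1}{2}}$, which is comparable to $(\sqrt t+|x|)^{-(n+k+1)}$.
\item If $|x|>\sqrt t$, extend the integral to $(0,\infty)$ and substitute $s=|x|^2\sigma$. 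This gives $C|x|^{-(n+k+1)}\int_0^\infty\sigma^{-\frac{n+k+3}{2}}e^{-c/\sigma}\,d\sigma$. The $\sigma$-integral is finite, since the exponent $\frac{n+k+3}{2}>1$ handles $\sigma\to\infty$ and $e^{-c/\sigma}$ handles $\sigma\to0$.
\end{itemize}
Combining the two regions yields the bound $C(k)(\sqrt t+|x|)^{-(n+k+1)}$ for this piece as well.

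\textbf{Main obstacle.} The only delicate point is justifying the kernel representation of $\partial_i\partial_j(-\Delta)^{-1}e^{t\Delta}$, since $(-\Delta)^{-1}$ alone is not integrable at low frequencies. I would handle this on the Fourier side. The symbol $\xi^{\alpha}\xi_i\xi_j|\xi|^{-2}e^{-t|\xi|^2}$ with $|\alpha|=k+1$ is integrable, so its inverse transform is a bounded continuous function. The identity $|\xi|^{-2}e^{-t|\xi|^2}=\int_t^\infty e^{-s|\xi|^2}\,ds$ together with Fubini, which is permitted because the $s$-integral converges absolutely for $x\neq0$ and for $|x|\le\sqrt t$ as shown above, gives exactly the integral formula used.

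\textbf{Alternative route.} One could instead use the scaling identity $\mathbb{K}(x,t)=t^{-\frac{n+k+1}{2}}\mathbb{K}(x/\sqrt t,1)$, which reduces matters to showing that $\mathbb{K}(\cdot,1)$ is bounded and decays like $|x|^{-(n+k+1)}$. The integral computation above provides precisely this.
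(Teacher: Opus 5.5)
Your proof is correct, and it takes a genuinely different route from the paper, because the paper gives no proof at all. It only quotes the bound from \cite[Lemma~2.1]{Du-BMO-2014} and the references therein. So the comparison is between a citation and your self-contained derivation.

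Your approach writes the kernel of $\mathbb{P}e^{t\Delta}$ as
\[
\delta_{ij}\Phi(t,x)+\partial_i\partial_j\int_t^\infty\Phi(s,x)\,ds .
\]
You then bound the Gaussian part directly and the nonlocal part by splitting into $|x|\le\sqrt t$ and $|x|>\sqrt t$. This is the standard way this classical Oseen-kernel estimate is proved, and it buys a few things the citation does not:
\begin{itemize}
\item It works verbatim for every $n\ge 1$ and every $k\ge -1$.
\item It shows that only the polynomial decay $|x|^{-(n+k+1)}$ is available, with no Gaussian decay, because the nonlocal piece is essentially homogeneous of that degree at large $|x|$.
\item That polynomial decay is exactly why the paper's estimates (for instance $I_{11}$ and case (i) of $I_{22}$) have to sum over annuli with weights $(1+m)^{-4}$, whereas the pure heat-kernel terms get Gaussian tails.
\end{itemize}

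One point of precision. Fubini should not be justified by the convergence of the $s$-integral of the physical-space bound. The hypothesis Fubini needs is absolute integrability in the frequency variables on $\R^n\times(t,\infty)$, namely
\[
\int_t^\infty\int_{\R^n}|\xi|^{k+3}e^{-s|\xi|^2}\,d\xi\,ds=C\int_t^\infty s^{-\frac{n+k+3}{2}}\,ds<\infty\qquad(t>0).
\]
With this, the integral representation holds for all $x$, including $x=0$.

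Finally, the inequality in the statement should be read as a bound on $|\mathbb{K}(x,t)|$. That is what you prove, and it is what the paper uses.
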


%Then, we define the following function spaces for studying system \eqref{eq1.1}:
%\begin{definition}
%	We say $(Q,u)\in \mathbb{E}_T:=\mathbb{X}_T \times \mathbb{Y}_T$ iff $\|Q\|_{\mathbb{X}_T}+ \|u\|_{\mathbb{Y}_T}<+\infty.$
%%	\begin{align*}
%%		\|(Q,u)\|_{X_T}&=\sup_{t\in[0,T]} t^{1/2}	\|(u,\nabla Q)(t)\|_{L^\infty}+\| Q(t)\|_{L^\infty}+t^{\frac{1}{4}}||Q(t)||_{\dot C^{1/2}} %+t^{\frac{3}{4}}\|(u,\nabla Q)(t)\|_{\dot C^{1/2}}
%%		\\&+
%%		\sup_{x_0\in\R^3}\sup_{0<R\leq T}\left(R^{-3/2}\int_0^R\int_{B(x_0,\sqrt{R})}|(u,\nabla Q)(y,t)|^2 dy dt\right)^{\frac{1}{2}}
%%	\end{align*}
%\end{definition}

Now, we turn to study the following linear system from \eqref{eq1.1} with initial data $(Q_0,u_0)\in L^\infty\times {\rm BMO}^{-1}$, i.e.,
\begin{equation}\label{eq-linear}
	\begin{cases}
		\partial_tQ-\Gamma\Delta Q=0,\\
		\partial_tu-\mu\Delta u=0,\\
		(Q,u)\mid_{t=0}=(Q_0,u_0).
	\end{cases}
\end{equation}
Without loss of generality, we will set $\Gamma=\mu=1$. Then, the solution $(Q_L,u_L)$ of system \eqref{eq-linear} can be expressed by
\[
Q_L=e^{t\Delta}Q_0 \quad \textbf{\rm and}\quad u_L=e^{t\Delta}u_0.
\]
Then, we have the following result:
\begin{prop}\label{prop2.1}
	For any initial data $(Q_0,u_0)\in L^\infty\times {\rm BMO}^{-1}$, we have
	\begin{equation}\label{eq1.2}
		\|(Q_L,u_L)\|_{\mathbb{E}_T}\lesssim \|Q_0\|_{L^\infty} + \|u_0\|_{{\rm BMO}^{-1}}.
	\end{equation}
\end{prop}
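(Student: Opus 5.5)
We treat the two components separately, since the $\mathbb{E}_T$ norm splits as $\|Q_L\|_{\mathbb{X}_T}+\|u_L\|_{\mathbb{Y}_T}$. Throughout we use the explicit heat kernel $\Phi(t,x)=(4\pi t)^{-3/2}e^{-|x|^2/(4t)}$ and the scaling bounds $\|\nabla^k\Phi(t)\|_{L^1}\lesssim t^{-k/2}$.

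\emph{The $u$-part.} The Carleson term in $\|u_L\|_{\mathbb{Y}_T}$ is, for $T\le\infty$, bounded by the supremum over all $R>0$, which is exactly $\|u_0\|_{{\rm BMO}^{-1}}$ by definition. For the weighted sup term $t^{1/2}\|e^{t\Delta}u_0\|_{L^\infty}$, we first use the semigroup property $e^{t\Delta}u_0=e^{\frac t2\Delta}(e^{\frac t2\Delta}u_0)$. We then apply the standard mean-value (local smoothing) estimate for caloric functions: for $w=e^{s\Delta}u_0$ and any point $(x_0,t)$, $|w(x_0,t)|^2\lesssim t^{-5/2}\int_{t/2}^{t}\int_{B(x_0,\sqrt t)}|w|^2\,dy\,ds$. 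This follows from local parabolic regularity, or alternatively from writing $w(t)=\Phi(t/2)\ast w(t/2)$, averaging in the time variable over $[t/2,t]$, and summing the Gaussian tails over a covering of $\R^3$ by balls of radius $\sqrt t$. The right-hand side is at most $t^{-1}\cdot t^{-3/2}\int_0^t\int_{B(x_0,\sqrt t)}|w|^2\lesssim t^{-1}\|u_0\|^2_{{\rm BMO}^{-1}}$, which gives $t^{1/2}\|u_L(t)\|_{L^\infty}\lesssim\|u_0\|_{{\rm BMO}^{-1}}$.

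\emph{The $Q$-part.} The maximum principle gives $\|Q_L(t)\|_{L^\infty}\le\|Q_0\|_{L^\infty}$. Young's inequality gives $\|\nabla Q_L(t)\|_{L^\infty}\le\|\nabla\Phi(t)\|_{L^1}\|Q_0\|_{L^\infty}\lesssim t^{-1/2}\|Q_0\|_{L^\infty}$. For the Hölder seminorm we interpolate. If $|x-y|\ge\sqrt t$, then $|Q_L(x)-Q_L(y)|\le 2\|Q_0\|_{L^\infty}\le 2\|Q_0\|_{L^\infty}|x-y|^{1/2}t^{-1/4}$. If $|x-y|<\sqrt t$, the mean value theorem gives $|Q_L(x)-Q_L(y)|\le|x-y|\,\|\nabla Q_L\|_{L^\infty}\lesssim|x-y|^{1/2}t^{-1/4}\|Q_0\|_{L^\infty}$. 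Hence $t^{1/4}\|Q_L(t)\|_{\dot C^{1/2}}\lesssim\|Q_0\|_{L^\infty}$.

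\emph{The Carleson term for $\nabla Q_L$.} Here we use that $L^\infty\subset{\rm BMO}$ with $[Q_0]_{\rm BMO}\lesssim\|Q_0\|_{L^\infty}$, and that the caloric definition of $[\cdot]_{\rm BMO}$ agrees with the standard one (Stein). To keep the argument self-contained we can prove the bound directly. Fix $x_0$ and $R$, and split $Q_0=Q_0\chi_{B(x_0,2\sqrt R)}+Q_0\chi_{B(x_0,2\sqrt R)^c}=:f_1+f_2$. For $f_1$, the energy identity $\int_0^\infty\|\nabla e^{t\Delta}f_1\|_{L^2}^2\,dt\le\frac12\|f_1\|_{L^2}^2\lesssim R^{3/2}\|Q_0\|^2_{L^\infty}$ handles it. For $f_2$, points $y\in B(x_0,\sqrt R)$ lie at distance at least $\sqrt R$ from the support, so $|\nabla e^{t\Delta}f_2(y)|\lesssim\|Q_0\|_{L^\infty}\int_{|z|\ge\sqrt R}t^{-2}e^{-|z|^2/(8t)}\,dz\lesssim\|Q_0\|_{L^\infty}R^{-1/2}$ uniformly for $t\le R$. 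Integrating over $[0,R]\times B(x_0,\sqrt R)$ gives at most $R^{3/2}\|Q_0\|^2_{L^\infty}$. Combining the two pieces yields the Carleson bound.

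The main obstacle is the pointwise bound $t^{1/2}\|u_L\|_{L^\infty}\lesssim\|u_0\|_{{\rm BMO}^{-1}}$, because it requires converting an $L^2$ Carleson average into a sup bound. The mean-value/covering argument sketched above is the route we would take, following Koch--Tataru.
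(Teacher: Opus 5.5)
Your proof is correct, and it is more self-contained than the paper's, which proceeds by citation. The paper quotes Germain's estimates $\|\nabla^k e^{t\Delta}f\|_{L^\infty}\lesssim t^{-\frac{k+1}{2}}\|f\|_{{\rm BMO}^{-1}}$, together with the matching Carleson bounds, and applies them to $u_0$ and to $\nabla Q_0$, viewed as an element of ${\rm BMO}^{-1}$ with norm $\lesssim [Q_0]_{\rm BMO}$. This bounds $t^{1/2}\|\nabla Q_L\|_{L^\infty}$ and the Carleson part of $\|Q_L\|_{\mathbb{X}_T}$ by $[Q_0]_{\rm BMO}$. Then $\|Q_L\|_{L^\infty}$ comes from Young's inequality and the H\"older term from the same interpolation you use. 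The proof closes by invoking, without saying so, $[Q_0]_{\rm BMO}\lesssim\|Q_0\|_{L^\infty}$; for the caloric definition of BMO this rests on its equivalence with the classical seminorm.

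You never pass through BMO for $Q$. The gradient decay is just $\|\nabla\Phi(t)\|_{L^1}\lesssim t^{-1/2}$. Your near/far splitting of $Q_0$ (the energy identity for $Q_0\chi_{B(x_0,2\sqrt R)}$, off-diagonal kernel decay for the rest) is exactly a direct proof of the inequality $[Q_0]_{\rm BMO}\lesssim\|Q_0\|_{L^\infty}$ that the paper leaves implicit. For $u_L$ you correctly observe that the Carleson part is literally the definition of $\|u_0\|_{{\rm BMO}^{-1}}$. You then recover the $k=0$ case of Germain's bound from the local $L^2\to L^\infty$ estimate for caloric functions.

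The paper's route is shorter and records the finer fact that the gradient parts of $\|Q_L\|_{\mathbb{X}_T}$ only see $[Q_0]_{\rm BMO}$; yours needs no external input.

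One caution concerns your alternative derivation of the decay via $w(t)=\Phi(t-s)\ast w(s)$ and Cauchy--Schwarz. The average must be taken over $s$ with $t-s\sim t$, e.g.\ $s\in[0,t/2]$, so that $\Phi(t-s,x-y)\lesssim t^{-3/2}e^{-|x-y|^2/(4t)}$. It must not be taken over $s$ near $t$, where the kernel concentrates and the bound becomes circular. The mean-value route you list first has no such issue.
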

\begin{proof}
	According to \cite[Proposition~4.1]{Germain07}, one have
	\begin{equation}
		\|e^{t\Delta}\nabla^k u_0\|_{L^\infty}\leq C(k)\|u_0\|_{{\rm BMO}^{-1}}t^{-\frac{k+1}{2}}
	\end{equation}
	and
	\begin{equation}
		\sup_{x_0,R}\dfrac{1}{|B(x_0,\sqrt{R})|}\int_0^R\int_{B(x_0,\sqrt{R})}|t^{\frac{k}{2}}\nabla^k e^{t\Delta}u_0(y)|^2 dy dt\leq C(k)\|u_0\|_{{\rm BMO}^{-1}}^2.
	\end{equation}
	Then, it follows that
	\begin{equation}
		\begin{split}
			\sup_{[0,T]} t^{1/2} \|(Q_L,u_L)(t)\|_{L^\infty} +
			\sup_{x_0,R}&\left(R^{-3/2}\int_0^R\int_{B(x_0,\sqrt{R})}|(\nabla Q_L,u_L)(y,t)|^2 dy dt\right)^{\frac{1}{2}}\\
			&\lesssim \|u_0\|_{{\rm BMO}^{-1}}+[Q_0]_{{\rm BMO}}.
		\end{split}
	\end{equation}
	Also, by Young's convolution inequality, we get
	\begin{equation}
		\sup_{[0,T]}  \|Q_L(t)\|_{L^\infty}  \leq \|Q_0\|_{L^\infty}.
	\end{equation}
	%Finally,
	%we turn to show $t^{\frac{3}{4}}\|\nabla Q\|_{\dot{C}^{\frac{1}{2}}}$ can be bounded by $\|Q_0\|_{L^\infty}$.
%	Recalling the fact that
%	\begin{equation}
%		\mathbb{K}(x,t)\leq C(k)\dfrac{1}{(\sqrt{t}+ |x|)^{n+k+1}},
%	\end{equation}
%	where $\mathbb{K}(x,t):= \nabla^{k+1}\mathbb{P}e^{t\Delta}$ and $\mathbb{P}$ is the well-known Leray projection (see \cite[Lemma~2.1]{BMO-2014}).
	We then estimate the remainder as follows:
	\begin{equation}\label{eq2.9}
		\begin{split}
			t^{\frac{1}{4}}\| Q_L\|_{\dot{C}^{1/2}}=& t^{\frac{1}{4}}\sup_{x\ne y}\dfrac{\left| Q_L(x)- Q_L(y) \right|}{|x-y|^{\frac{1}{2}}}\\
			=& t^{\frac{1}{4}}\sup_{x\ne y}\left(\dfrac{\left|  Q_L(x)-  Q_L(y) \right|}{|x-y|}\right)^{\frac{1}{2}}\cdot \left|  Q_L(x)- Q_L(y) \right|^{\frac{1}{2}}\\
			\lesssim &t^{\frac{1}{4}}\|\nabla e^{t\Delta}  Q_0\|_{L^\infty}^{\frac{1}{2}} \|  Q_L\|_{L^\infty}^{\frac{1}{2}} \\
			\lesssim & t^{\frac{1}{4}}[ Q_0]_{{\rm BMO}}^{\frac{1}{2}}t^{-\frac{1}{4}}\| Q_0\|_{L^\infty}^{\frac{1}{2}}\\
			\lesssim &\| Q_0\|_{L^\infty}.
		\end{split}
	\end{equation}
%	\begin{equation}
%		\begin{split}
%			t^{\frac{3}{4}}\|\nabla Q_L\|_{\dot{C}^{1/2}}=& t^{\frac{3}{4}}\sup_{x\ne y}\dfrac{\left| \nabla Q_L(x)- \nabla Q_L(y) \right|}{|x-y|^{\frac{1}{2}}}\\
%			=& t^{\frac{3}{4}}\sup_{x\ne y}\left(\dfrac{\left| \nabla Q_L(x)- \nabla Q_L(y) \right|}{|x-y|}\right)^{\frac{1}{2}}\cdot \left| \nabla Q_L(x)- \nabla Q_L(y) \right|^{\frac{1}{2}}\\
%			\lesssim &t^{\frac{3}{4}}\|\nabla e^{t\Delta} \nabla Q_0\|_{L^\infty}^{\frac{1}{2}} \| e^{t\Delta} \nabla Q_0\|_{L^\infty}^{\frac{1}{2}} \\
%			\lesssim & t^{\frac{3}{4}}[ Q_0]_{{\rm BMO}}^{\frac{1}{2}}t^{-\frac{1}{2}}[ Q_0]_{{\rm BMO}}^{\frac{1}{2}}t^{-\frac{1}{4}}\\
%			\lesssim &[ Q_0]_{{\rm BMO}}.
%		\end{split}
%	\end{equation}
	Collecting all the above estimates, we get \eqref{eq1.2} and this completes the proof. 
	%$$
	%\|(Q_L,u_L)\|_{X_T}\leq C\left(\|u_0\|_{{\rm BMO}^{-1}}+\|Q_0\|_{L^\infty}\right).
	%$$
\end{proof}
%\begin{align*}
%	=& 4t^{-\frac{1}{2}}\left\|\int_{\frac{t}{8}}^{\frac{3t}{8}} e^{(t-\tau)\Delta}e^{\tau\Delta}u_0 d\tau \right\|_{L^\infty}\\
%	\lesssim & t^{-\frac{1}{2}}\left\|\int_{\frac{t}{8}}^{\frac{3t}{8}} \int_{\R^3} \dfrac{1}{(t-\tau)^{\frac{3}{2}}}e^{-\frac{(x-y)^2}{4(t-\tau)}}e^{\tau\Delta}u_0 dyd\tau \right\|_{L^\infty}\\
%	\lesssim & t^{-\frac{1}{2}}\left\|\int_{\frac{t}{8}}^{\frac{3t}{8}} \left(\int_{\R^3} \dfrac{1}{(t-\tau)^{\frac{3}{2}}}e^{-\frac{(x-y)^2}{4(t-\tau)}}\left(e^{\tau\Delta}u_0\right)^2 dy\right)^{\frac{1}{2}}d\tau \right\|_{L^\infty}\\
%	\lesssim & t^{-\frac{1}{2}}\left\|\left(\int_{\frac{t}{8}}^{\frac{3t}{8}} \int_{\R^3} \dfrac{1}{(t-\tau)^{\frac{3}{2}}}e^{-\frac{(x-y)^2}{4(t-\tau)}}\left(e^{\tau\Delta}u_0\right)^2 dyd\tau\right)^{\frac{1}{2}} \right\|_{L^\infty}\cdot\sqrt{t}\\
%	\lesssim & \left\|\left(\sum_{m=0}^\infty e^{-\frac{m}{4}}t^{-\frac{3}{2}}\int_{\frac{t}{8}}^{\frac{3t}{8}} \int_{m\leq\frac{|x-y|}{\sqrt{t}}\leq m+1}  \left(e^{\tau\Delta}u_0\right)^2 dyd\tau\right)^{\frac{1}{2}}\right\|_{L^\infty}\\
%	\lesssim &\sup_{x,t1>0}\left(\sum_{m=0}^\infty e^{-\frac{m}{4}}t^{-\frac{3}{2}}\int_0^t \int_{B(x,\sqrt{t})}  \left(e^{\tau\Delta}u_0\right)^2 dyd\tau\right)^{\frac{1}{2}}\\
%	\lesssim &\|u_0\|_{{\rm BMO}^{-1}}
%\end{align*}

\section{Proof of Theorem~\eqref{thm1.1}}\label{sec-3}
In this section, we shall prove our main result, which implies the local-in-time existence of a Koch-Tataru solution for the incompressible active nematic liquid crystals in $\R^3$. To do this, we write the system \eqref{eq1.1} as an integral system in the following:
\begin{numcases}{}
	Q= e^{t\Delta}Q_0 -\int_0^t e^{(t-\tau)\Delta}F_1(Q,u)d\tau,\\
	u= e^{t\Delta}u_0 -\int_0^t e^{(t-\tau)\Delta}\mathbb{P}\nabla\cdot F_2(Q,u)d\tau,
\end{numcases} 
where
\begin{equation}
	\begin{split}
		F_1(Q,u)&:= (u\cdot\nabla)Q + Q\Omega(u)-\Omega(u) Q-\lambda |Q|D(u)-\Gamma \widetilde{H}[Q], \\
		F_2(Q,u)&:= u\otimes u+ \nabla Q \odot\nabla Q+ \Delta QQ-Q\Delta Q +\lambda (|Q|\Delta Q+|Q|\widetilde{H}[Q])-\kappa  Q,\\
		\widetilde{H}[Q]&:= -aQ+b\left[Q^2-\dfrac{{\rm Tr}(Q^2)}{3}\mathbb{I}_3\right]-cQ{\rm Tr}(Q^2).
	\end{split}
\end{equation}
Since the basic tool is the Banach contraction principle, we define a solution map
\[
\bm{\mathcal{S}}: (\tilde{Q},\tilde{u})\in \mathbb{E}_T \mapsto (Q,u)
\]
with
\[
(Q,u)=\bm{\mathcal{S}}(\tilde{Q},\tilde{u})= \left(S_1(\tilde{Q},\tilde{u}),S_2(\tilde{Q},\tilde{u})\right)
\]
and
\begin{numcases}{}
	Q=S_1(\tilde{Q},\tilde{u})= e^{t\Delta}Q_0 -\int_0^t e^{(t-\tau)\Delta}F_1(\tilde{Q},\tilde{u})d\tau,\label{eq3.4}\\
	u=S_2(\tilde{Q},\tilde{u})= e^{t\Delta}u_0 -\int_0^t e^{(t-\tau)\Delta}\mathbb{P}\nabla\cdot F_2(\tilde{Q},\tilde{u})d\tau,\label{eq3.5}
\end{numcases}
Moreover, for $\epsilon>0$, we define the ball $\mathcal{B}_\epsilon(Q_L,u_L)$ in $\mathbb{E}_T$ with center $(Q_L,u_L)$ and radius $\epsilon$ by
\[
\mathcal{B}_\epsilon(Q_L,u_L)=\left\lbrace (Q,u)\in \mathbb{E}_T \mid \|Q-Q_L\|_{\mathbb{X}_T}+ \|u-u_L\|_{\mathbb{Y}_T}\leq \epsilon \right\rbrace.
\]
Then we have the following two lemmas.
\begin{lemma}\label{lem3.1}
	There exists $\epsilon_0>0$ such that if
	\[
	\|Q_0\|_{L^\infty}+ \|u_0\|_{{\rm BMO}^{-1}}\leq \epsilon_0,
	\]
	Then $\bm{\mathcal{S}}$ maps $\mathcal{B}_{\epsilon_0}(Q_L,u_L)$ to itself.
\end{lemma}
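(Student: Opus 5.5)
The plan is to write $(Q,u)=\bm{\mathcal S}(\tilde Q,\tilde u)$ as the linear part $(Q_L,u_L)$ plus Duhamel terms. By Proposition~\ref{prop2.1}, $(Q_L,u_L)\in\mathbb E_T$ with norm $\lesssim\epsilon_0$. For $(\tilde Q,\tilde u)\in\mathcal B_{\epsilon_0}(Q_L,u_L)$ this gives $\|\tilde Q\|_{\mathbb X_T}+\|\tilde u\|_{\mathbb Y_T}\le C\epsilon_0$. It therefore suffices to show
\[
\Big\|\int_0^t e^{(t-\tau)\Delta}F_1(\tilde Q,\tilde u)\,d\tau\Big\|_{\mathbb X_T}+\Big\|\int_0^t e^{(t-\tau)\Delta}\mathbb P\nabla\cdot F_2(\tilde Q,\tilde u)\,d\tau\Big\|_{\mathbb Y_T}\le C\big(\epsilon_0^2+\epsilon_0^3+\epsilon_0^4+T\epsilon_0\big).
\]
Taking $T\le 1$ and then $\epsilon_0$ and $T$ small (for the linear term $-aQ$, $\kappa Q$ also $T$ small relative to $C$) makes the right-hand side $\le\epsilon_0$. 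This is why the result is local in time.

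I would organize the nonlinearities by their scaling.

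\emph{Divergence-form bilinear terms.} These are $u\otimes u$ and $\nabla Q\odot\nabla Q$ in $F_2$. Each factor is controlled by $\tau^{-1/2}$ in $L^\infty$ and by the Carleson $L^2$ quantity, so the product $f$ satisfies $\sup_\tau \tau\|f\|_{L^\infty}$ bounded and has a Carleson $L^1$ bound. The Koch--Tataru bilinear lemma then applies verbatim: split $\int_0^t$ into $\tau<t/2$ and $\tau>t/2$, and split space into near and far regions, using the kernel bound $|\nabla\mathbb P e^{t\Delta}|\lesssim(\sqrt t+|x|)^{-4}$ from the lemma above. This bounds the term in $\mathbb Y_T$ by $\lesssim\|\tilde u\|_{\mathbb Y}^2+\|\tilde Q\|_{\mathbb X}^2$.

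\emph{Transport-type terms in $F_1$.} For $(u\cdot\nabla)Q$ I use the same product structure. Bounding the $\mathbb X_T$ norm, namely $L^\infty$, $t^{1/4}\dot C^{1/2}$, $t^{1/2}\nabla$, and Carleson of $\nabla$, is exactly Wang's estimate for the forcing $|\nabla d|^2d$ in the BMO theory: the Carleson $L^1$ control removes the logarithmic divergence of $\int\tau^{-1}d\tau$. The H\"older piece then follows by interpolating the $L^\infty$ and gradient bounds, exactly as in \eqref{eq2.9}.

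For $Q\Omega(u)-\Omega(u)Q-\lambda|Q|D(u)$, the gradient falls on $u$, which the $\mathbb Y_T$ norm does not control. I would rewrite, for example, $Q\partial_ju=\partial_j(Qu)-(\partial_jQ)u$. The second piece is of the previous type. The first is handled like the Navier--Stokes term, with $|Qu|\lesssim\|Q\|_{L^\infty}|u|$, so $\tau^{1/2}\|Qu\|_{L^\infty}$ and the Carleson $L^2$ norm are bounded. For $|Q|$, I use that it is Lipschitz in $Q$, so $|\nabla|Q||\le|\nabla Q|$.

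\emph{Lower-order polynomial terms.} The terms in $\widetilde H$, $\kappa Q$, and $\lambda|Q|\widetilde H$ are bounded pointwise by $\|Q\|_{L^\infty}$ times polynomials. They contribute $\lesssim T\epsilon_0$ in $L^\infty$, and $\lesssim\sqrt T\epsilon_0$ after one derivative of the heat kernel.

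\emph{Main obstacle: second-order terms in $F_2$.} These are $\Delta QQ-Q\Delta Q$ and $\lambda|Q|\Delta Q$. For the commutator I would use $\Delta Q\,Q-Q\Delta Q=\partial_k(\partial_kQ\,Q-Q\,\partial_kQ)$, since the $\partial_kQ\,\partial_kQ$ terms cancel. For $|Q|\Delta Q$ I would use $|Q|\Delta Q=\partial_k(|Q|\partial_kQ)-\partial_k|Q|\,\partial_kQ$. In both cases the bad part becomes $\mathbb P\nabla\nabla e^{(t-\tau)\Delta}g$ with $|g|\lesssim|Q||\nabla Q|$, and the crude bound $\int_{t/2}^t(t-\tau)^{-1}\tau^{-1/2}d\tau$ diverges at $\tau=t$.

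To handle this, I would use that $\nabla^2\mathbb Pe^{s\Delta}$ has zero mean to subtract $g(x)$, and then use H\"older regularity of $g$. The $t^{1/4}\|Q\|_{\dot C^{1/2}}$ term in $\mathbb X_T$ is designed for this: it handles the $Q$ factor, giving an integrable factor $(t-\tau)^{-3/4}$. The $\nabla Q$ factor would need a matching H\"older bound. I would try to obtain it either from a bootstrap of the $Q$-equation (via heat smoothing of the already-estimated Duhamel term), or by placing one derivative back onto $Q$ via the commutator structure. This is the step I expect to require the most care.
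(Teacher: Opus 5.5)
\emph{Verdict: genuine gap.} Your outline follows the same Kato/Koch--Tataru scheme as the paper: the same divergence identities for $\Delta QQ-Q\Delta Q$ and $|Q|\Delta Q$, the same $|\nabla|Q||\le|\nabla Q|$, and the same interpolation for the $\dot C^{1/2}$ piece. However, it stops exactly at the decisive estimate, so it is not yet a proof.

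\textbf{The step you leave open.} For $\tau\in[t/2,t]$, the term $\nabla^2\mathbb P e^{(t-\tau)\Delta}g$ with $g\sim Q\nabla Q$ is only integrable if you subtract $g(x)$. That requires a H\"older bound on $\nabla Q$, and $\mathbb X_T$ contains none. Neither of your two remedies works as stated.
\begin{itemize}
\item The bootstrap route cannot work inside the fixed-point map. There $(\tilde Q,\tilde u)$ is an arbitrary point of the ball, not a solution, so no equation is available to bootstrap. Any such bound must be built into the norm, e.g.\ $t^{(1+\alpha)/2}\|\nabla Q\|_{\dot C^\alpha}$. Proving it then needs genuine parabolic Schauder (Besov $B^\alpha_{\infty,\infty}$) estimates, because integrating $\|\nabla^2e^{s\Delta}g\|_{\dot C^\alpha}\lesssim s^{-1}\|g\|_{\dot C^\alpha}$ diverges again.
\item The commutator route fails. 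After freezing $Q$ at $x$, the leftover is schematically the commutator of $Q(x)$ with $(\nabla^2\mathbb Pe^{s\Delta}\nabla Q)(x)$, which does not vanish. The term $\lambda|Q|\Delta Q$ has no commutator structure at all.
\end{itemize}

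\textbf{The same obstruction appears in $F_1$.} The terms $Q\Omega(u)-\Omega(u)Q-\lambda|Q|D(u)$ are \emph{not} handled like the Navier--Stokes term in the component $t^{1/2}\|\nabla Q\|_{L^\infty}$ of $\mathbb X_T$. After writing $Q\partial_ju=\partial_j(Qu)-\partial_jQ\,u$, two derivatives fall on the heat kernel while only $|Qu|\lesssim\tau^{-1/2}$. This gives the same divergent integral $\int_{t/2}^t(t-\tau)^{-1}\tau^{-1/2}\,d\tau$. Here the H\"older trick would need H\"older control of $u$, which $\mathbb Y_T$ lacks.

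\textbf{The paper does not supply the missing idea.} At exactly these places ($I_{12}$, case (ii) of $I_{22}$, $J_{22}$, case (ii) of $J_{32}$) it asserts two bounds:
\begin{itemize}
\item $\nabla^2\mathbb Pe^{(t-\tau)\Delta}(Q\nabla Q)$ is bounded by $(t-\tau)^{-3/4}\|Q\|_{\dot C^{1/2}}\|\nabla Q\|_{L^\infty}$;
\item $\nabla^2e^{(t-\tau)\Delta}(Qu)$ is bounded by $(t-\tau)^{-3/4}\|Q\|_{\dot C^{1/2}}\|u\|_{L^\infty}$.
\end{itemize}
These cover only the piece $\int K(x-y)\,(Q(y)-Q(x))\,\nabla Q(y)\,dy$ (respectively with $u$). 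The frozen-coefficient piece $Q(x)\,(\nabla^2\mathbb Pe^{s\Delta}\nabla Q)(x)$ (respectively $Q(x)\,(\nabla^2e^{s\Delta}u)(x)$) is dropped, and the $\mathbb E_T$ norms give only $s^{-1}\tau^{-1/2}$ for it.

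Moreover, by the interpolation \eqref{eq2.9}, $t^{1/4}\|Q\|_{\dot C^{1/2}}\lesssim\|Q\|_{L^\infty}^{1/2}\bigl(t^{1/2}\|\nabla Q\|_{L^\infty}\bigr)^{1/2}$. So the H\"older component of $\mathbb X_T$ carries no information beyond its $L^\infty$ and Lipschitz parts, and cannot cure a divergence those parts leave.

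Your diagnosis that H\"older control of $\nabla Q$, and in view of $F_1$ also of $u$, is needed is correct. Closing the argument requires enlarging $\mathbb X_T$ and $\mathbb Y_T$ by weighted seminorms such as $t^{(1+\alpha)/2}\|\nabla Q\|_{\dot C^\alpha}$ and $t^{(1+\alpha)/2}\|u\|_{\dot C^\alpha}$. These would then be proved with Schauder-type maximal regularity on $[t/2,t]$, which neither you nor the paper carries out.
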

%We will prove that $S$ is a contraction mapping for $E_0=\|u_0\|_{{\rm BMO}^{-1}}+\|Q_0\|_{L^\infty}$ small enough.

\begin{proof}
	In view of the formulation \eqref{eq3.4} and \eqref{eq3.5}, we only need to show that
	\[
	\underbrace{\left\| \int_0^t e^{(t-\tau)\Delta}F_1(\tilde{Q},\tilde{u})d\tau \right\|_{\mathbb{X}_T}}_{\mathcal{I}_Q} + \underbrace{\left\|\int_0^t e^{(t-\tau)\Delta}\mathbb{P}\nabla\cdot F_2(\tilde{Q},\tilde{u})d\tau  \right\|_{\mathbb{Y}_T}}_{\mathcal{I}_u} \leq \epsilon_0.
	\]
	\textbf{Step~1}. In this step, we shall estimate $\mathcal{I}_u$. By the definition of norm $\mathbb{Y}_T$, it is sufficient to verify that it holds
	\begin{equation}\label{eq3.6}
		\begin{split}
			t^{\frac{1}{2}}\left\| \int_0^t e^{(t-\tau)\Delta}\mathbb{P}\nabla \cdot F_2(\tilde{Q},\tilde{u})d\tau \right\|_{L^\infty}\hspace{9.5cm}&\\
			+ \sup_{x_0\in\R^3}\sup_{0<R\leq T}\left(R^{-3/2}\int_0^R\int_{B(x_0,\sqrt{R})}\left(\int_0^t e^{(t-\tau)\Delta}\mathbb{P}\nabla\cdot F_2(\tilde{Q},\tilde{u})d\tau \right)^2 dy dt\right)^{\frac{1}{2}}\lesssim \epsilon_0.
		\end{split}
	\end{equation}
	For the first term on the left hand side of \eqref{eq3.6}, we estimate it in two parts:
	\begin{align}
	t^{\frac{1}{2}}\left\| \int_0^t e^{(t-\tau)\Delta}\mathbb{P}\nabla \cdot F_2(\tilde{Q},\tilde{u})d\tau \right\|_{L^\infty}=t^{\frac{1}{2}}\left\| \int_0^{\frac{t}{2}} +\int_{\frac{t}{2}}^{t}\right\|_{L^\infty} \overset{\Delta}{=} I_{11}+I_{12}.
	\end{align}
%	Recalling the fact that
%	\begin{equation}
%	\mathbb{K}(x,t)\leq C(k)\dfrac{1}{(\sqrt{t}+ |x|)^{n+k+1}},
%	\end{equation}
%	where $\mathbb{K}(x,t):= \nabla^{k+1}\mathbb{P}e^{t\Delta}$ and $\mathbb{P}$ is the Leray projection (see \cite[Lemma~2.1]{BMO-2014}).
	%Then,
	According to Proposition~\ref{prop2.1}, together with the following identities 
	\begin{align*}
		\left(\Delta \tilde{Q}\tilde{Q}-\tilde{Q}\Delta \tilde{Q}\right)_{\alpha\beta}=&\sum_j {\rm div} \left( \tilde{Q}_{\alpha j}\nabla \tilde{Q}_{j\beta} - \nabla \tilde{Q}_{\alpha j} \tilde{Q}_{j\beta}\right),\\
		|\tilde{Q}|\Delta \tilde{Q}=&{\rm div}\left(|\tilde{Q}|\nabla \tilde{Q}\right)-\nabla |\tilde{Q}|\cdot\nabla \tilde{Q},
	\end{align*}
	we have
	\begin{align*}
	I_{11}\lesssim& t^{\frac{1}{2}}\left\|  \int_0^{\frac{t}{2}} \nabla\mathbb{P}e^{(t-\tau)\Delta} \left( \tilde{u}\otimes \tilde{u}+ \nabla \tilde{Q} \odot\nabla \tilde{Q} - \nabla |\tilde{Q}| \cdot\nabla \tilde{Q} +\sum_{l=1}^{4}|\tilde{Q}|^l \right)d\tau \right\|_{L^\infty}\\
	& +t^{\frac{1}{2}}\left\|  \int_0^{\frac{t}{2}} \nabla^2\mathbb{P}e^{(t-\tau)\Delta} \left(\nabla \tilde{Q}\tilde{Q}-\tilde{Q}\nabla \tilde{Q} + |\tilde{Q}|\nabla \tilde{Q} \right)d\tau \right\|_{L^\infty}\\
	\lesssim& t^{\frac{1}{2}}\left\|  \int_0^{\frac{t}{2}}\int_{\R^3}\dfrac{1}{\left(\sqrt{t-\tau} +|x-y| \right)^{4}}\left( |\tilde{u}|^2 + |\nabla \tilde{Q}|^2+ \sum_{l=1}^{4}|\tilde{Q}|^l \right)dyd\tau \right\|_{L^\infty}\\
	&+t^{\frac{1}{2}}\left\|  \int_0^{\frac{t}{2}}\left|\int_{\R^3}\dfrac{1}{\left(\sqrt{t-\tau} +|x-y| \right)^{5}}dy\right|\|\tilde{Q}\|_{L^\infty}\|\nabla \tilde{Q}\|_{L^\infty}d\tau \right\|_{L^\infty}\\
	\lesssim& \left\| t^{-\frac{3}{2}} \int_0^{\frac{t}{2}}\int_{\R^3}\dfrac{1}{\left(1 +\frac{|x-y|}{\sqrt{t-\tau}} \right)^{4}}\left( |\tilde{u}|^2 + |\nabla \tilde{Q}|^2 + \sum_{l=1}^{4}|\tilde{Q}|^l \right)dyd\tau \right\|_{L^\infty} +t^{\frac{1}{2}}\left\|  \int_0^{\frac{t}{2}}\dfrac{1}{t-\tau}\dfrac{\|\tilde{Q}\|_{\mathbb{X}_T}^2}{\sqrt{\tau}}d\tau \right\|_{L^\infty}\\
	\lesssim& \left\| t^{-\frac{3}{2}}\sum_{m=0}^{+\infty} \int_0^{\frac{t}{2}}\int_{m\leq \frac{|x-y|}{\sqrt{t-\tau}}\leq m+1}\dfrac{|\tilde{u}|^2 + |\nabla \tilde{Q}|^2+ \sum_{l=1}^{4}|\tilde{Q}|^l}{(1 +m )^{4}} dyd\tau \right\|_{L^\infty}+ \|\tilde{Q}\|_{\mathbb{X}_T}^2\left\|  \int_0^{\frac{t}{2}}\dfrac{1}{t-\tau}\dfrac{\sqrt{t}}{\sqrt{\tau}}d\tau \right\|_{L^\infty}\\
	\lesssim& \left\| \sum_{m=0}^{+\infty}\dfrac{m^2}{(1 +m )^{4}} t^{-\frac{3}{2}}\int_0^{\frac{t}{2}}\int_{B(x,\sqrt{t})}\left(|\tilde{u}|^2 + |\nabla \tilde{Q}|^2 + \sum_{l=1}^{4}|\tilde{Q}|^l\right) dyd\tau \right\|_{L^\infty}+ \|\tilde{Q}\|_{\mathbb{X}_T}^2   \int_0^{\frac{t}{2}}\dfrac{t^{-1}}{\left(1-\frac{\tau}{t}\right)\cdot\sqrt{\frac{\tau}{t}}} d\tau \\
	\lesssim& \left\| \sum_{m=0}^{+\infty}\dfrac{m^2}{(1 +m )^{4}} t^{-\frac{3}{2}}\int_0^{\frac{t}{2}}\int_{B(x,\sqrt{t})}  \sum_{l=1}^{4}|\tilde{Q}|^l dyd\tau \right\|_{L^\infty}+\|\left(\tilde{Q},\tilde{u}\right)\|_{\mathbb{E}_T}^2  + \|\tilde{Q}\|_{\mathbb{X}_T}^2  \int_0^{\frac{1}{2}}\dfrac{1}{\left(1-s\right)\cdot\sqrt{s}}ds\\
	\lesssim&  \|\left(\tilde{Q},\tilde{u}\right)\|_{\mathbb{E}_T}^2  + t\sum_{l=1}^4\| \tilde{Q}\|_{L^\infty}^l
    \end{align*}
    where we also used the fact that $|\nabla |\tilde{Q}||\leq |\nabla \tilde{Q}|$. Similarly, we can get
    \begin{align*}
	I_{12} 
	\lesssim& t^{\frac{1}{2}}\left\|  \int_{\frac{t}{2}}^t\left|\int_{\R^3}\dfrac{1}{\left(\sqrt{t-\tau} +|x-y| \right)^{4}}dy\right|\left(\|\tilde{u}\|_{L^\infty}^2+ \|\nabla \tilde{Q}\|_{L^\infty}^2 +\sum_{l=1}^4 \|\tilde{Q}\|_{L^\infty}^l   \right)d\tau \right\|_{L^\infty}\\
	&+ t^{\frac{1}{2}}\left\|  \int_{\frac{t}{2}}^t \left|\int_{\R^3}\dfrac{1}{\left(\sqrt{t-\tau} +|x-y| \right)^{5}}dy\right| |\tilde{Q}| |\nabla \tilde{Q}| d\tau \right\|_{L^\infty}\\
	\lesssim&  t^{\frac{1}{2}}\left\|  \int_{\frac{t}{2}}^t\dfrac{1}{\sqrt{t-\tau}}\dfrac{\|\left(\tilde{Q},\tilde{u}\right)\|_{\mathbb{E}_T}^2}{\tau}d\tau \right\|_{L^\infty}+ t\sum_{l=1}^4\|\tilde{Q}\|_{L^\infty}^l + t^{\frac{1}{2}}\left\|  \int_{\frac{t}{2}}^t\dfrac{1}{\sqrt{t-\tau}^{3/2}}\| \tilde{Q}\|_{\dot{C}^{1/2}}\|\nabla\tilde{Q}\|_{L^\infty}d\tau \right\|_{L^\infty}\\
	\lesssim&  \|\left(\tilde{Q},\tilde{u}\right)\|_{\mathbb{E}_T}^2\cdot  \int_{\frac{t}{2}}^t\dfrac{t^{-1}}{\sqrt{1-\frac{\tau}{t}}\cdot\frac{\tau}{t}} d\tau + t\sum_{l=1}^4\|\tilde{Q}\|_{L^\infty}^l + t^{\frac{1}{2}}\left\|  \int_{\frac{t}{2}}^t\dfrac{1}{\sqrt{t-\tau}^{3/2}}\dfrac{\|\tilde{Q}\|_{\mathbb{X}_T}^2}{\tau^{3/4}}d\tau \right\|_{L^\infty}\\
	\lesssim&  \|\left(\tilde{Q},\tilde{u}\right)\|_{\mathbb{E}_T}^2 \int_{\frac{1}{2}}^1\dfrac{1}{ \sqrt{1-s} \cdot s}+ \dfrac{1}{ \sqrt{1-s}^{3/2} \cdot s^{3/4}}ds+ t\sum_{l=1}^4\|\tilde{Q}\|_{L^\infty}^l \\
	\lesssim&  \|\left(\tilde{Q},\tilde{u}\right)\|_{\mathbb{E}_T}^2 + t\sum_{l=1}^4
	\|\tilde{Q}\|_{L^\infty}^l.
	\end{align*}
	And thus
	\begin{equation}\label{eq3.9}
		t^{\frac{1}{2}}\left\| \int_0^t e^{(t-\tau)\Delta}\mathbb{P}\nabla \cdot F_2(\tilde{Q},\tilde{u})d\tau \right\|_{L^\infty}\lesssim \|\left(\tilde{Q},\tilde{u}\right)\|_{\mathbb{E}_T}^2 + t\sum_{l=1}^4
		\|\tilde{Q}\|_{L^\infty}^l.
	\end{equation}
	Next, in order to get the estimate of the second term on the left hand side of \eqref{eq3.6}, we introduce the following smooth cut-off function for any given $x_0\in\R^3$ and $R\in (0,T]$:
	\begin{equation}\label{eq1.6}
		\chi\left(\frac{y}{\sqrt{R}}\right):=
		\begin{cases}
			1,&\text{$|y-x_0|\leq 3\sqrt{R}$},\\
			0,&\text{$|y-x_0|\geq 5\sqrt{R}$}.
		\end{cases}
	\end{equation}
	Hence, we only need to show that
	\[
	\begin{split}
		 &\sup_{x_0\in\R^3}\sup_{0<R\leq T}\left(R^{-3/2}\int_0^R\int_{B(x_0,\sqrt{R})}\left(\nabla \int_0^t e^{(t-\tau)\Delta}\mathbb{P}\chi F_2(\tilde{Q},\tilde{u})d\tau \right)^2 dy dt\right)^{\frac{1}{2}}\\
		 +& \sup_{x_0\in\R^3}\sup_{0<R\leq T}\left(R^{-3/2}\int_0^R\int_{B(x_0,\sqrt{R})}\left(\nabla\int_0^t e^{(t-\tau)\Delta}\mathbb{P}(1-\chi)  F_2(\tilde{Q},\tilde{u})d\tau \right)^2 dy dt\right)^{\frac{1}{2}}  := I_{21} +  I_{22}\lesssim \epsilon_0.
	\end{split}
	\]
	To estimate $I_{21}$, we set up a heat equation
	\begin{equation}
		W_t-\Delta W=\chi F_2(\tilde{Q},\tilde{u}),
	\end{equation}
	with initial data $W\mid_{t=0}=0$.
	Hereafter, we will drop the Leray projector $\mathbb{P}$ for simplicity. Note that it is bounded in $L^2$ and commutes with $e^{(t-\tau)\Delta}$ and $\nabla$. Following the pioneering work by Koch and Tataru \cite{Koch-Tataru}, an energy argument implies that
	\begin{equation}\label{eq1.18}
		\int_0^R\int_{B(x_0,\sqrt{R})}	|\nabla W|^2 dyd\tau\leq \int_0^R \int_{\R^3}	|\nabla W|^2 dyd\tau \leq \int_0^R \int_{\R^3}	|\chi F_2(\tilde{Q},\tilde{u})  W| dyd\tau.
	\end{equation}
	Then, it follows from \eqref{eq1.6} that
	\[
	\int_0^R \int_{\R^3}|\chi F_2(\tilde{Q},\tilde{u})  W| dyd\tau\lesssim \|W\|_{L^\infty(Q(x_0,5\sqrt{R}))}\int_0^R \int_{\R^3}|\chi F_2(\tilde{Q},\tilde{u})| dyd\tau.
	\]
	where $Q(x_0,\sqrt{R})=B(x_0,\sqrt{R})\times (0,R)$ is the space-time ball.
	Indeed, we have
	\[
	\|W\|_{L^\infty(Q(x_0,5\sqrt{R}))}= \left\| \int_0^{\frac{t}{2}} e^{(t-\tau)\Delta}\chi F_2(\tilde{Q},\tilde{u})  d\tau + \int_{\frac{t}{2}}^t e^{(t-\tau)\Delta}\chi F_2(\tilde{Q},\tilde{u}) d\tau \right\|_{L^\infty(Q(x_0,5\sqrt{R}))}.
	\]
	Clearly, via a similar argument as $I_{11}$ and $I_{12}$, we are able to evaluate $\|W\|_{L^\infty(Q(x_0,5\sqrt{R}))}$ as follows:
	%Similarly to $I_{21}$ and $I_{22}$, we then get
	\begin{equation}\label{eq1.20}
		\begin{split}
			&\left\| \int_0^{\frac{t}{2}} e^{(t-\tau)\Delta}\chi F_2(\tilde{Q},\tilde{u})  d\tau \right\|_{L^\infty(Q(x_0,5\sqrt{R}))}\\
			\lesssim& \left\| \int_0^{\frac{t}{2}} e^{(t-\tau)\Delta}  \left(|\tilde{u}|^2 +|\nabla\tilde{Q}|^2+\sum_{l=1}^4|\tilde{Q}|^l \right)  d\tau \right\|_{L^\infty(Q(x_0,5\sqrt{R}))} + \left\| \int_0^{\frac{t}{2}} \nabla e^{(t-\tau)\Delta}  \left(|\nabla\tilde{Q}||\tilde{Q}| \right)  d\tau \right\|_{L^\infty(Q(x_0,5\sqrt{R}))}\\
			\lesssim& \left\| \dfrac{1}{\sqrt{t}^3} \sum_{m=0}^{+\infty}\int_0^{\frac{t}{2}}\int_{m\leq \frac{|x-y|}{\sqrt{t-\tau}}\leq m+1}e^{-m^2}\left( |\tilde{u}|^2  + |\nabla \tilde{Q}|^2 +\sum_{l=1}^{4}|\tilde{Q}|^l \right)dyd\tau \right\|_{L^\infty(Q(x_0,5\sqrt{R}))}\\
			& +\left\|  \int_0^{\frac{t}{2}}\dfrac{1}{\sqrt{t-\tau}}\cdot\frac{\|\tilde{Q}\|_{\mathbb{X}_T}^2}{\sqrt{\tau}}d\tau \right\|_{L^\infty(Q(x_0,5\sqrt{R}))}\\
			\lesssim&  \|\left(\tilde{Q},\tilde{u}\right)\|_{\mathbb{E}_T}^2+ t\sum_{l=1}^4\|\tilde{Q}\|_{L^\infty}^l +  \|\tilde{Q}\|_{\mathbb{X}_T}^2 \int_0^{\frac{1}{2}}\dfrac{1}{ \sqrt{1-s}  \sqrt{s}}ds \\
			\lesssim & \|\left(\tilde{Q},\tilde{u}\right)\|_{\mathbb{E}_T}^2 + t\sum_{l=1}^4
			\|\tilde{Q}\|_{L^\infty}^l,\\
%		\end{split}
%	\end{equation}
%	and
%	%Moreover, by the definition of $\chi$, we estimate
%	\begin{equation}\label{eq1.21}
%		\begin{split}
			&\left\| \int_{\frac{t}{2}}^t e^{(t-\tau)\Delta}\chi F_2(\tilde{Q},\tilde{u})  d\tau \right\|_{L^\infty(Q(x_0,5\sqrt{R}))}\\
			\lesssim & t\sum_{l=1}^4\|\tilde{Q}\|_{L^\infty}^l  +\left\|  \int_{\frac{t}{2}}^t\int_{\R^3}\dfrac{e^{-\frac{(x-y)^2}{4(t-\tau)}}}{\sqrt{t-\tau}^3}\cdot\frac{\tau}{\tau}\left( \|\tilde{u}\|_{L^\infty}^2  + \|\nabla \tilde{Q}\|_{L^\infty}^2  \right)dyd\tau \right\|_{L^\infty} + \left\|  \int_{\frac{t}{2}}^t\dfrac{1}{\sqrt{t-\tau}}\cdot\frac{\|\tilde{Q}\|_{\mathbb{X}_T}^2}{\sqrt{\tau}}d\tau \right\|_{L^\infty}\\
			\lesssim &t\sum_{l=1}^4\|\tilde{Q}\|_{L^\infty}^l  +\|\left(\tilde{Q},\tilde{u}\right)\|_{\mathbb{E}_T}^2  \left(\left\|  \int_{\frac{t}{2}}^t\frac{1}{\tau}d\tau\int_{\R^3}\dfrac{e^{-\frac{(x-y)^2}{4(t-\tau)}}}{\sqrt{t-\tau}^3}dy \right\|_{L^\infty}+  \int_{\frac{1}{2}}^1\dfrac{1}{\sqrt{1-s}\sqrt{s}} ds\right)  \\
			\lesssim & \|\left(\tilde{Q},\tilde{u}\right)\|_{\mathbb{E}_T}^2 + t\sum_{l=1}^4
			\|\tilde{Q}\|_{L^\infty}^l.
		\end{split}
	\end{equation}
	%Then, it follow from \eqref{eq1.20} and \eqref{eq1.21} that
	Thus, we infer that
	\begin{equation}\label{eq1.22}
		\|W\|_{L^\infty(Q(x_0,5\sqrt{R}))}\lesssim \|\left(\tilde{Q},\tilde{u}\right)\|_{X_T}^2 + t\sum_{l=1}^4\|\tilde{Q}\|_{L^\infty}^l.
	\end{equation}
	Next, we turn to estimate $R^{-3/2}\int_0^R \int_{\R^3}|\chi F_2(\tilde{Q},\tilde{u})| dyd\tau$.
	Similarly, one can see that
	\begin{align}\label{eq1.23}
		R^{-3/2}\int_0^R \int_{\R^3}&|\chi F_2(\tilde{Q},\tilde{u})| dyd\tau\nonumber\\
		\lesssim &R^{-3/2}\int_0^R \int_{B(x_0,5\sqrt{R})}\left(|\tilde{u}|^2 +|\nabla\tilde{Q}|^2+\sum_{l=1}^4|\tilde{Q}|^l \right) dyd\tau\nonumber\\
		& + R^{-3/2} \int_0^R  \int_{B(x_0,5\sqrt{R})}\frac{1}{\sqrt{R}}\nabla_y \chi\left(\frac{y}{\sqrt{R}}\right) |\nabla\tilde{Q}||\tilde{Q}| dyd\tau\\
		\lesssim & \|\left(\tilde{Q},\tilde{u}\right)\|_{\mathbb{E}_T}^2 + R\sum_{l=1}^4\|\tilde{Q}\|_{L^\infty}^l+  R^{-2}\int_0^R\int_{B(x_0,5\sqrt{R})}\frac{1}{\sqrt{\tau}}dyd\tau\cdot \|\tilde{Q}\|_{\mathbb{X}_T}^2\nonumber\\
		\lesssim & \|\left(\tilde{Q},\tilde{u}\right)\|_{\mathbb{E}_T}^2 + R\sum_{l=1}^4\|\tilde{Q}\|_{L^\infty}^l.\nonumber
	\end{align}
%	\begin{equation}\label{eq1.23}
%		\begin{split}
%			R^{-3/2}\int_0^R \int_{\R^3}&|\chi F_2(\tilde{Q},\tilde{u})| dyd\tau\\
%			\lesssim &R^{-3/2}\int_0^R \int_{B(x_0,5\sqrt{R})}\left(|\tilde{u}|^2 +|\nabla\tilde{Q}|^2+\sum_{l=1}^4|\tilde{Q}|^l \right) dyd\tau\\
%			& + R^{-3/2} \int_0^R  \int_{B(x_0,5\sqrt{R})}\frac{1}{\sqrt{R}}\nabla_y \chi\left(\frac{y}{\sqrt{R}}\right) |\nabla\tilde{Q}||\tilde{Q}| dyd\tau\\
%			\lesssim & \|\left(\tilde{Q},\tilde{u}\right)\|_{\mathbb{E}_T}^2 + R\sum_{l=1}^4\|\tilde{Q}\|_{L^\infty}^l+  R^{-2}\int_0^R\int_{B(x_0,5\sqrt{R})}\frac{1}{\sqrt{\tau}}dyd\tau\cdot \|\tilde{Q}\|_{\mathbb{X}_T}^2\\
%			\lesssim & \|\left(\tilde{Q},\tilde{u}\right)\|_{\mathbb{E}_T}^2 + R\sum_{l=1}^4\|\tilde{Q}\|_{L^\infty}^l.
%		\end{split}
%	\end{equation}
	Recalling that $R\leq T$, together with \eqref{eq1.18}, \eqref{eq1.22} and \eqref{eq1.23}, we then get
	\begin{equation}\label{eq3.16}
		\begin{split}
			I_{21}\lesssim & \sup_{x_0\in\R^3}\sup_{0<R\leq T}\left(R^{-3/2}\int_0^R\int_{B(x_0,\sqrt{R})}\left(\nabla\int_0^t e^{(t-\tau)\Delta}\mathbb{P}\chi  F_2(\tilde{Q},\tilde{u})d\tau \right)^2 dy dt\right)^{\frac{1}{2}}\\
			= & \sup_{x_0\in\R^3}\sup_{0<R\leq T}\left(R^{-3/2}\int_0^R\int_{B(x_0,\sqrt{R})}	|\nabla W|^2 dydt \right)^{\frac{1}{2}}\\
			\lesssim& \|\left(\tilde{Q},\tilde{u}\right)\|_{\mathbb{E}_T}^2 + T\sum_{l=1}^4\|\tilde{Q}\|_{L^\infty}^l.
		\end{split}
	\end{equation}
	For the remainder term $I_{22}$, it is easy to see that
	\begin{equation}\label{eq1.25}
		I_{22}^2\lesssim \sup_{x_0\in\R^3,0<R\leq T} R \left\|\nabla\int_0^t e^{(t-\tau)\Delta}\mathbb{P}(1-\chi) F_2(\tilde{Q},\tilde{u})d\tau\right\|_{L^\infty({Q(x_0,\sqrt{R})})}^2.
	\end{equation}
	Similarly to the proof of \cite[Proposition~3.1]{BMO-2014}, one can estimate the above term in the following two different ways.
	\begin{itemize}
		\item [(i)]  As the first case, we assume that $0\leq \tau\leq t\leq (\sqrt{R}/2)^2$ and thus
		\begin{align*}
			&\sup_{x_0\in\R^3,0< R\leq T} R \left\|\nabla\int_0^t e^{(t-\tau)\Delta}\mathbb{P}(1-\chi) F_2(\tilde{Q},\tilde{u})d\tau\right\|_{L^\infty({Q(x_0,\sqrt{R})})}^2\\
			\lesssim& \sup_{x_0\in\R^3,0< R\leq T} R \left\| \int_0^t\int_{\R^3} \dfrac{1-\chi(y/\sqrt{R})}{(\sqrt{t-\tau}+|x-y|)^4} \left(|\tilde{u}|^2 +|\nabla\tilde{Q}|^2+\sum_{l=1}^4|\tilde{Q}|^l \right)dy d\tau\right\|_{L^\infty({Q(x_0,\sqrt{R})})}^2\\
			& + \sup_{x_0\in\R^3,0< R\leq T} R \left\| \int_0^t\int_{\R^3} \dfrac{1}{(\sqrt{t-\tau}+|x-y|)^5} |\nabla\tilde{Q}||\tilde{Q}| dy d\tau\right\|_{L^\infty({Q(x_0,\sqrt{R})})}^2\\
			\lesssim& \sup_{x_0\in\R^3,0< R\leq T} R\left\|  \int_0^{t}\sum_{m=1}^{+\infty}\int_{m\sqrt{R}\leq |x-y|\leq (m+1)\sqrt{R}}\dfrac{1}{(m\sqrt{R})^4}\left( |\tilde{u}|^2  + |\nabla \tilde{Q}|^2 +\sum_{l=1}^{4}|\tilde{Q}|^l \right)dyd\tau \right\|_{L^\infty(Q(x_0,\sqrt{R}))}^2\\
			& + \sup_{x_0\in\R^3,0< R\leq T} \left\|  \int_0^{t} \dfrac{1}{\sqrt{t-\tau}}\cdot\dfrac{\sqrt{\tau}}{\sqrt{\tau}}\|\nabla \tilde{Q}\|_{L^\infty}  \|\tilde{Q}\|_{L^\infty}dyd\tau \right\|_{L^\infty(Q(x_0,\sqrt{R}))}^2\\
			\lesssim& \sup_{x_0\in\R^3,0< R\leq T} \left( \sup_{x\in B(x_0,\sqrt{R})} R^{-3/2} \int_0^{R} \int_{B(x,\sqrt{R})}( |\tilde{u}|^2  + |\nabla \tilde{Q}|^2 )dyd\tau + t\sum_{l=1}^4\|\tilde{Q}\|_{L^\infty}^l \right)^2+  \|\tilde{Q}\|_{\mathbb{X}_T}^4\\
			\lesssim & \left( \|\left(\tilde{Q},\tilde{u}\right)\|_{\mathbb{E}_T}^2 + t\sum_{l=1}^4\|\tilde{Q}\|_{L^\infty}^l \right)^2.
		\end{align*}
		\item [(ii)] As the second case, we set $(\sqrt{R}/2)^2\leq t\leq R$ and observe that
		\begin{align*}
			&\sup_{x_0\in\R^3,0< R\leq T} R \left\|\nabla\int_0^\frac{t}{2} e^{(t-\tau)\Delta}\mathbb{P}(1-\chi) F_2(\tilde{Q},\tilde{u})d\tau\right\|_{L^\infty({Q(x_0,\sqrt{R})})}^2\\
			\lesssim& \sup_{x_0\in\R^3,0< R\leq T} R \left\| \int_0^\frac{t}{2}\int_{\R^3} \dfrac{1}{(\sqrt{t-\tau}+|x-y|)^4} \left(|\tilde{u}|^2 +|\nabla\tilde{Q}|^2+\sum_{l=1}^4|\tilde{Q}|^l \right)dy d\tau\right\|_{L^\infty({Q(x_0,\sqrt{R})})}^2\\
			& + \sup_{x_0\in\R^3,0< R\leq T} R \left\| \int_0^\frac{t}{2}\int_{\R^3} \dfrac{1}{(\sqrt{t-\tau}+|x-y|)^5} |\nabla\tilde{Q}||\tilde{Q}| dy d\tau\right\|_{L^\infty({Q(x_0,\sqrt{R})})}^2\\
			\lesssim& \sup_{x_0\in\R^3,0< R\leq T} \left\| t^{-3/2} \int_0^{\frac{t}{2}}\sum_{m=0}^{+\infty}\int_{m\leq \frac{|x-y|}{\sqrt{t}}\leq m+1}\dfrac{1}{(1+m)^4}\left( |\tilde{u}|^2  + |\nabla \tilde{Q}|^2 +\sum_{l=1}^{4}|\tilde{Q}|^l \right)dyd\tau \right\|_{L^\infty(Q(x_0,\sqrt{R}))}^2\\
			& + \sup_{x_0\in\R^3,0< R\leq T} \left(  \int_0^{\frac{t}{2}} \dfrac{\sqrt{R}}{\sqrt{t-\tau}}\cdot\dfrac{1}{\sqrt{t-\tau}\sqrt{\tau}}d\tau \cdot\|\tilde{Q}\|_{\mathbb{X}_T}^2 \right)^2\\
			\lesssim & \left( \|\left(\tilde{Q},\tilde{u}\right)\|_{\mathbb{E}_T}^2 + t\sum_{l=1}^4\|\tilde{Q}\|_{L^\infty}^l \right)^2
		\end{align*}
		and
		\begin{align*}
			&\sup_{x_0\in\R^3,0< R\leq T} R \left\|\nabla\int_\frac{t}{2}^t e^{(t-\tau)\Delta}\mathbb{P}(1-\chi) F_2(\tilde{Q},\tilde{u})d\tau\right\|_{L^\infty({Q(x_0,\sqrt{R})})}^2\\
			\lesssim& \sup_{x_0\in\R^3,0< R\leq T} R \left\| \int_\frac{t}{2}^t\int_{\R^3} \dfrac{1}{(\sqrt{t-\tau}+|x-y|)^4} \left(|\tilde{u}|^2 +|\nabla\tilde{Q}|^2+\sum_{l=1}^4|\tilde{Q}|^l \right)dy d\tau\right\|_{L^\infty({Q(x_0,\sqrt{R})})}^2\\
			& + \sup_{x_0\in\R^3,0< R\leq T} R \left\| \int_\frac{t}{2}^t \int_{\R^3} \dfrac{1}{(\sqrt{t-\tau}+|x-y|)^5} |\nabla\tilde{Q}||\tilde{Q}| dy d\tau\right\|_{L^\infty({Q(x_0,\sqrt{R})})}^2\\
			\lesssim& \sup_{x_0\in\R^3,0< R\leq T} \left(  \int_{\frac{t}{2}}^t \dfrac{\sqrt{R}}{\sqrt{\tau}}\cdot\dfrac{1}{\sqrt{t-\tau}\sqrt{\tau}}d\tau\|\left(\tilde{Q},\tilde{u}\right)\|_{\mathbb{E}_T}^2 + t\sum_{l=1}^4\|\tilde{Q}\|_{L^\infty}^l \right)^2\\
			& + \sup_{x_0\in\R^3,0< R\leq T} \left(  \int_{\frac{t}{2}}^t \dfrac{1}{\sqrt{t-\tau}^{3/2}}\cdot\dfrac{\sqrt{R}}{\sqrt{\tau}}\cdot\dfrac{\tau^{3/4}\|\tilde{Q}\|_{\dot{C}^{1/2}}\|\nabla\tilde{Q}\|_{L^\infty}}{\tau^{1/4}} d\tau  \right)^2\\
			\lesssim & \left(\|\left(\tilde{Q},\tilde{u}\right)\|_{\mathbb{E}_T}^2 + t\sum_{l=1}^4\|\tilde{Q}\|_{L^\infty}^l\right)^2.
		\end{align*}
	\end{itemize}
	%According to the estimates for $(i)$, $(ii)$, we get
	Hence, we obtain that
	\begin{equation}\label{eq3.18}
		I_{22}\lesssim \|\left(\tilde{Q},\tilde{u}\right)\|_{\mathbb{E}_T}^2 + t\sum_{l=1}^4\|\tilde{Q}\|_{L^\infty}^l.
	\end{equation}
	Now, it follows from \eqref{eq3.9}, \eqref{eq3.16} and \eqref{eq3.18} that
	\begin{equation}\label{eq3.20}
		\mathcal{I}_u \lesssim \|\left(\tilde{Q},\tilde{u}\right)\|_{\mathbb{E}_T}^2 + T\sum_{l=1}^4\|\tilde{Q}\|_{L^\infty}^l.
	\end{equation}
	\textbf{Step~2}. In this step, we deal with $\mathcal{I}_Q$. Similar to the previous argument for $\mathcal{I}_u$, we need to check that
	\begin{equation}\label{eq3.21}
		\begin{split}
			\left\| \int_0^t e^{(t-\tau)\Delta} F_1(\tilde{Q},\tilde{u})d\tau \right\|_{L^\infty}+ t^{\frac{1}{4}}\left\| \int_0^t e^{(t-\tau)\Delta} F_1(\tilde{Q},\tilde{u})d\tau \right\|_{\dot{C}^{1/2}}+t^{\frac{1}{2}}\left\| \nabla\int_0^t e^{(t-\tau)\Delta} F_1(\tilde{Q},\tilde{u})d\tau \right\|_{L^\infty}&\\
			+ \sup_{x_0\in\R^3}\sup_{0<R\leq T}\left(R^{-3/2}\int_0^R\int_{B(x_0,\sqrt{R})}\left(\nabla\int_0^t e^{(t-\tau)\Delta} F_1(\tilde{Q},\tilde{u})d\tau \right)^2 dy dt\right)^{\frac{1}{2}}\lesssim \epsilon_0.
		\end{split}
	\end{equation}
	Indeed, it is easy to see that the second term on the left hand side of \eqref{eq3.21} can be demonstrated through interpolation as shown in \eqref{eq2.9}. Thus we only need to estimate the remainder three terms, respectively.
%	\[
%	\begin{split}
%		t^{\frac{1}{4}}\|\tilde{Q}\|_{\dot{C}^{1/2}}=& t^{\frac{1}{4}}\sup_{x\ne y}\dfrac{\left| \tilde{Q}(x)- \tilde{Q}(y) \right|}{|x-y|^{\frac{1}{2}}}\\
%		=& t^{\frac{1}{4}}\sup_{x\ne y}\left(\dfrac{\left| \tilde{Q}(x)- \tilde{Q}(y) \right|}{|x-y|}\right)^{\frac{1}{2}}\cdot \left| \tilde{Q}(x)- \tilde{Q}(y) \right|^{\frac{1}{2}}\\
%		\lesssim &t^{\frac{1}{4}}\| \nabla\tilde{Q}\|_{L^\infty}^{\frac{1}{2}} \|\tilde{Q}\|_{L^\infty}^{\frac{1}{2}} \\
%		\lesssim &\|\left(\tilde{Q},\tilde{u}\right)\|_{X_T}^2 + t\sum_{l=1}^3\|\tilde{Q}\|_{L^\infty}^l.
%	\end{split}
%	\]

	For the first one, we again separate it by the following two parts:
	\[
	\left\| \int_0^t e^{(t-\tau)\Delta}F_1(\tilde{Q},\tilde{u})d\tau\right\|_{L^\infty}=\left\| \int_0^{\frac{t}{2}} +\int_{\frac{t}{2}}^{t}\right\|_{L^\infty} \overset{\Delta}{=} J_{11}+J_{12}.
	\]
	Recall that
	\begin{align*}
		2\left(\tilde{Q}\Omega(\tilde{u})\right)_{\alpha\beta}=& \sum_{j} \partial_\beta\left(Q_{\alpha j} u_j\right) - \partial_j\left(Q_{\alpha j} u_\beta\right)- \partial_\beta Q_{\alpha j} u_j   + \partial_j Q_{\alpha j} u_\beta,\\
		2\left(\Omega(\tilde{u})\tilde{Q}\right)_{\alpha\beta}=& \sum_{j} \partial_j\left(u_\alpha Q_{j\beta} \right) - \partial_\alpha\left(u_jQ_{j\beta} \right)- u_\alpha\partial_\beta Q_{j\beta}    + u_j\partial_\alpha Q_{j\beta} ,\\
		2|\tilde{Q}|D(\tilde{u})=& \nabla\left(|\tilde{Q}|(\tilde{u}+\tilde{u}^T)\right) - \nabla|\tilde{Q}|\otimes(\tilde{u}+\tilde{u}^T),
	\end{align*}
	we then have
	\begin{align*}
		J_{11}\lesssim& \left\|  \int_0^{\frac{t}{2}}\int_{\R^3}\dfrac{e^{-\frac{(x-y)^2}{4(t-\tau)}}}{\sqrt{t-\tau}^3}\left( |\tilde{u}|^2  + |\nabla \tilde{Q}|^2 +\sum_{l=1}^{3}|\tilde{Q}|^l \right)dyd\tau \right\|_{L^\infty} + \left\|  \int_0^{\frac{t}{2}} \nabla e^{(t-\tau)\Delta} |\tilde{Q}||\tilde{u}| d\tau \right\|_{L^\infty}\\
		\lesssim &\left\| \dfrac{1}{\sqrt{t}^3} \sum_{m=0}^{+\infty}\int_0^{\frac{t}{2}}\int_{m\leq \frac{|x-y|}{\sqrt{t-\tau}}\leq m+1}e^{-m^2}\left( |\tilde{u}|^2  + |\nabla \tilde{Q}|^2 +\sum_{l=1}^{3}|\tilde{Q}|^l \right)dyd\tau \right\|_{L^\infty}\\
		& +\left\|  \int_0^{\frac{t}{2}}\dfrac{1}{\sqrt{t-\tau}}\cdot\frac{\sqrt{\tau}}{\sqrt{\tau}}\|\tilde{u}\|_{L^\infty}\| \tilde{Q}\|_{L^\infty}d\tau \right\|_{L^\infty}\\
		\lesssim &\left\|  \sum_{m=0}^{+\infty}m^2e^{-m^2}t^{-\frac{3}{2}}\int_0^{\frac{t}{2}}\int_{B(x,\sqrt{t})}\left( |\tilde{u}|^2  + |\nabla \tilde{Q}|^2 +\sum_{l=1}^{3}|\tilde{Q}|^l \right)dyd\tau \right\|_{L^\infty} +\|\left(\tilde{Q},\tilde{u}\right)\|_{\mathbb{E}_T}^2   \int_0^{\frac{1}{2}}\dfrac{1}{\sqrt{1-s}\sqrt{s}} ds  \\
		\lesssim & \|\left(\tilde{Q},\tilde{u}\right)\|_{\mathbb{E}_T}^2 + t\sum_{l=1}^3\|\tilde{Q}\|_{L^\infty}^l
	\end{align*}
	and
	\begin{align*}
		J_{12}\lesssim& \left\|  \int_{\frac{t}{2}}^t\int_{\R^3}\dfrac{e^{-\frac{(x-y)^2}{4(t-\tau)}}}{\sqrt{t-\tau}^3}\left( |\tilde{u}|^2  + |\nabla \tilde{Q}|^2 +\sum_{l=1}^{3}|\tilde{Q}|^l \right)dyd\tau \right\|_{L^\infty} + \left\|  \int_{\frac{t}{2}}^t \nabla\mathbb{P}e^{(t-\tau)\Delta} |\tilde{Q}||\tilde{u}| d\tau \right\|_{L^\infty}\\
		\lesssim & \left\|  \int_{\frac{t}{2}}^t\int_{\R^3}\dfrac{e^{-\frac{(x-y)^2}{4(t-\tau)}}}{\sqrt{t-\tau}^3}\cdot\frac{\tau}{\tau}\left( \|\tilde{u}\|_{L^\infty}^2  + \|\nabla \tilde{Q}\|_{L^\infty}^2  \right)dyd\tau \right\|_{L^\infty} + t\sum_{l=1}^3\|\tilde{Q}\|_{L^\infty}^l  +\left\|  \int_{\frac{t}{2}}^t \frac{\|\left(\tilde{Q},\tilde{u}\right)\|_{\mathbb{E}_T}^2}{\sqrt{t-\tau}\sqrt{\tau}}d\tau \right\|_{L^\infty}\\
		\lesssim &t\sum_{l=1}^3\|\tilde{Q}\|_{L^\infty}^l  +\|\left(\tilde{Q},\tilde{u}\right)\|_{\mathbb{E}_T}^2  \left(\left\|  \int_{\frac{t}{2}}^t\frac{1}{\tau}d\tau\int_{\R^3}\dfrac{e^{-\frac{(x-y)^2}{4(t-\tau)}}}{\sqrt{t-\tau}^3}dy \right\|_{L^\infty}+  \int_{\frac{1}{2}}^1\dfrac{1}{\sqrt{1-s}\sqrt{s}} ds\right)  \\
		\lesssim & \|\left(\tilde{Q},\tilde{u}\right)\|_{\mathbb{E}_T}^2 + t\sum_{l=1}^3\|\tilde{Q}\|_{L^\infty}^l.
	\end{align*}
	Similarly, we can regard the third term in \eqref{eq3.21} as
	\begin{align}
		t^{\frac{1}{2}}\left\| \nabla \int_0^t e^{(t-\tau)\Delta} F_1(\tilde{Q},\tilde{u})d\tau \right\|_{L^\infty}=t^{\frac{1}{2}}\left\| \nabla\left(\int_0^{\frac{t}{2}} +\int_{\frac{t}{2}}^{t}\right)\right\|_{L^\infty} \overset{\Delta}{=} J_{21}+J_{22},
	\end{align}
	and thus
	\begin{align*}
		J_{21}\lesssim&  t^{\frac{1}{2}}\left\|  \int_0^{\frac{t}{2}}\int_{\R^3}\dfrac{1}{\left(\sqrt{t-\tau} +|x-y| \right)^{4}}\left( |\tilde{u}|^2 + |\nabla \tilde{Q}|^2+ \sum_{l=1}^{3}|\tilde{Q}|^l \right)dyd\tau \right\|_{L^\infty}\\
		&+t^{\frac{1}{2}}\left\|  \int_0^{\frac{t}{2}}\left|\int_{\R^3}\dfrac{1}{\left(\sqrt{t-\tau} +|x-y| \right)^{5}}dy\right|\|\tilde{u}\|_{L^\infty}\| \tilde{Q}\|_{L^\infty}d\tau \right\|_{L^\infty}\\
		\lesssim& \left\| t^{-\frac{3}{2}} \int_0^{\frac{t}{2}}\int_{\R^3}\dfrac{1}{\left(1 +\frac{|x-y|}{\sqrt{t-\tau}} \right)^{4}}\left( |\tilde{u}|^2 + |\nabla \tilde{Q}|^2 + \sum_{l=1}^{3}|\tilde{Q}|^l \right)dyd\tau \right\|_{L^\infty} +t^{\frac{1}{2}}\left\|  \int_0^{\frac{t}{2}}\dfrac{1}{t-\tau}\dfrac{\|(\tilde{Q},\tilde{u})\|_{\mathbb{E}_T}^2}{\sqrt{\tau}}d\tau \right\|_{L^\infty}\\
		\lesssim& \left\| t^{-\frac{3}{2}}\sum_{m=0}^{+\infty} \int_0^{\frac{t}{2}}\int_{m\leq \frac{|x-y|}{\sqrt{t-\tau}}\leq m+1}\dfrac{|\tilde{u}|^2 + |\nabla \tilde{Q}|^2+ \sum_{l=1}^{3}|\tilde{Q}|^l}{(1 +m )^{4}} dyd\tau \right\|_{L^\infty}+ \|(\tilde{Q},\tilde{u})\|_{\mathbb{E}_T}^2\left\|  \int_0^{\frac{t}{2}}\dfrac{1}{t-\tau}\dfrac{\sqrt{t}}{\sqrt{\tau}}d\tau \right\|_{L^\infty}\\
		\lesssim& \left\| \sum_{m=0}^{+\infty}\dfrac{m^2}{(1 +m )^{4}} t^{-\frac{3}{2}}\int_0^{\frac{t}{2}}\int_{B(x,\sqrt{t})}\left(|\tilde{u}|^2 + |\nabla \tilde{Q}|^2 + \sum_{l=1}^{3}|\tilde{Q}|^l\right) dyd\tau \right\|_{L^\infty}+ \|(\tilde{Q},\tilde{u})\|_{\mathbb{E}_T}^2\int_0^{\frac{1}{2}}\dfrac{ds}{\left(1-s\right)\sqrt{s}} \\
		\lesssim&  \|\left(\tilde{Q},\tilde{u}\right)\|_{\mathbb{E}_T}^2  + t\sum_{l=1}^3\| \tilde{Q}\|_{L^\infty}^l,\\
		J_{22} \lesssim& t^{\frac{1}{2}}\left\|  \int_{\frac{t}{2}}^t\left|\int_{\R^3}\dfrac{1}{\left(\sqrt{t-\tau} +|x-y| \right)^{4}}dy\right|\left(\|\tilde{u}\|_{L^\infty}^2+ \|\nabla \tilde{Q}\|_{L^\infty}^2 +\sum_{l=1}^3 \|\tilde{Q}\|_{L^\infty}^l   \right)d\tau \right\|_{L^\infty}\\
		&+ t^{\frac{1}{2}}\left\|  \int_{\frac{t}{2}}^t \left|\int_{\R^3}\dfrac{1}{\left(\sqrt{t-\tau} +|x-y| \right)^{5}}dy\right| |\tilde{u}| |\tilde{Q}| d\tau \right\|_{L^\infty}\\
		\lesssim&  t^{\frac{1}{2}}\left\|  \int_{\frac{t}{2}}^t\dfrac{1}{\sqrt{t-\tau}}\dfrac{\|\left(\tilde{Q},\tilde{u}\right)\|_{\mathbb{E}_T}^2}{\tau}d\tau \right\|_{L^\infty}+ t\sum_{l=1}^3\|\tilde{Q}\|_{L^\infty}^l + t^{\frac{1}{2}}\left\|  \int_{\frac{t}{2}}^t\dfrac{1}{\sqrt{t-\tau}^{3/2}}\|\tilde{u}\|_{L^\infty}\| \tilde{Q}\|_{\dot{C}^{1/2}}d\tau \right\|_{L^\infty}\\
		\lesssim&  \|\left(\tilde{Q},\tilde{u}\right)\|_{\mathbb{E}_T}^2\cdot  \int_{\frac{t}{2}}^t\dfrac{t^{-1}}{\sqrt{1-\frac{\tau}{t}}\cdot\frac{\tau}{t}} d\tau + t\sum_{l=1}^3\|\tilde{Q}\|_{L^\infty}^l + t^{\frac{1}{2}}\left\|  \int_{\frac{t}{2}}^t\dfrac{1}{\sqrt{t-\tau}^{3/2}}\dfrac{\|\left(\tilde{Q},\tilde{u}\right)\|_{\mathbb{E}_T}^2}{\tau^{3/4}}d\tau \right\|_{L^\infty}\\
		\lesssim&  \|\left(\tilde{Q},\tilde{u}\right)\|_{\mathbb{E}_T}^2 \left(\int_{\frac{1}{2}}^1\dfrac{1}{ \sqrt{1-s} \cdot s}+ \dfrac{1}{ \sqrt{1-s}^{3/2} \cdot s^{3/4}}ds\right)+ t\sum_{l=1}^3\|\tilde{Q}\|_{L^\infty}^l \\
		\lesssim&  \|\left(\tilde{Q},\tilde{u}\right)\|_{\mathbb{E}_T}^2 + t\sum_{l=1}^3
		\|\tilde{Q}\|_{L^\infty}^l.
	\end{align*}
	Finally, we deal with the following term via a similar argument as what we did in the first step,
	\[
	\sup_{x_0\in\R^3}\sup_{0<R\leq T}\left(R^{-3/2}\int_0^R\int_{B(x_0,\sqrt{R})}\left(\nabla\int_0^t e^{(t-\tau)\Delta} F_1(\tilde{Q},\tilde{u})d\tau \right)^2 dy dt\right)^{\frac{1}{2}}.
	\]
	Indeed, we shall estimate
	\begin{equation}
		\begin{split}
			%J_3:=&\sup_{x_0\in\R^3,0<R\leq T}\left(R^{-3/2}\int_0^R\int_{B(x_0,\sqrt{R})}\left(\nabla\int_0^t e^{(t-\tau)\Delta} F_1(\tilde{Q},\tilde{u})d\tau \right)^2 dy dt\right)^{\frac{1}{2}}\\
			%=
			&\sup_{x_0\in\R^3,0<R\leq T}\left(R^{-3/2}\int_0^R\int_{B(x_0,\sqrt{R})}\left(\nabla\int_0^t e^{(t-\tau)\Delta} \chi F_1(\tilde{Q},\tilde{u})d\tau \right)^2 dy dt\right)^{\frac{1}{2}}\\
			&+ \sup_{x_0\in\R^3,0<R\leq T}\left(R^{-3/2}\int_0^R\int_{B(x_0,\sqrt{R})}\left(\nabla\int_0^t e^{(t-\tau)\Delta} (1-\chi) F_1(\tilde{Q},\tilde{u})d\tau \right)^2 dy dt\right)^{\frac{1}{2}}
			\overset{\Delta}{=} I_{31} + I_{32}.
		\end{split}
	\end{equation}
	%To estimate $J_{31}$,
	Similarly, we will study the Cauchy problem of the following heat equation
	\begin{equation}
		\begin{cases}
			V_t-\Delta V=\chi F_1(\tilde{Q},\tilde{u}),\\
			V\mid_{t=0}=0.
		\end{cases}
	\end{equation}
	Then, by exploiting an energy argument, we infer that
	\begin{align}\label{eq3.25}
		J_{31}\lesssim \sup_{x_0\in\R^3,0<R\leq T}\left(\|V\|_{L^\infty(Q(x_0,5\sqrt{R}))}\cdot R^{-3/2}\int_0^R \int_{\R^3}|\chi F_1(\tilde{Q},\tilde{u})| dyd\tau \right)^{\frac{1}{2}}.
	\end{align}
	Moreover, by the Duhamel’s principle, we also have
	\[
	\|V\|_{L^\infty(Q(x_0,5\sqrt{R}))}= \left\| \int_0^{\frac{t}{2}} e^{(t-\tau)\Delta}\chi F_1(\tilde{Q},\tilde{u})  d\tau + \int_{\frac{t}{2}}^t e^{(t-\tau)\Delta}\chi F_1(\tilde{Q},\tilde{u}) d\tau \right\|_{L^\infty(Q(x_0,5\sqrt{R}))}.
	\]
	Observe that
	\begin{align}\label{eq3.26}
		&\left\| \int_0^{\frac{t}{2}} e^{(t-\tau)\Delta}\chi F_1(\tilde{Q},\tilde{u})  d\tau \right\|_{L^\infty(Q(x_0,5\sqrt{R}))}\nonumber\\
		\lesssim& \left\| \int_0^{\frac{t}{2}} e^{(t-\tau)\Delta}  \left(|\tilde{u}|^2 +|\nabla\tilde{Q}|^2+\sum_{l=1}^3|\tilde{Q}|^l \right)  d\tau \right\|_{L^\infty(Q(x_0,5\sqrt{R}))} + \left\| \int_0^{\frac{t}{2}} \nabla e^{(t-\tau)\Delta} |\tilde{u}||\tilde{Q}| d\tau \right\|_{L^\infty(Q(x_0,5\sqrt{R}))}\nonumber\\
		\lesssim& \left\| \dfrac{1}{\sqrt{t}^3} \sum_{m=0}^{+\infty}\int_0^{\frac{t}{2}}\int_{m\leq \frac{|x-y|}{\sqrt{t-\tau}}\leq m+1}e^{-m^2}\left( |\tilde{u}|^2  + |\nabla \tilde{Q}|^2 +\sum_{l=1}^{3}|\tilde{Q}|^l \right)dyd\tau \right\|_{L^\infty(Q(x_0,5\sqrt{R}))}\\
		& +\left\|  \int_0^{\frac{t}{2}}\dfrac{1}{\sqrt{t-\tau}}\cdot\frac{\sqrt{\tau}}{\sqrt{\tau}}\|\tilde{u}\|_{L^\infty}\| \tilde{Q}\|_{L^\infty}d\tau \right\|_{L^\infty(Q(x_0,5\sqrt{R}))}\nonumber\\
		\lesssim & \|\left(\tilde{Q},\tilde{u}\right)\|_{\mathbb{E}_T}^2 + t\sum_{l=1}^3\|\tilde{Q}\|_{L^\infty}^l,\nonumber
	\end{align}
	\begin{equation}
		\begin{split}
			&\left\| \int_{\frac{t}{2}}^t e^{(t-\tau)\Delta}\chi F_1(\tilde{Q},\tilde{u})  d\tau \right\|_{L^\infty(Q(x_0,5\sqrt{R}))}\\
			\lesssim & t\sum_{l=1}^3\|\tilde{Q}\|_{L^\infty}^l  +\left\|  \int_{\frac{t}{2}}^t\int_{\R^3}\dfrac{e^{-\frac{(x-y)^2}{4(t-\tau)}}}{\sqrt{t-\tau}^3}\cdot\frac{\tau}{\tau}\left( \|\tilde{u}\|_{L^\infty}^2  + \|\nabla \tilde{Q}\|_{L^\infty}^2  \right)dyd\tau \right\|_{L^\infty} + \left\|  \int_{\frac{t}{2}}^t \frac{\|\left(\tilde{Q},\tilde{u}\right)\|_{\mathbb{E}_T}^2}{\sqrt{t-\tau}\sqrt{\tau}}d\tau \right\|_{L^\infty}\\
			\lesssim &t\sum_{l=1}^3\|\tilde{Q}\|_{L^\infty}^l  +\|\left(\tilde{Q},\tilde{u}\right)\|_{\mathbb{E}_T}^2  \left(\left\|  \int_{\frac{t}{2}}^t\frac{1}{\tau}d\tau\int_{\R^3}\dfrac{e^{-\frac{(x-y)^2}{4(t-\tau)}}}{\sqrt{t-\tau}^3}dy \right\|_{L^\infty}+  \int_{\frac{1}{2}}^1\dfrac{1}{\sqrt{1-s}\sqrt{s}} ds\right)  \\
			\lesssim & \|\left(\tilde{Q},\tilde{u}\right)\|_{\mathbb{E}_T}^2 + t\sum_{l=1}^3\|\tilde{Q}\|_{L^\infty}^l,
		\end{split}
	\end{equation}
	and
	\begin{equation}\label{eq3.28}
		\begin{split}
			R^{-3/2}\int_0^R \int_{\R^3}&|\chi F_1(\tilde{Q},\tilde{u})| dyd\tau\\
			\lesssim &R^{-3/2}\int_0^R \int_{B(x_0,5\sqrt{R})}\left(|\tilde{u}|^2 +|\nabla\tilde{Q}|^2+\sum_{l=1}^3|\tilde{Q}|^l \right) dyd\tau\\
			& + R^{-3/2} \int_0^R  \int_{B(x_0,5\sqrt{R})}\frac{1}{\sqrt{R}}\nabla_y \chi\left(\frac{y}{\sqrt{R}}\right) |\tilde{u}||\tilde{Q}| dyd\tau\\
			\lesssim & \|\left(\tilde{Q},\tilde{u}\right)\|_{\mathbb{E}_T}^2 + R\sum_{l=1}^3\|\tilde{Q}\|_{L^\infty}^l+  R^{-2}\int_0^R\int_{B(x_0,5\sqrt{R})}\frac{1}{\sqrt{\tau}}dyd\tau\cdot\|\left(\tilde{Q},\tilde{u}\right)\|_{\mathbb{E}_T}^2\\
			\lesssim & \|\left(\tilde{Q},\tilde{u}\right)\|_{\mathbb{E}_T}^2 + T\sum_{l=1}^3\|\tilde{Q}\|_{L^\infty}^l.
		\end{split}
	\end{equation}
	Thus, substituting \eqref{eq3.26}-\eqref{eq3.28} into \eqref{eq3.25}, we get
	\begin{align*}
		J_{31}\lesssim \|\left(\tilde{Q},\tilde{u}\right)\|_{\mathbb{E}_T}^2 + T\sum_{l=1}^3\|\tilde{Q}\|_{L^\infty}^l.
	\end{align*}
	For $J_{32}$, we again estimate
	$
	\sup_{x_0\in\R^3,0<R\leq T} R \left\|\nabla\int_0^t e^{(t-\tau)\Delta} (1-\chi) F_1(\tilde{Q},\tilde{u})d\tau\right\|_{L^\infty({Q(x_0,\sqrt{R})})}^2.
	$
	Similarly, we distinguish two cases:
	\begin{itemize}
		\item [(i)] When $0\leq \tau\leq t\leq (\sqrt{R}/2)^2$, we have
		\begin{align*}
			&\sup_{x_0\in\R^3,0<R\leq T} R \left\|\nabla\int_0^t e^{(t-\tau)\Delta} (1-\chi) F_1(\tilde{Q},\tilde{u})d\tau\right\|_{L^\infty({Q(x_0,\sqrt{R})})}^2\\
			\lesssim& \sup_{x_0\in\R^3,0<R\leq T} R \left\| \int_0^t\int_{\R^3} \dfrac{1-\chi(y/\sqrt{R})}{(\sqrt{t-\tau}+|x-y|)^4} \left(|\tilde{u}|^2 +|\nabla\tilde{Q}|^2+\sum_{l=1}^3|\tilde{Q}|^l \right)dy d\tau\right\|_{L^\infty({Q(x_0,\sqrt{R})})}^2\\
			& + \sup_{x_0\in\R^3,0<R\leq T} R \left\| \int_0^t\int_{\R^3} \dfrac{1}{(\sqrt{t-\tau}+|x-y|)^5} |\tilde{u}||\tilde{Q}| dy d\tau\right\|_{L^\infty({Q(x_0,\sqrt{R})})}^2\\
			\lesssim& \sup_{x_0\in\R^3,0<R\leq T} R\left\|  \int_0^{t}\sum_{m=1}^{+\infty}\int_{m\sqrt{R}\leq |x-y|\leq (m+1)\sqrt{R}}\dfrac{1}{(m\sqrt{R})^4}\left( |\tilde{u}|^2  + |\nabla \tilde{Q}|^2 +\sum_{l=1}^{3}|\tilde{Q}|^l \right)dyd\tau \right\|_{L^\infty(Q(x_0,\sqrt{R}))}^2\\
			& + \sup_{x_0\in\R^3,0<R\leq T} \left\|  \int_0^{t} \dfrac{1}{\sqrt{t-\tau}}\cdot\dfrac{\sqrt{\tau}}{\sqrt{\tau}}\| \tilde{u}\|_{L^\infty}  \|\tilde{Q}\|_{L^\infty}dyd\tau \right\|_{L^\infty(Q(x_0,\sqrt{R}))}^2\\
			\lesssim& \sup_{x_0\in\R^3,0<R\leq T} \left( \sup_{x\in B(x_0,\sqrt{R})} R^{-3/2} \int_0^{R} \int_{B(x,\sqrt{R})}( |\tilde{u}|^2  + |\nabla \tilde{Q}|^2 )dyd\tau + t\sum_{l=1}^3\|\tilde{Q}\|_{L^\infty}^l \right)^2+ \|\left(\tilde{Q},\tilde{u}\right)\|_{\mathbb{E}_T}^4\\
			\lesssim & \left(\|\left(\tilde{Q},\tilde{u}\right)\|_{\mathbb{E}_T}^2 + t\sum_{l=1}^3\|\tilde{Q}\|_{L^\infty}^l\right)^2.
		\end{align*}
		\item [(ii)] When $(\sqrt{R}/2)^2\leq t\leq R$, we have
		\begin{align*}
			&\sup_{x_0\in\R^3,0<R\leq T} R \left\|\nabla\int_0^\frac{t}{2} e^{(t-\tau)\Delta} (1-\chi) F_1(\tilde{Q},\tilde{u})d\tau\right\|_{L^\infty({Q(x_0,\sqrt{R})})}^2\\
			\lesssim& \sup_{x_0\in\R^3,0<R\leq T} R \left\| \int_0^\frac{t}{2}\int_{\R^3} \dfrac{1}{(\sqrt{t-\tau}+|x-y|)^4} \left(|\tilde{u}|^2 +|\nabla\tilde{Q}|^2+\sum_{l=1}^3|\tilde{Q}|^l \right)dy d\tau\right\|_{L^\infty({Q(x_0,\sqrt{R})})}^2\\
			& + \sup_{x_0\in\R^3,0<R\leq T} R \left\| \int_0^\frac{t}{2}\int_{\R^3} \dfrac{1}{(\sqrt{t-\tau}+|x-y|)^5} |\tilde{u}||\tilde{Q}| dy d\tau\right\|_{L^\infty({Q(x_0,\sqrt{R})})}^2\\
			\lesssim& \sup_{x_0\in\R^3,0<R\leq T} \left\| t^{-3/2} \int_0^{\frac{t}{2}}\sum_{m=0}^{+\infty}\int_{m\leq \frac{|x-y|}{\sqrt{t}}\leq m+1}\dfrac{1}{(1+m)^4}\left( |\tilde{u}|^2  + |\nabla \tilde{Q}|^2 +\sum_{l=1}^{3}|\tilde{Q}|^l \right)dyd\tau \right\|_{L^\infty(Q(x_0,\sqrt{R}))}^2\\
			& + \sup_{x_0\in\R^3,0<R\leq T} \left(  \int_0^{\frac{t}{2}} \dfrac{\sqrt{R}}{\sqrt{t-\tau}}\cdot\dfrac{1}{\sqrt{t-\tau}\sqrt{\tau}}d\tau \|\left(\tilde{Q},\tilde{u}\right)\|_{\mathbb{E}_T}^2 \right)^2\\
			\lesssim & \left(\|\left(\tilde{Q},\tilde{u}\right)\|_{\mathbb{E}_T}^2 + t\sum_{l=1}^3\|\tilde{Q}\|_{L^\infty}^l\right)^2,
		\end{align*}
		and
		\begin{align*}
			&\sup_{x_0\in\R^3,0<R\leq T} R \left\|\nabla\int_\frac{t}{2}^t e^{(t-\tau)\Delta} (1-\chi) F_1(\tilde{Q},\tilde{u})d\tau\right\|_{L^\infty({Q(x_0,\sqrt{R})})}^2\\
			\lesssim& \sup_{x_0\in\R^3,0<R\leq T} R \left\| \int_\frac{t}{2}^t\int_{\R^3} \dfrac{1}{(\sqrt{t-\tau}+|x-y|)^4} \left(|\tilde{u}|^2 +|\nabla\tilde{Q}|^2+\sum_{l=1}^3|\tilde{Q}|^l \right)dy d\tau\right\|_{L^\infty({Q(x_0,\sqrt{R})})}^2\\
			& + \sup_{x_0\in\R^3,0<R\leq T} R \left\| \int_\frac{t}{2}^t \int_{\R^3} \dfrac{1}{(\sqrt{t-\tau}+|x-y|)^5} |\tilde{u}||\tilde{Q}| dy d\tau\right\|_{L^\infty({Q(x_0,\sqrt{R})})}^2\\
			\lesssim& \sup_{x_0\in\R^3,0<R\leq T} \left(  \int_{\frac{t}{2}}^t \dfrac{\sqrt{R}}{\sqrt{\tau}}\cdot\dfrac{1}{\sqrt{t-\tau}\sqrt{\tau}}d\tau \|\left(\tilde{Q},\tilde{u}\right)\|_{\mathbb{E}_T}^2 + t\sum_{l=1}^3\|\tilde{Q}\|_{L^\infty}^l \right)^2\\
			& + \sup_{x_0\in\R^3,0<R\leq T} \left(  \int_{\frac{t}{2}}^t \dfrac{1}{\sqrt{t-\tau}^{3/2}}\cdot\dfrac{\sqrt{R}}{\sqrt{\tau}}\cdot \dfrac{\tau^{3/4}\|\tilde{Q}\|_{\dot{C}^{1/2}}\|\tilde{u}\|_{L^\infty}}{\tau^{1/4}}d\tau \right)^2\\
			\lesssim & \left(\|\left(\tilde{Q},\tilde{u}\right)\|_{\mathbb{E}_T}^2 + t\sum_{l=1}^3\|\tilde{Q}\|_{L^\infty}^l\right)^2.
		\end{align*}
	\end{itemize}
	Hence, we infer that
	\[
	J_{32}\lesssim \|\left(\tilde{Q},\tilde{u}\right)\|_{\mathbb{E}_T}^2 + T\sum_{l=1}^3\|\tilde{Q}\|_{L^\infty}^l
	\]
	and thus
	\begin{equation}\label{eq3.29}
		\mathcal{I}_Q \lesssim \|\left(\tilde{Q},\tilde{u}\right)\|_{\mathbb{E}_T}^2 + T\sum_{l=1}^3\|\tilde{Q}\|_{L^\infty}^l.
	\end{equation}
\textbf{Step~3}. For any $(\tilde{Q},\tilde{u})\in \mathcal{B}_{\epsilon_0}(Q_L,u_L)$, by Proposition~\ref{prop2.1}, one has
\begin{equation}
	\|\left(\tilde{Q},\tilde{u}\right)\|_{\mathbb{E}_T}\leq C\epsilon_0.
\end{equation}
Then, it follows from \eqref{eq3.20} and \eqref{eq3.29} that
	\begin{equation}
		\|Q-Q_L\|_{\mathbb{X}_T} + \|u-u_L\|_{\mathbb{Y}_T}\lesssim \|\left(\tilde{Q},\tilde{u}\right)\|_{\mathbb{E}_T}^2  + T\sum_{l=1}^4\|\tilde{Q}\|_{L^\infty}^l\lesssim \epsilon_0^2+T\sum_{l=1}^4 \epsilon_0^l\lesssim \epsilon_0,
	\end{equation}
	for some fixed $0<T<+\infty$, provided $\epsilon_0>0$ is chosen to be sufficiently small. This completes the proof of Lemma~\ref{lem3.1}.
	%This implies that $S$ maps $(\tilde{Q},\tilde{u})\in X_T$ into $X_T$.
\end{proof}

\begin{lemma}\label{lem3.2}
	There exist $0<\epsilon_1\leq \epsilon_0$ and $\theta\in (0,1)$ such that if
	\[
	\|Q_0\|_{L^\infty}+ \|u_0\|_{{\rm BMO}^{-1}}\leq \epsilon_1,
	\]
	Then $\bm{\mathcal{S}}: \mathcal{B}_{\epsilon_1}(Q_L,u_L) \rightarrow \mathcal{B}_{\epsilon_1}(Q_L,u_L)$ is $\theta$-contractive, that is
	\begin{equation}\label{eq3.32}
		\|S(Q^1,u^1) - S(Q^2,u^2)\|_{\mathbb{E}_T} \leq\theta  \|(Q^1,u^1) - (Q^2,u^2)\|_{\mathbb{E}_T}
	\end{equation}
	for any $(u^1,Q^1), (u^2,Q^2)\in \mathcal{B}_{\epsilon_1}(Q_L,u_L)$.
\end{lemma}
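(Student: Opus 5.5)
Since $S_1$ and $S_2$ differ from $(Q_L,u_L)$ only by the Duhamel integrals, the difference $\bm{\mathcal{S}}(Q^1,u^1)-\bm{\mathcal{S}}(Q^2,u^2)$ is
\[
\Big(-\int_0^t e^{(t-\tau)\Delta}\big[F_1(Q^1,u^1)-F_1(Q^2,u^2)\big]d\tau,\ -\int_0^t e^{(t-\tau)\Delta}\mathbb{P}\nabla\cdot\big[F_2(Q^1,u^1)-F_2(Q^2,u^2)\big]d\tau\Big).
\]
The plan is to rerun the estimates of Lemma~\ref{lem3.1} on these integrals, but now with bilinear (and multilinear) differences. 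Write $\delta Q=Q^1-Q^2$ and $\delta u=u^1-u^2$. Every nonlinear term in $F_1,F_2$ is a product, so telescoping gives
\[
u^1\otimes u^1-u^2\otimes u^2=\delta u\otimes u^1+u^2\otimes\delta u,
\]
and the analogous formula for $\nabla Q\odot\nabla Q$. The commutator terms are handled in the divergence form used in Step~1, namely $\tilde Q\nabla\tilde Q-\nabla\tilde Q\,\tilde Q$, and they split as $\delta Q\,\nabla Q^1+Q^2\nabla\delta Q$. The terms $(u\cdot\nabla)Q$, $Q\Omega-\Omega Q$, $|Q|D(u)$ in $F_1$ split the same way. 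For the polynomial terms of $\widetilde H$ we use
\[
|Q^1|^{l}-|Q^2|^{l}\lesssim |\delta Q|\big(|Q^1|+|Q^2|\big)^{l-1}.
\]
Each difference is then a sum of terms of the same structure as in Lemma~\ref{lem3.1}: one factor is a difference and the other factors are $Q^i,u^i$.

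Repeating Steps~1 and~2 (the splits $\int_0^{t/2}+\int_{t/2}^t$, the Oseen kernel bound, the cut-off $\chi$ with the energy argument for $W,V$, and cases (i)/(ii) for the far part) line by line, with one factor measured in the difference norm, should give
\begin{align*}
\|\bm{\mathcal{S}}(Q^1,u^1)-\bm{\mathcal{S}}(Q^2,u^2)\|_{\mathbb{E}_T}\lesssim&\ \Big(\|(Q^1,u^1)\|_{\mathbb{E}_T}+\|(Q^2,u^2)\|_{\mathbb{E}_T}\Big)\|(\delta Q,\delta u)\|_{\mathbb{E}_T}\\
&+T\sum_{l=1}^{4}\Big(\|Q^1\|_{L^\infty}+\|Q^2\|_{L^\infty}\Big)^{l-1}\|\delta Q\|_{L^\infty}.
\end{align*}
Because the ball lies within radius $C\epsilon_1$ of the origin by Proposition~\ref{prop2.1}, the first term is bounded by $C\epsilon_1\|(\delta Q,\delta u)\|_{\mathbb{E}_T}$. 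The polynomial terms give at most $CT(1+\epsilon_1)^3\|\delta Q\|_{\mathbb{X}_T}$.

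\textbf{Main obstacle.} The linear contributions $-a\,\delta Q$ from $\widetilde H$ and $\kappa\,\delta Q$ from the active stress carry no small factor. They are only controlled by $T$, so contractivity needs $T$ small. To get $\theta<1$ I would therefore fix $T$ small (consistent with the "for some fixed $T$" in Lemma~\ref{lem3.1}) and then take $\epsilon_1$ small, so that
\[
C\epsilon_1+CT(1+\epsilon_1)^3\leq\theta<1.
\]

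A second delicate point is the quadratic terms involving $Q\nabla Q$ in the region $\tau\in[t/2,t]$. There the proof of Lemma~\ref{lem3.1} uses the Hölder seminorm, and the split must be chosen so that the Hölder seminorm always falls on whichever of the two factors the kernel acts on. Concretely, the differences $\delta Q\,\nabla Q^1$ and $Q^2\nabla\delta Q$ are estimated using $\tau^{1/4}\|\delta Q\|_{\dot C^{1/2}}\,\tau^{1/2}\|\nabla Q^1\|_{L^\infty}$ and $\tau^{1/4}\|Q^2\|_{\dot C^{1/2}}\,\tau^{1/2}\|\nabla\delta Q\|_{L^\infty}$ respectively. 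The Hölder part of the $\mathbb{X}_T$ norm of the difference is again obtained by the interpolation \eqref{eq2.9}, since $\delta Q$ and its Duhamel term satisfy the same $L^\infty$ and gradient bounds.

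Lemma~\ref{lem3.1} with $\epsilon_1\le\epsilon_0$ shows that the ball is mapped into itself. Together with the contraction estimate, this gives \eqref{eq3.32}.
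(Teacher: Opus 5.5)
Your route is the paper's: telescope $F_i(Q^1,u^1)-F_i(Q^2,u^2)$ and rerun the estimates of Lemma~\ref{lem3.1} with one factor replaced by $(\delta Q,\delta u)$. Your remark on the linear terms is correct and repairs the paper's last step. There, $(1+T)\sum_{l=0}^3(\cdots)^l\,\|(\delta Q,\delta u)\|_{\mathbb{E}_T}$ is bounded by $C\epsilon_1(1+T)\|(\delta Q,\delta u)\|_{\mathbb{E}_T}$, although the $l=0$ terms (from $a\Gamma\,\delta Q$ and $\kappa\,\delta Q$) carry no factor $\epsilon_1$. Choosing $T$ small first and then $\epsilon_1$ is the right order.

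There are, however, two genuine gaps. First, the $|Q|$-terms do not have ``the same structure as in Lemma~\ref{lem3.1}''. Because $\mathbb{Y}_T$ does not control $\nabla u$ and $\mathbb{X}_T$ does not control $\Delta Q$, the pieces $(|Q^1|-|Q^2|)D(u^2)$ in $F_1$ and $(|Q^1|-|Q^2|)\Delta Q^2$ in $F_2$ must be integrated by parts. The resulting difference factor is
\[
\nabla\big(|Q^1|-|Q^2|\big)=\tfrac{Q^1}{|Q^1|}:\nabla\delta Q+\Big(\tfrac{Q^1}{|Q^1|}-\tfrac{Q^2}{|Q^2|}\Big):\nabla Q^2 ,
\]
and its last term is of order one, not $O(|\delta Q|)$, near the zero set of $Q^2$. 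For instance, take $Q^2=\epsilon\phi(x)A$ and $Q^1=Q^2+\eta B$ with $A,B$ constant and orthogonal. Then $\|\delta Q\|_{\mathbb{X}_T}\sim\eta$, while $|\nabla(|Q^1|-|Q^2|)|$ stays of size $\epsilon|A||\nabla\phi|$ near $\{\phi=0\}$ as $\eta\to0$. The bound $|\nabla|Q||\le|\nabla Q|$ that suffices in Lemma~\ref{lem3.1} says nothing about differences, since $Q\mapsto|Q|$ is not $C^1$.

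Second, and more seriously, the step you flag as delicate fails. It is the same step the paper uses in both lemmas, so citing Lemma~\ref{lem3.1} does not cover it. On $[t/2,t]$, the terms $\nabla^2e^{(t-\tau)\Delta}\mathbb{P}(Q\nabla Q)$ (from $Q\Delta Q-\Delta QQ$) and $\nabla^2e^{(t-\tau)\Delta}(Qu)$ (from $Q\Omega-\Omega Q$, in the gradient part of $\mathbb{X}_T$) have kernels with $L^1$-norm $\sim(t-\tau)^{-1}$. So $L^\infty$ bounds on the products lead to $\int_{t/2}^t(t-\tau)^{-1}d\tau=\infty$. The cancellation of the kernel gains $(t-\tau)^{1/4}$ only if the whole product is in $\dot C^{1/2}$. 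But $[\delta Q\,\nabla Q^1]_{\dot C^{1/2}}$ involves $\|\delta Q\|_{L^\infty}[\nabla Q^1]_{\dot C^{1/2}}$ (and $Qu$ involves $[u]_{\dot C^{1/2}}$), which $\mathbb{E}_T$ does not control. No choice of split avoids this.

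Concretely, take $\lambda=0$, $T=1$ (rescale otherwise), $u_0=0$, and $Q_0=Q_c$ a constant symmetric traceless matrix of size $\epsilon$. Set $(Q^2,u^2)=(Q_c,0)=(Q_L,u_L)$ and $(Q^1,u^1)=(Q_c+\tilde q,0)$ with
\[
\tilde q(\tau,x)=\delta\sum_{k=1}^N2^{-k}\sin(2^kx_1)\,\mathbf{1}_{[1-4^{1-k},\,1-4^{-k}]}(\tau)\,B ,
\]
where $B$ is symmetric traceless with $[B,Q_c]e_1\neq0$. Then:
\begin{itemize}
\item $\|\tilde q\|_{\mathbb{X}_1}\lesssim\delta$ uniformly in $N$.
\item The commutator $[\Delta\tilde q,Q_c]$ is linear in $\tilde q$, and its divergence is a divergence-free shear field, so $\mathbb{P}$ acts trivially.
\item Each of the $N$ time slabs adds the same vector, of length $(e^{-1}-e^{-4})\delta|[B,Q_c]e_1|\sim\epsilon\delta$, to $(S_2(Q^1,0)-S_2(Q^2,0))(1,0)$.
\item The other terms of $F_2$ vanish, are gradients, or stay bounded.
\end{itemize}
Hence the Lipschitz constant of $\bm{\mathcal{S}}$ on the ball is $\gtrsim N\epsilon$ for every $N$. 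Since $S_2(Q_c,0)=0$, the case $N=\infty$ even shows that $\bm{\mathcal{S}}$ does not map the ball into $\mathbb{E}_T$. So the contraction estimate cannot hold in $\mathbb{X}_T\times\mathbb{Y}_T$ as defined.

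A correct argument needs stronger norms, for example adding $t^{3/4}[\nabla Q]_{\dot C^{1/2}}$ and $t^{3/4}[u]_{\dot C^{1/2}}$ and propagating them by parabolic Schauder estimates. It also needs a separate treatment of the nonsmooth $|Q|$-terms.
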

\begin{proof}
	For any $(u^1,Q^1), (u^2,Q^2)\in \mathcal{B}_{\epsilon_1}(Q_L,u_L)$, we rewrite
	\begin{align*}
		(\mathbb{S}_1(\bar{Q},\bar{u}),\mathbb{S}_2(\bar{Q},\bar{u})):=&S(u^1,Q^1)-S(u^2,Q^2)\\
		=&(S_1(u^1,Q^1)-S_1(u^2,Q^2),S_2(u^1,Q^1)-S_2(u^2,Q^2)).
	\end{align*}
	Then, it follows from \eqref{eq3.4} and \eqref{eq3.5} that
	\begin{numcases}{}
		\mathbb{S}_1(\bar{Q},\bar{u})=-\int_0^t e^{(t-\tau)\Delta}\left(F_1(Q^1,u^1)-F_1(Q^2,u^2) \right)d\tau\overset{\Delta}{=}-\int_0^t e^{(t-\tau)\Delta}\mathbb{N}_1(\bar{Q},\bar{u})d\tau,\\
		\mathbb{S}_2(\bar{Q},\bar{u})=-\int_0^t e^{(t-\tau)\Delta}\mathbb{P}\nabla\cdot\left(F_2(Q^1,u^1)-F_2(Q^2,u^2) \right)d\tau\overset{\Delta}{=}-\int_0^t e^{(t-\tau)\Delta}\mathbb{P}\nabla\cdot\mathbb{N}_2(\bar{Q},\bar{u})d\tau,
	\end{numcases}
	where
	\begin{align*}
		&\bar{Q}=Q^1-Q^2,\quad\bar{u}=u^1-u^2,\quad\bar{\Omega}=\Omega^1-\Omega^2\quad\text{and}\quad\bar{D}=D^1-D^2,\\
		&\mathbb{N}_1(\bar{Q},\bar{u})=\bar{u}\cdot\nabla Q^1 +u^2\cdot\nabla\bar{Q} + Q^1\bar{\Omega}- \bar{\Omega}Q^1+ \bar{Q}\Omega^2 - \Omega^2\bar{Q} -\lambda|Q^1|\bar{D}\\
		&\quad\quad\quad\quad\,\,\,-\lambda\left(|Q^1|-|Q^2|\right)D^2-b\Gamma\left[Q^1\bar{Q}+\bar{Q}Q^2-\frac{{\rm Tr}(Q^1\bar{Q}+\bar{Q}Q^2)}{3}\mathbb{I}_3\right]\\
		&\quad\quad\quad\quad\,\,\,+c\Gamma\left[\bar{Q}{\rm Tr}((Q^1)^2)+Q^2{\rm Tr}(Q^1\bar{Q}+\bar{Q}Q^2)\right]+a\Gamma\bar{Q},\\
		&\mathbb{N}_2(\bar{Q},\bar{u})=u^1\otimes\bar{u}+\bar{u}\otimes u^2+ \nabla Q^1\odot\nabla\bar{Q} + \nabla\bar{Q}\odot\nabla Q^2+ \Delta\bar{Q}Q^1-Q^1\Delta\bar{Q}+\Delta Q^2\bar{Q}-\bar{Q}\Delta Q^2\\
		&\quad\quad\quad\quad\,\,\,\,+ \lambda|Q^1|\Delta \bar{Q} + \lambda(|Q^1|-|Q^2|)\Delta Q^2-a\lambda|Q^1|\bar{Q}-a\lambda(|Q^1|-|Q^2|)Q^2+ b\lambda(|Q^1|-|Q^2|)(Q^2)^2\\
		&\quad\quad\quad\quad\,\,\,\,+b\lambda\left[|Q^1|(Q^1\bar{Q}+\bar{Q}Q^2)-\frac{(|Q^1|-|Q^2|){\rm Tr}\left((Q^2)^2\right)+|Q^1|{\rm Tr}(Q^1\bar{Q}+\bar{Q}Q^2)}{3}\mathbb{I}_3\right]-\kappa\bar{Q}\\
		&\quad\quad\quad\quad\,\,\,\,-c\lambda\left[|Q^1|\bar{Q}{\rm Tr}((Q^2)^2)+ (|Q^1|-|Q^2|)Q^2{\rm Tr}((Q^2)^2)+|Q^1|Q^1{\rm Tr}(Q^1\bar{Q}+\bar{Q}Q^2)\right].
	\end{align*}
	Now, we turn to show \eqref{eq3.32}.
	%$\|\mathbb{S}_2(\bar{Q},\bar{u}))\|_{\mathbb{Y}_T}$.
	Building on the proof of Lemma~\ref{lem3.1}, we similarly deal with
	\begin{align}\label{eq3.35}
		\|\mathbb{S}_2(\bar{Q},\bar{u})\|_{\mathbb{Y}_T}=&t^{\frac{1}{2}}\|\mathbb{S}_2(\bar{Q},\bar{u})\|_{L^\infty}+ \sup_{x_0\in\R^3,0<R\leq T}\left(R^{-3/2}\int_0^R\int_{B(x_0,\sqrt{R})}| \mathbb{S}_2(\bar{Q},\bar{u})(y,t)|^2 dy dt\right)^{\frac{1}{2}}\nonumber\\
		\lesssim& \underbrace{t^{\frac{1}{2}}\left\|\int_0^\frac{t}{2} \nabla e^{(t-\tau)\Delta}\mathbb{N}_2(\bar{Q},\bar{u})d\tau\right\|_{L^\infty}}_{\mathcal{N}_1} + \underbrace{t^{\frac{1}{2}}\left\|\int_\frac{t}{2}^t \nabla e^{(t-\tau)\Delta}\mathbb{N}_2(\bar{Q},\bar{u})d\tau\right\|_{L^\infty}}_{\mathcal{N}_2}\nonumber\\
		& +\sup_{x_0\in\R^3,R\leq T}\Bigg( \underbrace{ R^{-\frac{3}{2}}\int_0^R \int_{\R^3}| \chi \mathbb{N}_2(\bar{Q},\bar{u})| dyd\tau }_{\mathcal{N}_3}\cdot\Bigg(\underbrace{\left\| \int_0^{\frac{t}{2}} e^{(t-\tau)\Delta}\chi \mathbb{N}_2(\bar{Q},\bar{u})  d\tau\right\|_{L^\infty(Q(x_0,5\sqrt{R}))}}_{\mathcal{N}_4} \nonumber\\
		& + \underbrace{\left\| \int_{\frac{t}{2}}^t e^{(t-\tau)\Delta}\chi \mathbb{N}_2(\bar{Q},\bar{u}) d\tau \right\|_{L^\infty(Q(x_0,5\sqrt{R}))}}_{\mathcal{N}_5}\Bigg)\Bigg)^{\frac{1}{2}}\\
		& + \left( \underbrace{\sup_{x_0\in\R^3,0<R\leq T} R \left\|\nabla\int_0^t e^{(t-\tau)\Delta} (1-\chi) \mathbb{N}_2(\bar{Q},\bar{u})d\tau\right\|_{L^\infty({Q(x_0,\sqrt{R})})}^2}_{\mathcal{N}_6} \right)^{\frac{1}{2}}\nonumber 
	\end{align}
	For $\mathcal{N}_i (1\leq i\leq 5)$, we are able to evaluate them as follows:
	\begin{align*}
		\mathcal{N}_1\lesssim& t^{\frac{1}{2}}\left\|  \int_0^{\frac{t}{2}} \nabla e^{(t-\tau)\Delta} \left( \left(|u^1|+ |u^2| \right) |\bar{u}|+ \left(|\nabla Q^1|+ |\nabla Q^2| \right) |\nabla\bar{Q}| +\sum_{l=0}^{3}\left(|Q^1|^l + |Q^2|^l\right)|\bar{Q}| \right)d\tau \right\|_{L^\infty}\\
		& +t^{\frac{1}{2}}\left\|  \int_0^{\frac{t}{2}} \nabla^2 e^{(t-\tau)\Delta} \left(|\nabla\bar{Q}|| Q^1|+ |\nabla Q^2|| \bar{Q}| \right)d\tau \right\|_{L^\infty}\\
		\lesssim&t\sum_{l=0}^3\left(\| Q^1\|_{\mathbb{X}_T}^l+ \| Q^2\|_{\mathbb{X}_T}^l\right)\|\bar{Q}\|_{\mathbb{X}_T}+ t^{\frac{1}{2}} \left\| \int_0^{\frac{t}{2}} \dfrac{d\tau}{(t-\tau)\sqrt{\tau}} \left(\| Q^1\|_{\mathbb{X}_T}+ \| Q^2\|_{\mathbb{X}_T}\right)\|\bar{Q}\|_{\mathbb{X}_T} \right\|_{L^\infty}\\
		&+  \left\| \sum_{m=0}^{+\infty}\dfrac{m^2}{(1 +m )^{4}} t^{-\frac{3}{2}}\int_0^{\frac{t}{2}}\int_{B(x,\sqrt{t})}\left(\left(|u^1|+ |u^2| \right) |\bar{u}|+ \left(|\nabla Q^1|+ |\nabla Q^2| \right) |\nabla\bar{Q}| \right) dyd\tau \right\|_{L^\infty}\\
		\lesssim& (1+t)\sum_{l=0}^3\left(\| Q^1\|_{\mathbb{X}_T}^l+ \| Q^2\|_{\mathbb{X}_T}^l\right)\|\bar{Q}\|_{\mathbb{X}_T} + \sup_{x\in\R^3,0<t\leq T} \left(t^{-\frac{3}{2}}\int_0^{\frac{t}{2}}\int_{B(x,\sqrt{t})}  |\bar{u}|^2+ |\nabla \bar{Q}|^2 dyd\tau\right)^{\frac{1}{2}} \\
		&\times \sup_{x\in\R^3,0<t\leq T} \left(t^{-\frac{3}{2}}\int_0^{\frac{t}{2}}\int_{B(x,\sqrt{t})} |u^1|^2+ |u^2|^2+ |Q^1|^2+ |Q^2|^2dyd\tau\right)^{\frac{1}{2}} \\
		\lesssim&  (1+t)\sum_{l=0}^3\left(\| (Q^1,u^1)\|_{\mathbb{E}_T}^l+ \| (Q^2,u^2)\|_{\mathbb{E}_T}^l\right)\|(\bar{Q},\bar{u})\|_{\mathbb{E}_T},\\
		\mathcal{N}_2 
		\lesssim& t\sum_{l=0}^3\left(\| Q^1\|_{\mathbb{X}_T}^l+ \| Q^2\|_{\mathbb{X}_T}^l\right)\|\bar{Q}\|_{\mathbb{X}_T}+ t^{\frac{1}{2}}\left\|  \int_{\frac{t}{2}}^t\dfrac{d\tau}{\sqrt{t-\tau}^{3/2}}\left(\| Q^1\|_{\dot{C}^{1/2}}\|\nabla\bar{Q}\|_{L^\infty}+ \| \bar{Q}\|_{\dot{C}^{1/2}}\|\nabla Q^2\|_{L^\infty}\right) \right\|_{L^\infty} \\
		&+ t^{\frac{1}{2}}\left\|  \int_{\frac{t}{2}}^t\dfrac{1}{\sqrt{t-\tau}}\dfrac{\left(\|\left(Q^1,u^1\right)\|_{\mathbb{E}_T}+\|\left(Q^2,u^2\right)\|_{\mathbb{E}_T}\right)\|\left(\bar{Q},\bar{u}\right)\|_{\mathbb{E}_T}}{\tau}d\tau \right\|_{L^\infty} \\
		\lesssim& \left(\|\left(Q^1,u^1\right)\|_{\mathbb{E}_T}+\|\left(Q^2,u^2\right)\|_{\mathbb{E}_T}\right)\|\left(\bar{Q},\bar{u}\right)\|_{\mathbb{E}_T}\left|   \int_{\frac{t}{2}}^t\dfrac{\sqrt{t}}{\sqrt{t-\tau}^{3/2}\cdot\tau^{3/4}}d\tau+ \int_{\frac{t}{2}}^t\dfrac{\sqrt{t}}{\sqrt{t-\tau}\cdot\tau} d\tau\right| \\
		&+ t\sum_{l=0}^3\left(\| Q^1\|_{\mathbb{X}_T}^l+ \| Q^2\|_{\mathbb{X}_T}^l\right)\|\bar{Q}\|_{\mathbb{X}_T} \\
		\lesssim& (1+t)\sum_{l=0}^3\left(\| (Q^1,u^1)\|_{\mathbb{E}_T}^l+ \| (Q^2,u^2)\|_{\mathbb{E}_T}^l\right)\|(\bar{Q},\bar{u})\|_{\mathbb{E}_T},\\
		\mathcal{N}_3\lesssim &R^{-\frac{3}{2}}\int_0^R \int_{B(x_0,5\sqrt{R})}\left( \left(|u^1|+ |u^2| \right) |\bar{u}|+ \left(|\nabla Q^1|+ |\nabla Q^2| \right) |\nabla\bar{Q}| +\sum_{l=0}^{3}\left(|Q^1|^l + |Q^2|^l\right)|\bar{Q}| \right) dyd\tau\\
		& + R^{-3/2} \int_0^R  \int_{B(x_0,5\sqrt{R})}\frac{1}{\sqrt{R}}\nabla_y \chi\left(\frac{y}{\sqrt{R}}\right) \left( |\nabla\bar{Q}|| Q^1|+ |\nabla Q^2|| \bar{Q}|\right) dyd\tau\\
		\lesssim&R\sum_{l=0}^3\left(\| Q^1\|_{\mathbb{X}_T}^l+ \| Q^2\|_{\mathbb{X}_T}^l\right)\|\bar{Q}\|_{\mathbb{X}_T} + \left(R^{-\frac{3}{2}}\int_0^{R}\int_{B(x_0,5\sqrt{R})} |u^1|^2 + |u^2|^2+ |\nabla Q^1|^2 + |\nabla Q^2|^2dyd\tau\right)^{\frac{1}{2}}\\
		&\times \left(R^{-\frac{3}{2}}\int_0^{R}\int_{B(x_0,5\sqrt{R})} |\bar{u}|^2+ |\nabla \bar{Q}|^2dyd\tau\right)^{\frac{1}{2}}  + R^{-2} \int_0^R\int_{B(x_0,5\sqrt{R})}\frac{\left(\| Q^1\|_{\mathbb{X}_T}+  \| Q^2\|_{\mathbb{X}_T}\right)\|\bar{Q}\|_{\mathbb{X}_T}}{\sqrt{\tau}}dyd\tau\\
		\lesssim& (1+T)\sum_{l=0}^3\left(\| (Q^1,u^1)\|_{\mathbb{E}_T}^l+ \| (Q^2,u^2)\|_{\mathbb{E}_T}^l\right)\|(\bar{Q},\bar{u})\|_{\mathbb{E}_T},\\
		\mathcal{N}_4\lesssim & \left\| \int_0^{\frac{t}{2}} e^{(t-\tau)\Delta}  \left(  \left(|u^1|+ |u^2| \right) |\bar{u}|+ \left(|\nabla Q^1|+ |\nabla Q^2| \right) |\nabla\bar{Q}| \right)  d\tau \right\|_{L^\infty(Q(x_0,5\sqrt{R}))}\\
		& + \left\| \int_0^{\frac{t}{2}} \nabla e^{(t-\tau)\Delta}  \left( |\nabla\bar{Q}|| Q^1|+ |\nabla Q^2|| \bar{Q}|\right)  d\tau \right\|_{L^\infty(Q(x_0,5\sqrt{R}))} + t\sum_{l=0}^3\left(\| Q^1\|_{\mathbb{X}_T}^l+ \| Q^2\|_{\mathbb{X}_T}^l\right)\|\bar{Q}\|_{\mathbb{X}_T}\\
		\lesssim& \sup_{x\in\R^3,0<t\leq T} \left(t^{-\frac{3}{2}}\int_0^{\frac{t}{2}}\int_{B(x,\sqrt{t})} |u^1|^2+ |u^2|^2+ |\nabla Q^1|^2+ |\nabla Q^2|^2  dyd\tau\right)^{\frac{1}{2}}\\
		& \times \sup_{x\in\R^3,0<t\leq T} \left(t^{-\frac{3}{2}}\int_0^{\frac{t}{2}}\int_{B(x,\sqrt{t})} |\bar{u}|^2+ |\nabla \bar{Q}|^2dyd\tau\right)^{\frac{1}{2}}  + t\sum_{l=0}^3\left(\| Q^1\|_{\mathbb{X}_T}^l+ \| Q^2\|_{\mathbb{X}_T}^l\right)\|\bar{Q}\|_{\mathbb{X}_T}\\
		& + \left\| \int_0^{\frac{t}{2}} \dfrac{d\tau}{\sqrt{t-\tau}\sqrt{\tau}} \left(\| Q^1\|_{\mathbb{X}_T}+ \| Q^2\|_{\mathbb{X}_T}\right)\|\bar{Q}\|_{\mathbb{X}_T} \right\|_{L^\infty}\\
		\lesssim &(1+t)\sum_{l=0}^3\left(\| (Q^1,u^1)\|_{\mathbb{E}_T}^l+ \| (Q^2,u^2)\|_{\mathbb{E}_T}^l\right)\|(\bar{Q},\bar{u})\|_{\mathbb{E}_T},\\
		\mathcal{N}_5
		\lesssim & t\sum_{l=0}^3\left(\| Q^1\|_{\mathbb{X}_T}^l+ \| Q^2\|_{\mathbb{X}_T}^l\right)\|\bar{Q}\|_{\mathbb{X}_T}+ \left\|  \int_{\frac{t}{2}}^t\dfrac{d\tau}{\sqrt{t-\tau}\sqrt{\tau}}\left(\|\nabla Q^1\|_{\mathbb{X}_T}+\|Q^2\|_{\mathbb{X}_T}\right)\| \bar{Q} \|_{\mathbb{X}_T} \right\|_{L^\infty} \\
		&+ \left\|  \int_{\frac{t}{2}}^t \dfrac{\left(\|(Q^1,u^1) \|_{\mathbb{E}_T}+\|(Q^2,u^2)\|_{\mathbb{E}_T}\right)\|\left(\bar{Q},\bar{u}\right)\|_{\mathbb{E}_T}}{\tau}d\tau \right\|_{L^\infty} \\
		\lesssim& (1+t)\sum_{l=0}^3\left(\| (Q^1,u^1)\|_{\mathbb{E}_T}^l+ \| (Q^2,u^2)\|_{\mathbb{E}_T}^l\right)\|(\bar{Q},\bar{u})\|_{\mathbb{E}_T}.
	\end{align*}
To estimate $\mathcal{N}_6$, we distinguish two cases:

\noindent(i). When $0\leq \tau\leq t\leq (\sqrt{R}/2)^2$, we have
		\begin{align*}
			\mathcal{N}_6\lesssim& \sup_{x_0\in\R^3,R\leq T} R \left\| \int_0^t\int_{\R^3} \dfrac{(1-\chi(y/\sqrt{R}))\left(\left(|u^1|+ |u^2| \right) |\bar{u}|+ \left(|\nabla Q^1|+ |\nabla Q^2| \right) |\nabla\bar{Q}|  \right)}{(\sqrt{t-\tau}+|x-y|)^4} dy d\tau\right\|_{L^\infty({Q(x_0,\sqrt{R})})}^2\\
			&+ \sup_{x_0\in\R^3,R\leq T} R \left\| \int_0^t\int_{\R^3} \dfrac{1-\chi(y/\sqrt{R})}{(\sqrt{t-\tau}+|x-y|)^4} \left( \sum_{l=0}^{3}\left(|Q^1|^l + |Q^2|^l\right)|\bar{Q}| \right)dy d\tau\right\|_{L^\infty({Q(x_0,\sqrt{R})})}^2\\
			& + \sup_{x_0\in\R^3,R\leq T} R \left\| \int_0^t\int_{\R^3} \dfrac{1}{(\sqrt{t-\tau}+|x-y|)^5} \left( |\nabla\bar{Q}|| Q^1|+ |\nabla Q^2|| \bar{Q}|\right) dy d\tau\right\|_{L^\infty({Q(x_0,\sqrt{R})})}^2\\
			\lesssim& \sup_{x_0\in\R^3,R\leq T}  \left\| \int_0^{t}\sum_{m=1}^{+\infty}\int_{m\leq \frac{|x-y|}{\sqrt{R}}\leq (m+1)}\dfrac{\left(\left(|u^1|+ |u^2| \right) |\bar{u}|+ \left(|\nabla Q^1|+ |\nabla Q^2| \right) |\nabla\bar{Q}|  \right)}{m^4\sqrt{R}^3} dy d\tau\right\|_{L^\infty({Q(x_0,\sqrt{R})})}^2\\
			&+ \sup_{x_0\in\R^3,R\leq T} R \left\| \int_0^{t}\sum_{m=1}^{+\infty}\int_{m\sqrt{R}\leq |x-y|\leq (m+1)\sqrt{R}}\dfrac{1}{(m\sqrt{R})^4} \left( \sum_{l=0}^{3}\left(|Q^1|^l + |Q^2|^l\right)|\bar{Q}| \right)dy d\tau\right\|_{L^\infty({Q(x_0,\sqrt{R})})}^2\\
			& + \sup_{x_0\in\R^3,R\leq T} \left\|  \int_0^{t} \dfrac{1}{\sqrt{t-\tau}}\cdot\dfrac{\left(\| Q^1\|_{\mathbb{X}_T}+ \| Q^2\|_{\mathbb{X}_T}\right)\|\bar{Q} \|_{\mathbb{X}_T}}{\sqrt{\tau}}d\tau \right\|_{L^\infty(Q(x_0,\sqrt{R}))}^2\\
			\lesssim& \left((1+t)\sum_{l=0}^3\left(\| (Q^1,u^1)\|_{\mathbb{E}_T}^l+ \| (Q^2,u^2)\|_{\mathbb{E}_T}^l\right)\|(\bar{Q},\bar{u})\|_{\mathbb{E}_T}\right)^2 \\
			&+ \sup_{x_0\in\R^3,0<R\leq T} \left( \sup_{x\in B(x_0,\sqrt{R})} R^{-3/2} \int_0^{R} \int_{B(x,\sqrt{R})}|\bar{u}|^2+ |\nabla \bar{Q}|^2dyd\tau \right) \\
			&\times\sup_{x_0\in\R^3,0<R\leq T} \left( \sup_{x\in B(x_0,\sqrt{R})} R^{-3/2} \int_0^{R} \int_{B(x,\sqrt{R})}|u^1|^2+|u^2|^2+ |\nabla Q^1|^2 + |\nabla Q^2|^2 dyd\tau \right)\\
			\lesssim & \left((1+t)\sum_{l=0}^3\left(\| (Q^1,u^1)\|_{\mathbb{E}_T}^l+ \| (Q^2,u^2)\|_{\mathbb{E}_T}^l\right)\|(\bar{Q},\bar{u})\|_{\mathbb{E}_T}\right)^2.
		\end{align*}
\noindent(ii) When $(\sqrt{R}/2)^2\leq t\leq R$, we have
		\begin{align*}
			&\sup_{x_0\in\R^3,0<R\leq T} R \left\|\nabla\int_0^\frac{t}{2} e^{(t-\tau)\Delta} (1-\chi) \mathbb{N}_2(\bar{Q},\bar{u})d\tau\right\|_{L^\infty({Q(x_0,\sqrt{R})})}^2\\
			\lesssim& \sup_{x_0\in\R^3,0<R\leq T} R \left\| \int_0^\frac{t}{2}\int_{\R^3} \dfrac{\left(|u^1|+ |u^2| \right) |\bar{u}|+ \left(|\nabla Q^1|+ |\nabla Q^2| \right) |\nabla\bar{Q}|}{(\sqrt{t-\tau}+|x-y|)^4} dy d\tau\right\|_{L^\infty({Q(x_0,\sqrt{R})})}^2\\
			&+ \sup_{x_0\in\R^3,0<R\leq T} R \left\| \int_0^\frac{t}{2}\int_{\R^3} \dfrac{1}{(\sqrt{t-\tau}+|x-y|)^4} \left( \sum_{l=0}^{3}\left(|Q^1|^l + |Q^2|^l\right)|\bar{Q}| \right)dy d\tau\right\|_{L^\infty({Q(x_0,\sqrt{R})})}^2\\
			& + \sup_{x_0\in\R^3,0<R\leq T} R \left\| \int_0^\frac{t}{2}\int_{\R^3} \dfrac{1}{(\sqrt{t-\tau}+|x-y|)^5} \left(|\nabla\bar{Q}|| Q^1|+ |\nabla Q^2|| \bar{Q}|\right) dy d\tau\right\|_{L^\infty({Q(x_0,\sqrt{R})})}^2\\
			\lesssim& \sup_{x_0\in\R^3,R\leq T} \left\| t^{-\frac{3}{2}} \int_0^{\frac{t}{2}}\sum_{m=0}^{+\infty}\int_{m\leq \frac{|x-y|}{\sqrt{t}}\leq m+1}\dfrac{\left(|u^1|+ |u^2| \right) |\bar{u}|+ \left(|\nabla Q^1|+ |\nabla Q^2| \right) |\nabla\bar{Q}|}{(1+m)^4} dyd\tau \right\|_{L^\infty(Q(x_0,\sqrt{R}))}^2\\
			&+ \sup_{x_0\in\R^3,0<R\leq T} \left\| t^{-3/2} \int_0^{\frac{t}{2}}\sum_{m=0}^{+\infty}\int_{m\leq \frac{|x-y|}{\sqrt{t}}\leq m+1}\dfrac{1}{(1+m)^4} \left( \sum_{l=0}^{3}\left(|Q^1|^l + |Q^2|^l\right)|\bar{Q}| \right)dyd\tau \right\|_{L^\infty(Q(x_0,\sqrt{R}))}^2\\
			& + \sup_{x_0\in\R^3,0<R\leq T} \left(  \int_0^{\frac{t}{2}} \dfrac{\sqrt{R}}{\sqrt{t-\tau}}\cdot\dfrac{1}{\sqrt{t-\tau}\sqrt{\tau}}d\tau \left(\| Q^1\|_{\mathbb{X}_T}+ \| Q^2\|_{\mathbb{X}_T}\right)\|\bar{Q} \|_{\mathbb{X}_T} \right)^2\\
			\lesssim & \left((1+t)\sum_{l=0}^3\left(\| (Q^1,u^1)\|_{\mathbb{E}_T}^l+ \| (Q^2,u^2)\|_{\mathbb{E}_T}^l\right)\|(\bar{Q},\bar{u})\|_{\mathbb{E}_T}\right)^2
		\end{align*}
	and
		\begin{align*}
			&\sup_{x_0\in\R^3,0<R\leq T} R \left\|\nabla\int_\frac{t}{2}^t e^{(t-\tau)\Delta} (1-\chi) \mathbb{N}_2(\bar{Q},\bar{u})d\tau\right\|_{L^\infty({Q(x_0,\sqrt{R})})}^2\\
			\lesssim& \sup_{x_0\in\R^3,0<R\leq T} R \left\| \int_\frac{t}{2}^t\int_{\R^3} \dfrac{\left(|u^1|+ |u^2| \right) |\bar{u}|+ \left(|\nabla Q^1|+ |\nabla Q^2| \right) |\nabla\bar{Q}|}{(\sqrt{t-\tau}+|x-y|)^4} dy d\tau\right\|_{L^\infty({Q(x_0,\sqrt{R})})}^2\\
			&+ \sup_{x_0\in\R^3,0<R\leq T} R \left\| \int_\frac{t}{2}^t\int_{\R^3} \dfrac{1}{(\sqrt{t-\tau}+|x-y|)^4} \left( \sum_{l=0}^{3}\left(|Q^1|^l + |Q^2|^l\right)|\bar{Q}| \right)dy d\tau\right\|_{L^\infty({Q(x_0,\sqrt{R})})}^2\\
			& + \sup_{x_0\in\R^3,0<R\leq T} R \left\| \int_\frac{t}{2}^t\int_{\R^3} \dfrac{1}{(\sqrt{t-\tau}+|x-y|)^5} \left( |\nabla\bar{Q}|| Q^1|+ |\nabla Q^2|| \bar{Q}| \right) dy d\tau\right\|_{L^\infty({Q(x_0,\sqrt{R})})}^2\\
			\lesssim& \sup_{x_0\in\R^3,0<R\leq T} \left(  \int_{\frac{t}{2}}^t \dfrac{\sqrt{R}}{\sqrt{\tau}}\cdot\dfrac{1}{\sqrt{t-\tau}\sqrt{\tau}}d\tau\left(\| (Q^1,u^1)\|_{\mathbb{E}_T}+ \| (Q^2,u^2)\|_{\mathbb{E}_T}\right)\|(\bar{Q},\bar{u})\|_{\mathbb{E}_T} \right)^2\\
			& + \sup_{x_0\in\R^3,0<R\leq T} \left(  \int_{\frac{t}{2}}^t \dfrac{1}{\sqrt{t-\tau}^{3/2}}\cdot\dfrac{\sqrt{R}}{\sqrt{\tau}}\cdot\dfrac{\left(\| Q^1\|_{\mathbb{X}_T}+ \| Q^2\|_{\mathbb{X}_T}\right)\|\bar{Q}\|_{\mathbb{X}_T} }{\tau^{1/4}}d\tau \right)^2\\
			&+ \left(   t\sum_{l=0}^3\left(\| Q^1\|_{\mathbb{X}_T}^l+ \| Q^2\|_{\mathbb{X}_T}^l\right)\|\bar{Q}\|_{\mathbb{X}_T} \right)^2\\ 
			\lesssim & \left((1+t)\sum_{l=0}^3\left(\| (Q^1,u^1)\|_{\mathbb{E}_T}^l+ \| (Q^2,u^2)\|_{\mathbb{E}_T}^l\right)\|(\bar{Q},\bar{u})\|_{\mathbb{E}_T}\right)^2.
		\end{align*}
%	Hence
%	\begin{align}
%		J_{42}\lesssim (1+t)\sum_{l=0}^3\left(\| (Q^1,u^1)\|_{X_T}^l+ \| (Q^2,u^2)\|_{X_T}^l\right)\|(\bar{Q},\bar{u})\|_{X_T}
%	\end{align}
Substituting all these estimates into \eqref{eq3.35}, we then infer that
	\begin{equation}\label{eq3.36}
			\|\mathbb{S}_2(\bar{Q},\bar{u})\|_{\mathbb{Y}_T}
			\lesssim (1+T)\sum_{l=0}^3\left(\| (Q^1,u^1)\|_{\mathbb{E}_T}^l+ \| (Q^2,u^2)\|_{\mathbb{E}_T}^l\right)\|(\bar{Q},\bar{u})\|_{\mathbb{E}_T}.
	\end{equation}
	Next, we deal with $\|\mathbb{S}_1(\bar{Q},\bar{u})\|_{\mathbb{X}_T}$, which is roughly equivalent to estimate
	\begin{align}\label{eq3.37}
		\|\mathbb{S}_1(\bar{Q},\bar{u})\|_{\mathbb{X}_T}\lesssim&\sum_{k=0}^1t^{\frac{k}{2}}\|\nabla^k\mathbb{S}_1(\bar{Q},\bar{u})\|_{L^\infty}+ \sup_{x_0\in\R^3,0<R\leq T}\left(R^{-3/2}\int_0^R\int_{B(x_0,\sqrt{R})}|\nabla \mathbb{S}_1(\bar{Q},\bar{u})(y,t)|^2 dy dt\right)^{\frac{1}{2}}\nonumber\\
		\lesssim& \underbrace{\left\|\int_0^\frac{t}{2} e^{(t-\tau)\Delta}\mathbb{N}_1(\bar{Q},\bar{u})d\tau\right\|_{L^\infty}}_{\mathcal{J}_1} + \underbrace{\left\|\int_\frac{t}{2}^t  e^{(t-\tau)\Delta}\mathbb{N}_1(\bar{Q},\bar{u})d\tau\right\|_{L^\infty}}_{\mathcal{J}_2}\nonumber\\
		&+ \underbrace{t^{\frac{1}{2}}\left\|\nabla\int_0^\frac{t}{2}  e^{(t-\tau)\Delta}\mathbb{N}_1(\bar{Q},\bar{u})d\tau\right\|_{L^\infty}}_{\mathcal{J}_3} + \underbrace{t^{\frac{1}{2}}\left\|\nabla\int_\frac{t}{2}^t  e^{(t-\tau)\Delta}\mathbb{N}_1(\bar{Q},\bar{u})d\tau\right\|_{L^\infty}}_{\mathcal{J}_4}\nonumber\\
		& +\sup_{x_0\in\R^3,R\leq T}\Bigg( \underbrace{ R^{-\frac{3}{2}}\int_0^R \int_{\R^3}| \chi \mathbb{N}_1(\bar{Q},\bar{u})| dyd\tau }_{\mathcal{J}_5}\cdot\Bigg(\underbrace{\left\| \int_0^{\frac{t}{2}} e^{(t-\tau)\Delta}\chi \mathbb{N}_1(\bar{Q},\bar{u})  d\tau\right\|_{L^\infty(Q(x_0,5\sqrt{R}))}}_{\mathcal{J}_6} \nonumber\\
		& + \underbrace{\left\| \int_{\frac{t}{2}}^t e^{(t-\tau)\Delta}\chi \mathbb{N}_1(\bar{Q},\bar{u}) d\tau \right\|_{L^\infty(Q(x_0,5\sqrt{R}))}}_{\mathcal{J}_7}\Bigg)\Bigg)^{\frac{1}{2}}\\
		& + \left( \underbrace{\sup_{x_0\in\R^3,0<R\leq T} R \left\|\nabla\int_0^t e^{(t-\tau)\Delta} (1-\chi) \mathbb{N}_1(\bar{Q},\bar{u})d\tau\right\|_{L^\infty({Q(x_0,\sqrt{R})})}^2}_{\mathcal{J}_8} \right)^{\frac{1}{2}}.\nonumber 
	\end{align}
	Indeed, one have
	\begin{align*}
		\mathcal{J}_{1}\lesssim& \left\|  \int_0^{\frac{t}{2}}\int_{\R^3}\dfrac{e^{-\frac{(x-y)^2}{4(t-\tau)}}}{\sqrt{t-\tau}^3}\left(  |\bar{u}||\nabla Q^1|+  |u^2||\nabla\bar{Q}| +\sum_{l=0}^{2}\left(|Q^1|^l + |Q^2|^l\right)|\bar{Q}| \right)dyd\tau \right\|_{L^\infty}\\
		& + \left\|  \int_0^{\frac{t}{2}} \nabla e^{(t-\tau)\Delta} \left(|Q^1||\bar{u}|+ |\bar{Q}||u^2| \right)d\tau \right\|_{L^\infty}\\
		\lesssim &t\sum_{l=0}^2\left(\| Q^1\|_{\mathbb{X}_T}^l+ \| Q^2\|_{\mathbb{X}_T}^l\right)\|\bar{Q}\|_{\mathbb{X}_T} + \left\| \int_0^{\frac{t}{2}} \dfrac{d\tau}{\sqrt{t-\tau}\sqrt{\tau}} \left(\| Q^1\|_{\mathbb{X}_T}+ \| u^2\|_{\mathbb{Y}_T}\right)\|(\bar{Q},\bar{u})\|_{\mathbb{E}_T} \right\|_{L^\infty}\\
		& +\left\| \dfrac{1}{\sqrt{t}^3} \sum_{m=0}^{+\infty}\int_0^{\frac{t}{2}}\int_{m\leq \frac{|x-y|}{\sqrt{t-\tau}}\leq m+1}e^{-m^2}\left( |\bar{u}||\nabla Q^1|+  |u^2||\nabla\bar{Q}|\right)dyd\tau \right\|_{L^\infty}\\
		\lesssim & \sup_{x\in\R^3,0<t\leq T} \left(t^{-\frac{3}{2}}\int_0^{\frac{t}{2}}\int_{B(x,\sqrt{t})} |u^2|^2+ |\nabla Q^1|^2dyd\tau\right)^{\frac{1}{2}} \times \left(t^{-\frac{3}{2}}\int_0^{\frac{t}{2}}\int_{B(x,\sqrt{t})} |\bar{u}|^2+ |\nabla \bar{Q}|^2dyd\tau\right)^{\frac{1}{2}}\\
		& +(1+t)\sum_{l=0}^2\left(\| Q^1\|_{\mathbb{X}_T}^l+ \| (Q^2,u^2)\|_{\mathbb{E}_T}^l\right)\|(\bar{Q},\bar{u})\|_{\mathbb{E}_T} \\
		\lesssim &(1+t)\sum_{l=0}^2\left(\| (Q^1,u^1)\|_{\mathbb{E}_T}^l+ \| (Q^2,u^2)\|_{\mathbb{E}_T}^l\right)\|(\bar{Q},\bar{u})\|_{\mathbb{E}_T},\\
		\mathcal{J}_{2}\lesssim& t\sum_{l=0}^2\left(\| Q^1\|_{\mathbb{X}_T}^l+ \| Q^2\|_{\mathbb{X}_T}^l\right)\|\bar{Q}\|_{\mathbb{X}_T}+ \left\|  \int_{\frac{t}{2}}^t\dfrac{d\tau}{\sqrt{t-\tau}\sqrt{\tau}}\left(\| Q^1\|_{\mathbb{X}_T}+\|u^2\|_{\mathbb{Y}_T}\right)\|\left(\bar{Q},\bar{u}\right)\|_{\mathbb{E}_T} \right\|_{L^\infty} \\
		&+ \left\|  \int_{\frac{t}{2}}^t \dfrac{\left(\| Q^1\|_{\mathbb{X}_T}+\|u^2\|_{\mathbb{Y}_T}\right)\|\left(\bar{Q},\bar{u}\right)\|_{\mathbb{E}_T}}{\tau}d\tau \right\|_{L^\infty} \\
		\lesssim& (1+t)\sum_{l=0}^2\left(\| (Q^1,u^1)\|_{\mathbb{E}_T}^l+ \| (Q^2,u^2)\|_{\mathbb{E}_T}^l\right)\|(\bar{Q},\bar{u})\|_{\mathbb{E}_T},\\
		\mathcal{J}_{3}\lesssim&  t^{\frac{1}{2}}\left\|  \int_0^{\frac{t}{2}}\int_{\R^3}\dfrac{1}{\left(\sqrt{t-\tau} +|x-y| \right)^{4}}\left(  |\bar{u}||\nabla Q^1|+  |u^2||\nabla\bar{Q}| +\sum_{l=0}^{2}\left(|Q^1|^l + |Q^2|^l\right)|\bar{Q}| \right)dyd\tau \right\|_{L^\infty}\\
		&+t^{\frac{1}{2}}\left\|  \int_0^{\frac{t}{2}}\left|\int_{\R^3}\dfrac{1}{\left(\sqrt{t-\tau} +|x-y| \right)^{5}}dy\right| \left(|Q^1||\bar{u}|+ |\bar{Q}||u^2|\right) d\tau \right\|_{L^\infty}\\
		\lesssim&  t\sum_{l=0}^2\left(\| Q^1\|_{\mathbb{X}_T}^l+ \| Q^2\|_{\mathbb{X}_T}^l\right)\|\bar{Q}\|_{\mathbb{X}_T}+  t^{\frac{1}{2}}\left\|  \int_0^{\frac{t}{2}}\dfrac{1}{t-\tau}\dfrac{\left(\| Q^1\|_{\mathbb{X}_T}+\|u^2\|_{\mathbb{Y}_T}\right)\|\left(\bar{Q},\bar{u}\right)\|_{\mathbb{E}_T}}{\sqrt{\tau}}d\tau \right\|_{L^\infty}\\
		&+ \left\| t^{-\frac{3}{2}}\sum_{m=0}^{+\infty} \int_0^{\frac{t}{2}}\int_{m\leq \frac{|x-y|}{\sqrt{t-\tau}}\leq m+1}\dfrac{|\bar{u}||\nabla Q^1|+  |u^2||\nabla\bar{Q}|}{(1 +m )^{4}} dyd\tau \right\|_{L^\infty}\\
		\lesssim & \sup_{x\in\R^3,0<t\leq T} \left(t^{-\frac{3}{2}}\int_0^{\frac{t}{2}}\int_{B(x,\sqrt{t})} |u^2|^2+ |\nabla Q^1|^2dyd\tau\right)^{\frac{1}{2}} \times \left(t^{-\frac{3}{2}}\int_0^{\frac{t}{2}}\int_{B(x,\sqrt{t})} |\bar{u}|^2+ |\nabla \bar{Q}|^2dyd\tau\right)^{\frac{1}{2}}\\
		& + (1+t)\sum_{l=0}^2\left(\| Q^1\|_{\mathbb{X}_T}^l+ \| (Q^2,u^2)\|_{\mathbb{E}_T}^l\right)\|(\bar{Q},\bar{u})\|_{\mathbb{E}_T}\\
		\lesssim&  (1+t)\sum_{l=0}^2\left(\| (Q^1,u^1)\|_{\mathbb{E}_T}^l+ \| (Q^2,u^2)\|_{\mathbb{E}_T}^l\right)\|(\bar{Q},\bar{u})\|_{\mathbb{E}_T},\\
		\mathcal{J}_{4} 
		\lesssim& t\sum_{l=0}^2\left(\| Q^1\|_{\mathbb{X}_T}^l+ \| Q^2\|_{\mathbb{X}_T}^l\right)\|\bar{Q}\|_{\mathbb{X}_T}+ t^{\frac{1}{2}}\left\|  \int_{\frac{t}{2}}^t\dfrac{d\tau}{\sqrt{t-\tau}^{3/2}}\left(\| Q^1\|_{\dot{C}^{1/2}}\|\bar{u}\|_{L^\infty}+ \| \bar{Q}\|_{\dot{C}^{1/2}}\|u^2\|_{L^\infty}\right) \right\|_{L^\infty} \\
		&+ t^{\frac{1}{2}}\left\|  \int_{\frac{t}{2}}^t \dfrac{1}{\sqrt{t-\tau}}\dfrac{\left(\| Q^1\|_{\mathbb{X}_T}+\|u^2\|_{\mathbb{Y}_T}\right)\|\left(\bar{Q},\bar{u}\right)\|_{\mathbb{E}_T}}{\tau}d\tau \right\|_{L^\infty} \\
		\lesssim&  (1+t)\sum_{l=0}^2\left(\| Q^1\|_{\mathbb{X}_T}^l+ \| (Q^2,u^2)\|_{\mathbb{E}_T}^l\right)\|(\bar{Q},\bar{u})\|_{\mathbb{E}_T} + t^{\frac{1}{2}}\left\|  \int_{\frac{t}{2}}^t\dfrac{\left(\|Q^1\|_{\mathbb{X}_T}+\|u^2\|_{\mathbb{Y}_T}\right)\|\left(\bar{Q},\bar{u}\right)\|_{\mathbb{E}_T}}{\sqrt{t-\tau}^{3/2}\cdot \tau^{3/4}} d\tau \right\|_{L^\infty}\\
		\lesssim& (1+t)\sum_{l=0}^2\left(\| (Q^1,u^1)\|_{\mathbb{E}_T}^l+ \| (Q^2,u^2)\|_{\mathbb{E}_T}^l\right)\|(\bar{Q},\bar{u})\|_{\mathbb{E}_T},\\
		\mathcal{J}_5
		\lesssim &R^{-3/2}\int_0^R \int_{B(x_0,5\sqrt{R})}\left( |\bar{u}||\nabla Q^1|+  |u^2||\nabla\bar{Q}| +\sum_{l=0}^{2}\left(|Q^1|^l + |Q^2|^l\right)|\bar{Q}| \right) dyd\tau\\
		& + R^{-3/2} \int_0^R  \int_{B(x_0,5\sqrt{R})}\frac{1}{\sqrt{R}}\nabla_y \chi\left(\frac{y}{\sqrt{R}}\right) \left(|Q^1||\bar{u}|+ |\bar{Q}||u^2|\right) dyd\tau\\
		\lesssim& \left(R^{-\frac{3}{2}}\int_0^{R}\int_{B(x_0,5\sqrt{R})} |u^2|^2+ |\nabla Q^1|^2dyd\tau\right)^{\frac{1}{2}}\cdot \left(R^{-\frac{3}{2}}\int_0^{R}\int_{B(x_0,5\sqrt{R})} |\bar{u}|^2+ |\nabla \bar{Q}|^2dyd\tau\right)^{\frac{1}{2}}\\
		& + R\sum_{l=0}^2\left(\| Q^1\|_{\mathbb{X}_T}^l+ \| Q^2\|_{\mathbb{X}_T}^l\right)\|\bar{Q}\|_{\mathbb{X}_T} +  R^{-2}\int_0^R\int_{B(x_0,5\sqrt{R})}\frac{1}{\sqrt{\tau}}dyd\tau\left(\| Q^1\|_{\mathbb{X}_T}+ \| u^2\|_{\mathbb{Y}_T}\right)\|(\bar{Q},\bar{u})\|_{\mathbb{E}_T}\\
		\lesssim& (1+T)\sum_{l=0}^2\left(\| (Q^1,u^1)\|_{\mathbb{E}_T}^l+ \| (Q^2,u^2)\|_{\mathbb{E}_T}^l\right)\|(\bar{Q},\bar{u})\|_{\mathbb{E}_T},\\
		\mathcal{J}_6
		\lesssim& \left\| \int_0^{\frac{t}{2}} e^{(t-\tau)\Delta}  \left( |\bar{u}||\nabla Q^1|+  |u^2||\nabla\bar{Q}| +\sum_{l=0}^{2}\left(|Q^1|^l + |Q^2|^l\right)|\bar{Q}| \right)  d\tau \right\|_{L^\infty(Q(x_0,5\sqrt{R}))}\\
		& + \left\| \int_0^{\frac{t}{2}} \nabla e^{(t-\tau)\Delta}  \left(|Q^1||\bar{u}|+ |\bar{Q}||u^2|\right)  d\tau \right\|_{L^\infty(Q(x_0,5\sqrt{R}))}\\
		\lesssim& \sup_{x\in\R^3,0<t\leq T} \left(t^{-\frac{3}{2}}\int_0^{\frac{t}{2}}\int_{B(x,\sqrt{t})} |u^2|^2+ |\nabla Q^1|^2dyd\tau\right)^{\frac{1}{2}}\cdot \left(t^{-\frac{3}{2}}\int_0^{\frac{t}{2}}\int_{B(x,\sqrt{t})} |\bar{u}|^2+ |\nabla \bar{Q}|^2dyd\tau\right)^{\frac{1}{2}}\\
		& + t\sum_{l=0}^2\left(\| Q^1\|_{\mathbb{X}_T}^l+ \| Q^2\|_{\mathbb{X}_T}^l\right)\|\bar{Q}\|_{\mathbb{X}_T} + \left\| \int_0^{\frac{t}{2}} \dfrac{d\tau}{\sqrt{t-\tau}\sqrt{\tau}} \left(\| Q^1\|_{\mathbb{X}_T}+ \| u^2\|_{\mathbb{Y}_T}\right)\|(\bar{Q},\bar{u})\|_{\mathbb{E}_T} \right\|_{L^\infty}\\
		\lesssim &(1+t)\sum_{l=0}^2\left(\| (Q^1,u^1)\|_{\mathbb{E}_T}^l+ \| (Q^2,u^2)\|_{\mathbb{E}_T}^l\right)\|(\bar{Q},\bar{u})\|_{\mathbb{E}_T},\\
		\mathcal{J}_7
		\lesssim & t\sum_{l=0}^2\left(\| Q^1\|_{\mathbb{X}_T}^l+ \| Q^2\|_{\mathbb{X}_T}^l\right)\|\bar{Q}\|_{\mathbb{X}_T}+ \left\|  \int_{\frac{t}{2}}^t\dfrac{d\tau}{\sqrt{t-\tau}\sqrt{\tau}}\left(\|  Q^1\|_{\mathbb{X}_T}+\|u^2\|_{\mathbb{Y}_T}\right)\|\left(\bar{Q},\bar{u}\right)\|_{\mathbb{E}_T} \right\|_{L^\infty} \\
		&+ \left\|  \int_{\frac{t}{2}}^t \dfrac{\left(\| Q^1\|_{\mathbb{X}_T}+\|u^2\|_{\mathbb{Y}_T}\right)\|\left(\bar{Q},\bar{u}\right)\|_{\mathbb{E}_T}}{\tau}d\tau \right\|_{L^\infty} \\
		\lesssim& (1+t)\sum_{l=0}^2\left(\| (Q^1,u^1)\|_{\mathbb{E}_T}^l+ \| (Q^2,u^2)\|_{\mathbb{E}_T}^l\right)\|(\bar{Q},\bar{u})\|_{\mathbb{E}_T}.
	\end{align*}
	For the remainder term $\mathcal{J}_8$, we still estimate it in two cases:
	
	\noindent(i). When $0\leq \tau\leq t\leq (\sqrt{R}/2)^2$, we have
		\begin{align*}
			\mathcal{J}_8
			\lesssim& \sup_{x_0\in\R^3,0<R\leq T} R \left\| \int_0^t\int_{\R^3} \dfrac{1-\chi(y/\sqrt{R})}{(\sqrt{t-\tau}+|x-y|)^4} \left( |\bar{u}||\nabla Q^1|+  |u^2||\nabla\bar{Q}|  \right)dy d\tau\right\|_{L^\infty({Q(x_0,\sqrt{R})})}^2\\
			&+ \sup_{x_0\in\R^3,0<R\leq T} R \left\| \int_0^t\int_{\R^3} \dfrac{1-\chi(y/\sqrt{R})}{(\sqrt{t-\tau}+|x-y|)^4} \left( \sum_{l=0}^{2}\left(|Q^1|^l + |Q^2|^l\right)|\bar{Q}| \right)dy d\tau\right\|_{L^\infty({Q(x_0,\sqrt{R})})}^2\\
			& + \sup_{x_0\in\R^3,0<R\leq T} R \left\| \int_0^t\int_{\R^3} \dfrac{1}{(\sqrt{t-\tau}+|x-y|)^5} \left(|Q^1||\bar{u}|+ |\bar{Q}||u^2|\right) dy d\tau\right\|_{L^\infty({Q(x_0,\sqrt{R})})}^2\\
			\lesssim& \sup_{x_0\in\R^3,0<R\leq T} R \left\| \int_0^{t}\sum_{m=1}^{+\infty}\int_{m\leq \frac{|x-y|}{\sqrt{R}}\leq (m+1)}\dfrac{1}{(m\sqrt{R})^4} \left( |\bar{u}||\nabla Q^1|+  |u^2||\nabla\bar{Q}| \right)dy d\tau\right\|_{L^\infty({Q(x_0,\sqrt{R})})}^2\\
			&+ \sup_{x_0\in\R^3,0<R\leq T} R \left\| \int_0^{t}\sum_{m=1}^{+\infty}\int_{m \leq \frac{|x-y|}{\sqrt{R}}\leq (m+1)}\dfrac{1}{(m\sqrt{R})^4} \left( \sum_{l=0}^{2}\left(|Q^1|^l + |Q^2|^l\right)|\bar{Q}| \right)dy d\tau\right\|_{L^\infty({Q(x_0,\sqrt{R})})}^2\\
			& + \sup_{x_0\in\R^3,0<R\leq T} \left\|  \int_0^{t} \dfrac{1}{\sqrt{t-\tau}}\cdot\dfrac{\left(\| Q^1\|_{\mathbb{X}_T}+ \| u^2\|_{\mathbb{Y}_T}\right)\|(\bar{Q},\bar{u})\|_{\mathbb{E}_T}}{\sqrt{\tau}}d\tau \right\|_{L^\infty(Q(x_0,\sqrt{R}))}^2\\
			\lesssim& \left((1+t)\sum_{l=0}^2\left(\| Q^1\|_{\mathcal{X}_T}^l+ \| (Q^2,u^2)\|_{\mathbb{E}_T}^l\right)\|(\bar{Q},\bar{u})\|_{\mathbb{E}_T}\right)^2 \\
			&+ \sup_{x_0\in\R^3,0<R\leq T} \left( \sup_{x\in B(x_0,\sqrt{R})} R^{-3/2} \int_0^{R} \int_{B(x,\sqrt{R})}|\bar{u}|^2+ |\nabla \bar{Q}|^2dyd\tau \right) \\
			&\times\sup_{x_0\in\R^3,0<R\leq T} \left( \sup_{x\in B(x_0,\sqrt{R})} R^{-3/2} \int_0^{R} \int_{B(x,\sqrt{R})}|u^2|^2+ |\nabla Q^1|^2 dyd\tau \right)\\
			\lesssim & \left((1+t)\sum_{l=0}^2\left(\| (Q^1,u^1)\|_{\mathbb{E}_T}^l+ \| (Q^2,u^2)\|_{\mathbb{E}_T}^l\right)\|(\bar{Q},\bar{u})\|_{\mathbb{E}_T}\right)^2.
		\end{align*}
		\noindent(ii). When $(\sqrt{R}/2)^2\leq t\leq R$, we have
		\begin{align*}
			&\sup_{x_0\in\R^3,0<R\leq T} R \left\|\nabla\int_0^\frac{t}{2} e^{(t-\tau)\Delta} (1-\chi) \mathbb{N}_1(\bar{Q},\bar{u})d\tau\right\|_{L^\infty({Q(x_0,\sqrt{R})})}^2\\
			\lesssim& \sup_{x_0\in\R^3,0<R\leq T} R \left\| \int_0^\frac{t}{2}\int_{\R^3} \dfrac{1}{(\sqrt{t-\tau}+|x-y|)^4} \left( |\bar{u}||\nabla Q^1|+  |u^2||\nabla\bar{Q}|  \right)dy d\tau\right\|_{L^\infty({Q(x_0,\sqrt{R})})}^2\\
			&+ \sup_{x_0\in\R^3,0<R\leq T} R \left\| \int_0^\frac{t}{2}\int_{\R^3} \dfrac{1}{(\sqrt{t-\tau}+|x-y|)^4} \left( \sum_{l=0}^{2}\left(|Q^1|^l + |Q^2|^l\right)|\bar{Q}| \right)dy d\tau\right\|_{L^\infty({Q(x_0,\sqrt{R})})}^2\\
			& + \sup_{x_0\in\R^3,0<R\leq T} R \left\| \int_0^\frac{t}{2}\int_{\R^3} \dfrac{1}{(\sqrt{t-\tau}+|x-y|)^5} \left(|Q^1||\bar{u}|+ |\bar{Q}||u^2|\right) dy d\tau\right\|_{L^\infty({Q(x_0,\sqrt{R})})}^2\\
			\lesssim& \sup_{x_0\in\R^3,0<R\leq T} \left\| t^{-3/2} \int_0^{\frac{t}{2}}\sum_{m=0}^{+\infty}\int_{m\leq \frac{|x-y|}{\sqrt{t}}\leq m+1}\dfrac{1}{(1+m)^4}\left( |\bar{u}||\nabla Q^1|+  |u^2||\nabla\bar{Q}|  \right)dyd\tau \right\|_{L^\infty(Q(x_0,\sqrt{R}))}^2\\
			&+ \sup_{x_0\in\R^3,0<R\leq T} \left\| t^{-3/2} \int_0^{\frac{t}{2}}\sum_{m=0}^{+\infty}\int_{m\leq \frac{|x-y|}{\sqrt{t}}\leq m+1}\dfrac{1}{(1+m)^4} \left( \sum_{l=0}^{2}\left(|Q^1|^l + |Q^2|^l\right)|\bar{Q}| \right)dyd\tau \right\|_{L^\infty(Q(x_0,\sqrt{R}))}^2\\
			& + \sup_{x_0\in\R^3,0<R\leq T} \left(  \int_0^{\frac{t}{2}} \dfrac{\sqrt{R}}{\sqrt{t-\tau}}\cdot\dfrac{1}{\sqrt{t-\tau}\sqrt{\tau}}d\tau \left(\| Q^1\|_{\mathbb{X}_T}+ \| u^2\|_{\mathbb{Y}_T}\right)\|(\bar{Q},\bar{u})\|_{\mathbb{E}_T} \right)^2\\
			\lesssim & \left((1+t)\sum_{l=0}^2\left(\| (Q^1,u^1)\|_{\mathbb{E}_T}^l+ \| (Q^2,u^2)\|_{\mathbb{E}_T}^l\right)\|(\bar{Q},\bar{u})\|_{\mathbb{E}_T}\right)^2.
		\end{align*}
		and
		\begin{align*}
			&\sup_{x_0\in\R^3,0<R\leq T} R \left\|\nabla\int_\frac{t}{2}^t e^{(t-\tau)\Delta} (1-\chi) \mathbb{N}_1(\bar{Q},\bar{u})d\tau\right\|_{L^\infty({Q(x_0,\sqrt{R})})}^2\\
			\lesssim& \sup_{x_0\in\R^3,0<R\leq T} R \left\| \int_\frac{t}{2}^t\int_{\R^3} \dfrac{1}{(\sqrt{t-\tau}+|x-y|)^4} \left( |\bar{u}||\nabla Q^1|+  |u^2||\nabla\bar{Q}| \right)dy d\tau\right\|_{L^\infty({Q(x_0,\sqrt{R})})}^2\\
			&+ \sup_{x_0\in\R^3,0<R\leq T} R \left\| \int_\frac{t}{2}^t\int_{\R^3} \dfrac{1}{(\sqrt{t-\tau}+|x-y|)^4} \left( \sum_{l=0}^{2}\left(|Q^1|^l + |Q^2|^l\right)|\bar{Q}| \right)dy d\tau\right\|_{L^\infty({Q(x_0,\sqrt{R})})}^2\\
			& + \sup_{x_0\in\R^3,0<R\leq T} R \left\| \int_\frac{t}{2}^t\int_{\R^3} \dfrac{1}{(\sqrt{t-\tau}+|x-y|)^5} \left(|Q^1||\bar{u}|+ |\bar{Q}||u^2|\right) dy d\tau\right\|_{L^\infty({Q(x_0,\sqrt{R})})}^2\\
			\lesssim& \sup_{x_0\in\R^3,0<R\leq T} \left(  \int_{\frac{t}{2}}^t \dfrac{\sqrt{R}}{\sqrt{\tau}}\cdot\dfrac{1}{\sqrt{t-\tau}\sqrt{\tau}}d\tau\left(\| Q^1\|_{\mathbb{X}_T}+ \| u^2\|_{\mathbb{Y}_T}\right)\|(\bar{Q},\bar{u})\|_{\mathbb{E}_T} \right)^2\\
			& + \sup_{x_0\in\R^3,\sqrt{R}>0} \left(  \int_{\frac{t}{2}}^t \dfrac{1}{\sqrt{t-\tau}^{3/2}}\cdot\dfrac{\sqrt{R}}{\tau^{3/4}}d\tau \left(\| Q^1\|_{\mathbb{X}_T}+ \| u^2\|_{\mathbb{Y}_T}\right)\|(\bar{Q},\bar{u})\|_{\mathbb{E}_T} \right)^2\\
			&+ \left(   t\sum_{l=0}^2\left(\| Q^1\|_{\mathbb{X}_T}^l+ \|Q^2\|_{\mathbb{X}_T}^l\right)\|\bar{Q}\|_{\mathbb{X}_T} \right)^2 \\
			\lesssim & \left((1+t)\sum_{l=0}^2\left(\| (Q^1,u^1)\|_{\mathbb{E}_T}^l+ \| (Q^2,u^2)\|_{\mathbb{E}_T}^l\right)\|(\bar{Q},\bar{u})\|_{\mathbb{E}_T}\right)^2.
		\end{align*}
	We then conclude that
	\begin{equation}\label{eq3.38}
		\|\mathbb{S}_1(\bar{Q},\bar{u})\|_{\mathbb{X}_T}
			\lesssim (1+T)\sum_{l=0}^2\left(\| (Q^1,u^1)\|_{\mathbb{E}_T}^l+ \| (Q^2,u^2)\|_{\mathbb{E}_T}^l\right)\|(\bar{Q},\bar{u})\|_{\mathbb{E}_T}.
	\end{equation}
	Now, from \eqref{eq3.36} and \eqref{eq3.38}, one can see that
	\begin{equation}
		\begin{split}
			\|S(Q^1,u^1) - S(Q^2,u^2)\|_{\mathbb{E}_T} \lesssim& (1+T)\sum_{l=0}^3\left(\| (Q^1,u^1)\|_{\mathbb{E}_T}^l+ \| (Q^2,u^2)\|_{\mathbb{E}_T}^l\right)\|(\bar{Q},\bar{u})\|_{\mathbb{E}_T}\\
			\leq & C \epsilon_1 (1+T) \|(Q^1,u^1) - (Q^2,u^2)\|_{\mathbb{E}_T}\\
			\leq & \theta  \|(Q^1,u^1) - (Q^2,u^2)\|_{\mathbb{E}_T}
		\end{split}
	\end{equation}
	for some $\theta=\theta(\epsilon_1,T)\in (0,1)$, provided $\epsilon_1>0$ is chosen to be sufficiently small, where we have used
	\[
	\|Q^i\|_{\mathbb{X}_T} + \|u^i\|_{\mathbb{Y}_T}\lesssim \epsilon_1,\quad i=1,2.
	\]
	This completes the proof.
\end{proof}

%\begin{theorem}[local-wellposedness]
%	There exist $\epsilon_0>0$ such that if
%	\begin{equation}
%		\|Q_0\|_{L^\infty}+ \|u_0\|_{{\rm BMO}^{-1}}\leq \epsilon_0,
%	\end{equation}
%	then there exists $T_\ast>0$ such that system \eqref{eq-1.1} admits a unique solution $(Q,u)\in \mathbb{X}_{T_\ast}\times\mathbb{Y}_{T_\ast}$ with
%	\[
%	\|(Q,u)\|_{\mathbb{E}_{T\ast}}\leq C\epsilon_0.
%	\]
%\end{theorem}
\noindent\textbf{The proof of Theorem~\ref{thm1.1}}

%\begin{proof}
It follows directly from Lemma~\ref{lem3.1}, Lemma~\ref{lem3.2} and the Banach fixed-point theorem if $\epsilon_0$ is chosen to be sufficiently small.$\hfill\qedsymbol$

\section*{Acknowledgments}

%The authors would like to thank the referees for their constructive suggestions and comments, which help to improve the paper.
%The work of Q.~H.~Nguyen was supported by XXX.
%The work of F.~Yang
The author would like to thank Professor Quoc-Hung Nguyen for valuable discussions.
This work was supported in part by NSFC-12031012, NSFC-12250710674, NSFC-W2531006 and the Institute of Modern Analysis-A Frontier Research Center of Shanghai and the Jiangsu Innovation and Entrepreneurship Talent Program under Grant JSSCBS20250352 and the Scientific Research Foundation for Advanced Talent of Nantong University under Grant 135424639079.
%The author also appreciate Professor Quoc-Hung Nguyen for valuable discussions.
%The author Quoc-Hung Nguyen.

%

\end{document}